\newcommand{\sgn}{\mathrm{sgn}}
\newtheorem{definition}{Definition}[section]
\newtheorem{theorem}[definition]{Theorem}
\newtheorem{lemma}[definition]{Lemma}
\newtheorem{proposition}[definition]{Proposition}
\newtheorem{corollary}[definition]{Corollary}
\newtheorem{condition}{Assumption}
\numberwithin{equation}{section}
\begin{document}

\title{Large deviations for invariant measure of the stochastic Allen-Cahn equation with inhomogeneous boundary conditions and multiplicative noise}
\author[a]{Rui Bai}
	\author [a] {Chunrong Feng}
	\author[a,b]{Huaizhong Zhao}
	\affil[a]{Department of Mathematical Sciences, Durham University, DH1 3LE, UK}
	\affil[b] {Research Centre for Mathematics and Interdisciplinary Sciences, Shandong University, Qingdao 266237, China}
	
	\affil[ ]{rui.bai@durham.ac.uk, chunrong.feng@durham.ac.uk,  huaizhong.zhao@durham.ac.uk}
	\date{}
	
\maketitle

\begin{abstract}	
    \medskip
	We establish a small-noise large deviation principle for the family of invariant measures \(\{\mu_\epsilon\}_{\epsilon>0}\) associated with the one-dimensional stochastic Allen–Cahn equation, subject to inhomogeneous Dirichlet boundary conditions and driven by unbounded multiplicative noise. The main novelty is that the deterministic system is only weakly dissipative, while the noise coefficient is allowed to have strictly sublinear growth arbitrarily close to linear. Using L. Simon’s convergence theorem, we prove that every trajectory of the corresponding noiseless equation converges, as time tends to infinity, to the unique minimiser of the Ginzburg–Landau energy functional determined by the boundary conditions. A key ingredient is an exponential estimate for the invariant measures outside bounded subsets of \(W^{k^\star,p^\star}\), where \(k^\star p^\star>1\) and \(p^\star\) are sufficiently large; such subsets are compact in the underlying space of continuous functions. As a consequence of the large deviation principle, we show that, as \(\epsilon\to0\), the invariant measures \(\mu_\epsilon\) concentrate exponentially fast around the unique minimiser.
    \vskip10pt
    
    \noindent
{\bf MSC2020 subject classifications:} Primary 60H15, 60F10, 35R60; secondary 37A50, 37L55.
\vskip2pt

\noindent
	{\bf Keywords}: large deviation principle; invariant measures; stochastic Allen-Cahn equation; unbounded multiplicative noise; inhomogeneous boundary conditions.

\end{abstract}

\tableofcontents
\section{Introduction}
In this paper, we study the following equation
\begin{equation}\label{ACE1}
     \begin{cases}
         \partial_t u_\epsilon(t,\xi) = \Delta u_\epsilon(t,\xi)  + u_\epsilon(t,\xi) -u_\epsilon(t,\xi)^3 + \epsilon^{\frac{1}{2}} g(\xi, u_\epsilon(t,\xi))\partial_t W(t,\xi),  \xi \in (-L,L),\\
         u_\epsilon(0,\xi)=x(\xi),\\
         u_\epsilon(t,\xi) = \sgn(\xi) , \, |\xi| = L, \; t\geq 0,    
         \end{cases}
 \end{equation}
where $W(t,\xi):= \sum_{k \in \mathbb{N}}e_k(\xi)\beta_k(t)$, $\{e_k\}$ is a complete orthogonal basis in $H:=L^2(-L,L)$, and $\{\beta_k(t)\}$ is a sequence of independent Brownian motions. Assume that the mapping $g: [-L,L]\times\mathbb{R} \rightarrow  \mathbb{R}$, is measurable, and $g(\cdot,\cdot): [-L,L]\times\mathbb{R} \rightarrow \mathcal{L} (\mathbb{R})$, is continuous.  
Moreover, for any $\theta,\vartheta \in \mathbb{R}$ $$\sup_{\xi \in [-L,L]}|g(\xi,\theta) -g(\xi,\vartheta)\vert \leq \Psi|\theta-\vartheta\vert ,$$ for some $\Psi >0$. In particular, $g(\cdot,\xi)$ is at most linear growth.

To give a precise meaning to the solution of the above equation, we set 
\begin{equation}
    \psi(\xi)= \xi/L,\, \xi\in [-L,L]
    \end{equation}
 and denote by $S(t)$ the heat semigroup on $(-L,L)$ with zero boundary conditions at the endpoints. We denote by $\mathcal{E}$ the set of all $u \in C[-L,L]$ with $u(-L)=-1,\,u(L)=1$ and by $E$ the space of all $u \in C[-L,L]$ with $u(-L)=u(L)=0$. Note that $\mathcal{E}$ is not a linear space. Then, the mild solution of Eq.(\ref{ACE1}) is defined as $u_\epsilon(t):= \bar{u}_\epsilon(t)+\psi$ and $\bar{u}_\epsilon $ satisfies
\begin{align*}
    \bar{u}_\epsilon(t) &= S(t)(x-\psi) + \int_{0}^{t}S(t-s)\left[(\bar{u}_\epsilon(s)+\psi)-(\bar{u}_\epsilon(s)+\psi)^3 \right]\, ds \\&\ \ \ \ +\epsilon^{\frac{1}{2}}\int_{0}^{t}S(t-s)G(\bar{u}_\epsilon(s))\,d W(s),
    \end{align*}
is the mild solution of the following equation 
\begin{equation}\label{ACE2}
    \begin{cases}
        \partial_t \bar{u}_\epsilon(t,\xi) = \Delta \bar{u}_\epsilon(t,\xi)  + F(\bar{u}_\epsilon(t))(\xi)+ \epsilon^{\frac{1}{2}} G( \bar{u}_\epsilon(t))\partial_t W(\xi),  \xi \in (-L,L),\\
         \bar{u}_\epsilon(0)=x - \psi,\\
         \bar{u}_\epsilon(t,\xi) = 0 , \, |\xi| = L, \; t\geq 0.   
         \end{cases}
\end{equation}
Here, $G(z)\phi(\xi):=g(\xi,z(\xi)+\psi(\xi))\phi(\xi)$ and $F(z)(\xi):= -(z(\xi)+\psi(\xi))^3 + (z(\xi)+\psi(\xi))$, $z \in E$. Note that for each $z$ that satisfies the zero boundary condition, $F(z)$ also satisfies the zero boundary condition, so the mild solution of Eq.(\ref{ACE2}) is well defined. 

According to \cite{cerrai2003stochastic} (see Theorem 5.3, Theorem 5.5, and Proposition 6.1), for any initial datum $x \in E$ and $T>0$ , there exists a unique mild solution $\bar{u}^x_\epsilon\in L^p(\Omega;C([0,T];E) )$ with $p>1$. Moreover, there exists $c_p>0$ such that
$$\mathbb{E}\sup_{t\geq 0}\|\bar{u}^x_\epsilon(t)\Vert^p_{E} \leq c_p(1+ \|x - \psi\Vert^p_{E}).$$
Furthermore, due to the Krylov–Bogoliubov theorem, there exists a sequence $\{t_n\} \uparrow +\infty$ such that the sequence of probability measures 
$$\bar{\mu}_{\epsilon,n}(B):= \frac{1}{t_n}\int_{0}^{t_n}\mathbb{P}(\bar{u}_\epsilon^0(s) \in B)\, d s, \,B \in \mathcal{B}(E)$$ converges weakly to some measure $\bar{\mu}_\epsilon$, which is an invariant measure of Eq.(\ref{ACE2}) (see \cite{cerrai2003stochastic}, Theorem 6.2). We assume the following condition:
\begin{condition}\label{assumption 3}
    Assume that 
    \begin{equation}\label{inf g}
        \inf_{(\xi,\theta) \in [-L,L] \times\mathbb{R}}|g(\xi,\theta)\vert = g_0 >0.
        \end{equation}\end{condition}
By Theorem 3.1 of \cite{maslowski2000probabilistic} and Chapter 7 of \cite{da1996ergodicity} (see also Section 5 of \cite{goldys200512}), under the above assumption, for every $ \epsilon>0$ the invariant measure $\bar{\mu}_\epsilon$ is unique. We define an invariant measure $\mu_\epsilon$ for Eq.(\ref{ACE1}) by 
\begin{equation}\label{mu_epsilon}
    \mu_\epsilon(B+\psi):=\bar{\mu}_\epsilon(B),\;B \in \mathcal{B}(E).
\end{equation} 
Here, $B+\psi:=\{x \in \mathcal{E}:x=y+\psi, y \in E\}$.
The support of this measure is contained in $\mathcal{E}$. The purpose of this paper is to study the LDP problem for $\mu_\epsilon$.

The study of small noise LDP for invariant measures of SPDEs began in the work of Sowers \cite{sowers1992large}. Cerrai and Röckner  considered the stochastic reaction-diffusion equation with a strongly dissipative nonlinear term in a multi-dimensional space (\cite{cerrai2005large}). Brzeźniak and Cerrai considered the stochastic Navier-Stokes equation on $\mathbb{T}^2$ with additive noise (\cite{brzezniak2017large}). Wang considered the stochastic reaction-diffusion equation on an unbounded domain under strongly dissipative conditions (\cite{wang2024large}). In all these cases, the systems have $0$ as their global attractor in the underlying space, following a dissipative argument. In our case, the dynamics are only weakly dissipative, and the existing arguments fail to work for our situation.  The specific potential $V(u)=\frac14(u^2-1)^2$ we consider in this paper has double wells with 2 critical points $\pm 1$ where the potential attains its minimum. Due to the boundary condition of our problem, there exists a unique solution for the following equation 
\begin{equation}\label{eqn:intro}
\begin{cases}
    \Delta m_L(\xi)= V^\prime(m_L(\xi)),\\
    m_L(\pm L)=\pm 1.
\end{cases}
\end{equation}
Moreover, the solution of the noiseless system $u_0(t)$ (i.e. $\epsilon=0$ in equation (\ref{ACE1})) is the $L^2$ gradient flow of the energy functional 
\begin{equation}\label{Energy}
 \mathbf{E}_L(u):= \int_{-L}^L \left[\frac{|u^\prime(\xi)|^2}{2} + V(u(\xi))\right] \; d\xi,\; u(\pm L)=\pm 1.
 \end{equation}
 By using L. Simon's convergence theorem, we show that for any $x \in E$, $z^x_0(t)$ converges to the minimiser of $\mathbf{E}_L $, which is the unique solution of (\ref{eqn:intro}), as time goes to infinity. The minimiser $m_L$ is the stationary profile with two pure phases of alloys, magnetisation, or fluids coexisting at the two ends of a bounded interval. In these contexts, the boundary models the effects of two different materials, magnetisation, or fluids applied to the endpoints.

Note that in the case where the noise is space-time white noise, the invariant measure $\mu$ has an exponential density with respect to the Brownian bridge measure \cite{reznikoff2005invariant}. By the exponential tilting argument, as an alternative version of the Varadhan Lemma, the LDP for the invariant measure will follow immediately (see Theorem III.17 of \cite{hollander2000large}). In the case where $L(\epsilon)$ depends on $\epsilon$ and goes to infinity as $\epsilon \rightarrow0$, the limiting behaviour of the invariant measure $\mu_\epsilon $ depends on the rate at which $L(\epsilon)$ diverges. Such problems are considered by Weber \cite{weber2010sharp}, Otto-Weber-Westdickenburg \cite{otto2014invariant}, Bertini-Stella-Butt{\`a} \cite{bertini2008dobrushin}, and Bertini-Butt{\`a}-Ges{\`u} \cite{bertini2025asymptotics}. For all the works mentioned above, the noise must be additive space-time white noise.
In the case of the multiplicative noise we consider here, the density and tilted LDP arguments break down. To the best of our knowledge, this is the first invariant-measure LDP for the stochastic Allen–Cahn equation with inhomogeneous boundary conditions and unbounded multiplicative noise in a setting where the deterministic dynamics are not strongly dissipative.

A crucial step in proving a large deviation principle for invariant
measures of SPDEs is to establish an exponential estimate for the
invariant measure \(\mu_\epsilon\) outside compact subsets of the state space $E=C([-L,L])$.
In the present work, the equation is driven by multiplicative
space-time white noise whose intensity \(g\) is non-degenerate,
Lipschitz continuous, and satisfies
\begin{equation}\label{eqn:assumption rho}
   |g(\xi,\sigma)|
\leq
C\bigl(1+|\sigma|^\rho\bigr),
\qquad 0\leq\rho<1. 
\end{equation}
Thus, the noise coefficient may be unbounded and may have growth arbitrarily close to linear. It is worth emphasising that the Cerrai–Röckner approach to invariant-measure large deviations for reaction–diffusion equations with cubic dissipative nonlinearities requires the more restrictive condition \(\rho<1/3\). Our method therefore extends the admissible growth range from \(\rho<1/3\) to the full strictly sublinear regime \(\rho<1\). 

Obtaining the required
exponential estimate is particularly delicate in our setting because
the deterministic system is not strongly dissipative, and the driving
space–time white noise has low spatial regularity.
In \cite{cerrai2022large}, bounded subsets of \(H^1\) are used as
compact sets in the exponential-tightness argument. Wang
\cite{wang2024large}, considering trace-class additive noise on
\(H^1(\mathbb R)\), uses intersections of bounded subsets of \(H^1\)
and a weighted \(L^2\)-space. In these works, the required exponential
estimates are obtained by applying It\^{o}'s formula to suitable energy
functionals. This approach is not available in the present setting
because of the lack of strong dissipativity, the low regularity of the
noise, and the unbounded multiplicative coefficient.

The proof of exponential tightness consists of two steps. First, we establish an exponential estimate for \(\mu_\epsilon\) outside bounded subsets of \(E\). Second, we derive a finite-time exponential estimate outside bounded subsets of \(W^{k^\star,p^\star}(-L,L)\), which are compact in \(E\) whenever \(k^\star p^\star>1\). Combining the two estimates with the invariance of \(\mu_\epsilon\) yields the required exponential estimate outside compact subsets of \(E\).
For the finite-time compact estimate, motivated by the semigroup argument used in our previous work \cite{BAI2026111284}, we exploit the smoothing property of the heat semigroup to reduce the \(W^{k^\star,p^\star}\)-estimate of the solution to an estimate for the stochastic convolution. The main new difficulty is the unbounded multiplicative coefficient. To address this, we introduce a stopping-time localisation: before the solution exits a bounded subset of \(E\), the stopped noise coefficient is uniformly bounded. The factorisation method and the Boué–Dupuis variational representation then yield the required estimate for the stopped stochastic convolution, while the corresponding exit probability is controlled separately by a variational argument. This yields the finite-time exponential estimate outside
bounded subsets of \(W^{k^\star,p^\star}(-L,L)\).

The estimate for the invariant measure outside bounded subsets of \(E\)
is obtained by a different argument. We first use the variational
representation to derive a finite-time exponential estimate in the
\(E\)-norm, which is valid for every \(0<\epsilon\leq1\), with the
admissible range of \(\epsilon\) independent of the exponential level
\(r\). This uniformity is essential since an estimate obtained only
from an asymptotic LDP would generally be valid only for
\(0<\epsilon\leq\epsilon_r\), where \(\epsilon_r\) may depend on \(r\).
We then combine this non-asymptotic estimate with a comparison
inequality for the solution and iterate it over successive time
intervals and spatial annuli. This yields an exponential estimate
uniformly for all \(t>0\), and hence, by invariance, an exponential
estimate for \(\mu_\epsilon\) outside bounded subsets of \(E\).

Finally, we start the process with the initial distribution \(\mu_\epsilon\)
and split the probability according to whether the initial condition
belongs to a sufficiently large bounded subset of \(E\). The
contribution from its complement is controlled by the invariant-measure
estimate in \(E\), while the contribution from the bounded part is
controlled by the finite-time \(W^{k^\star,p^\star}\)-estimate. Since
bounded subsets of \(W^{k^\star,p^\star}(-L,L)\) are compact in \(E\),
this proves the exponential tightness of the family
\(\{\mu_\epsilon\}\). In particular, to the best of our knowledge, this provides the first small-noise exponential estimate for invariant measures of reaction–diffusion equations driven by unbounded multiplicative space–time white noise.
The resulting argument combines variational estimates, a stopping-time localisation, the factorisation method, a comparison inequality, and an annular iteration (\ref{eqn:iteration 2}). It does not rely on the construction of a Lyapunov function or on the application of Itô’s formula to an energy functional. The variational method developed here is not specific to the model considered in this paper. It has also been applied, in work in preparation, to the invariant-measure LDP for the stochastic Burgers equation studied in our previous paper \cite{BAI2026111284}, where it yields a substantial improvement over the earlier result. More broadly, the present framework applies to additive noise, bounded multiplicative noise, and unbounded multiplicative coefficients satisfying (\ref{eqn:assumption rho}), and may therefore be useful for studying invariant-measure large deviations for other weakly dissipative SPDEs.
It therefore extends the admissible growth range beyond the condition \(\rho<1/3\) obtained in the earlier Cerrai–Röckner approach. We emphasise, however, that the borderline case \(\rho=1\) is not covered by the present argument and remains open.

\section{LDP for dynamics}\label{sec:LDP Dynamics}
The large deviation result for the dynamics of Eq.(\ref{ACE2}) has been well studied by Cerrai and R{\"o}ckner in \cite{cerrai2004large}. We formulate their results (see \cite{cerrai2004large}, Theorems 6.2 and 6.3) to obtain the Freidlin-Wentzell uniform large deviations principle (FWULDP) for the dynamics of Eq.(\ref{ACE2}):\\
1. For any  $ z \in C([0,T];E)$ and $\delta, \gamma >0$, there exists $\epsilon_0 >0$ such that for any $\epsilon< \epsilon_0$,
\begin{equation}\label{LDP-Dynamics-Lower}
    \mathbb{P}(\|\bar{u}_\epsilon^x-z\Vert_{C([0,T];E)} < \delta)\geq \exp\left( -\frac{I^x_T(z)+\gamma}{\epsilon}\right),
\end{equation} uniformly with respect to $\|x\Vert_{E} \leq R$.\\
2. For any $ r\geq 0$ and $\delta, \gamma >0$, there exists $\epsilon_0 >0$ such that for any $\epsilon< \epsilon_0$,
\begin{equation}\label{LDP-Dynamics-Upper}
    \mathbb{P}(d_{C([0,T];E)}(\bar{u}_\epsilon^x,K_T^x(r)) \geq \delta)\leq \exp\left( -\frac{r-\gamma}{\epsilon}\right),
\end{equation} uniformly with respect to $\|x\Vert_{E }\leq R$, where $K_T^x(r):= \{z \in C([0,T];E): I_T^x(z) \leq r\}$.

For every $z \in C({0,T};E)$, the rate functional $I_T^x$ is given by 
$$I_T^x(z):=\frac{1}{2}\inf\left\{\int_0^T\|f(t)\Vert_{H}^2\, dt; f \in L^2(0,T;H), z= z^x_f\right\},$$ with the convention that $\inf\varnothing = +\infty$.
Here, $z^x_f$ is the solution of the following skeleton equation,
\begin{equation}\label{Skeleton equation-2}
    \begin{cases}
        \partial_tz(t,\xi) = \Delta z(t,\xi) + F(z(t))(\xi) + G(z(t,\xi))f(t)(\xi), \xi \in (-L,L),\\
        z(0,\xi)=x(\xi) ,\\
        z(t,\xi)=0,\;|\xi\vert = L, \;t \geq 0,
    \end{cases}
\end{equation}
for a given function $f \in L^2(0,T;H)$.
Theorem 4.1 in \cite{cerrai2004large} implies that for any $f \in L^2(0,T;H)$ and $x \in E$, there exists a unique mild solution to Eq.(\ref{Skeleton equation-2}) in $C([0,T];E)$. Thus, $I_T^x$ is well defined. Moreover, according to Theorem 5.1 of \cite{cerrai2004large}, the level sets $K_T^x(r)$ are compact. For any fixed $\phi \in L^2(0,T;H)$ and $z \in C([0,T];E)$ we define 
\begin{equation}\label{Z2}
    Z_\phi(z)(t) := \int_{0}^t S(t-s)G(z(s))\phi(s)\,ds, \; t \in [0,T].
\end{equation}
According to Section 4 of \cite{cerrai2004large},
\begin{equation}\label{Y-norm}
    \|Z_\phi(z)\Vert_{C([0,T];E)} \leq c(T)\left(1+\|z\Vert_{C([0,T];E)}\right)\|\phi\Vert_{L^2(0,T;H)},
\end{equation}
and
\begin{equation}\label{Y-norm diff 1}
    \|Z_\phi(z_1) - Z_\phi(z_2)\Vert_{C([0,T];E)} \leq c(T)\left(\|z_1-z_2\Vert_{C([0,T];E)}\right)\|\phi\Vert_{L^2(0,T;H)},
\end{equation}
for some continuous increasing function $c(t)$ with $c(0)=0$. Then, there exists $t_0 >0$ such that $c(t_0)\|\phi\Vert_{L^2(0,T;H)}\leq\frac{1}{2} $. We get 
\begin{equation}\label{Y-norm diff 2}
 \|Z_\phi(z_1) - Z_\phi(z_2)\Vert_{C([0,t_0];E)} \leq \frac{1}{2}\|z_1-z_2\Vert_{C([0,t_0];E)}.
 \end{equation}

Before proving the LDP for $\mu_\epsilon$, we also need to show the Dembo-Zeitouni uniform large deviations principle (DZULDP). 

\begin{corollary}
	\label{corollary DZ2}
	Let $D \subset E$ be a compact set. If $\bar{u}_\epsilon^x$ is the mild solution of Eq.(\ref{ACE2}), then the family $\left\{\mathcal{L} (\bar{u}_\epsilon^x)\right\}_{\epsilon > 0}$ satisfies a Dembo-Zeitouni uniform large deviations principle in $C([0,T];E)$ with a good rate function $I_T^x$, uniformly with respect to $x \in D$. 
 \end{corollary}
\begin{proof}
Since we have FWULDP, according to Theorem 2.7 of \cite{salins2019equivalences}, it suffices to show that for any $x_n \rightarrow x \in E$,

	$$\lim_{n \rightarrow \infty} \max\left(\sup_{z \in K_T ^{x_n}(r)} d_{C([0,T];E)}(z,K_T ^x(r)), \sup_{z \in K_T ^x(r)} d_{C([0,T];E)}(z,K_T ^{x_n}(r))\right) = 0.$$ 
This is implied by the continuity of the skeleton function with respect to the initial conditions. 
To see this, first since $F$ is locally Lipschitz continuous on $E$, there exists $C_R>0$ such that
$$\|F(x) -F(y)\Vert_E \leq C_R\|x-y\Vert_E,$$ for any $x, y \in B_R(E)$.
Moreover, according to Theorem 4.1 of \cite{cerrai2004large}, for $x,y \in E$ and $\phi,\varphi \in L^2(0,T;H)$, there exists $M$ such that $$\|z_\phi^x\Vert_{C([0,T];E)} +\|z_\varphi^y\Vert_{C([0,T];E)}  \leq c(T,\|\phi\Vert_{L^2(0,T;H)}+\|\varphi\Vert_{L^2(0,T;H)} )(1+\|x\Vert_E+\|y\Vert_E)< M.$$ We set $\rho=z^x_\phi -z^y_\varphi.$ Then, for any $t\in [0,t_0]$


   \begin{align*}
    \|\rho(t)\Vert_E &\leq \|S(t)(x-y)\Vert_E + \int_0^t\|S(t-s)\left(F(z_\phi^x)(s)-F(z_\varphi^y)(s) \right)\Vert_E \, ds \\ &\ \ \ \ + \|Z_\phi(z_\phi^x) -Z_{\varphi}(z_\varphi^y)\Vert_E\\ 
    &\leq \|x-y\Vert_E + \int_0^tC_R\|\rho(s)\Vert_E\, ds  + \|Z_{\phi-\varphi}(z_\varphi^y)(t) \Vert_E \\
    &\ \ \ \ + \|Z_\phi(z_\phi^x)(t) -Z_\phi(z_\varphi^y)(t)\Vert_E \\
    &\leq \|x-y\Vert_E + \int_0^tC_R\|\rho\Vert_{C([0,s];E)}\, ds + c(T)(R+1)\|\phi-\varphi\Vert_{L^2(0,T;H)}\\
    &\ \ \ \ + \frac{1}{2}\|\rho\Vert_{C([0,t_0];E)}.
    \end{align*}
Here, the last inequality is due to (\ref{Y-norm}) and (\ref{Y-norm diff 2}). Thus, 
\begin{align*}
    \|\rho\Vert_{C([0,t_0];E)} &\leq 2\|x-y\Vert_E + 2\int_0^{t_0}C_R\|\rho\Vert_{C([0,t];E)}\, dt \\&\ \ \ \ + 2c(t_0)(R+1)\|\phi-\varphi\Vert_{L^2(0,t_0;H)}.    
\end{align*}
By the Gr\"onwall inequality, we obtain
\[
\|z_\phi^x-z_\varphi^y\|_{C([0,t_0];E)}
\leq
c(t_0,R,M)
\left(
\|x-y\|_E
+
\|\phi-\varphi\|_{L^2(0,t_0;H)}
\right),
\]
where
\[
\|x\|_E+\|y\|_E\leq R,
\qquad
\|\phi\|_{L^2(0,T;H)}
+
\|\varphi\|_{L^2(0,T;H)}
\leq M.
\]
Moreover, the a priori estimate for the skeleton equation implies that
\[
\sup_{t\in[0,T]}
\left(
\|z_\phi^x(t)\|_E
+
\|z_\varphi^y(t)\|_E
\right)
\leq C(T,R,M).
\]
Therefore, the preceding argument can be repeated on each interval
\[
[nt_0,(n+1)t_0]\cap[0,T],
\qquad
n=0,\ldots,\left\lceil\frac{T}{t_0}\right\rceil-1,
\]
with constants depending only on \(T\), \(R\), and \(M\). By iteration, we obtain
\begin{equation}\label{z_phi^x-z_varphi^y}
\|z_\phi^x-z_\varphi^y\|_{C([0,T];E)}
\leq
c(T,R,M)
\left(
\|x-y\|_E
+
\|\phi-\varphi\|_{L^2(0,T;H)}
\right).
\end{equation}
Therefore, for any $z \in K_T^x$, there exists $\phi$ such that $z=z_\phi^x$ and $\frac12\|\phi\Vert_{L^2(0,T;H)}^2\leq r $. Then, we have $$d_{C([0,T];E)}(z,K_T^{x_n}(r)) \leq d_{C([0,T];E)}(z_\phi^x,z_\phi^{x_n})  \leq c(T,\|x\Vert_E,r)\|x-x_n\Vert_E, $$
which implies the result.
\end{proof}

\section{The rate functional of LDP for invariant measures}
\subsection{Energy functional and quasi-potential}
In this section, we study the rate functional $U_L$ for invariant measures. We first introduce the energy functional $\mathbf{E}_L$. For functions $u:[-L,L] \rightarrow \mathbb{R}$ with boundary conditions $u(\pm L)=\pm 1$, we consider the energy functional (\ref{Energy}) with
\begin{equation}
    V(u)= \frac{1}{4}(u^2-1)^2 .
\end{equation}
According to Proposition 3.1 of \cite{bertini2008dobrushin}, the functional $\mathbf{E}_L$ has a unique minimizer $m_L$ in $\mathcal{E}$ and $m_L$ is the unique solution for the equation
\begin{equation}\label{elliptic ACE0}
\begin{cases}
    \Delta m(\xi) = m^3(\xi)-m(\xi) , \xi \in (-L,L)\\
    m(\xi)=\sgn(\xi), |\xi\vert=L.
    \end{cases}
\end{equation}
Equation (\ref{elliptic ACE0}) yields that, for $\xi \in (-L,L)$
\begin{equation}\label{eqn:m_L}
     m^\prime(\xi)^2 = \frac{1}{2}(m^2(\xi)-1)^2+e_L,
     \end{equation} where $e_L>0$ depends on the spatial scale $L$.
Taking (\ref{eqn:m_L}) into account in manipulating (\ref{Energy}), for any $u \in \mathcal{E}$, we have
\begin{equation}\label{E_L align}
\begin{split}
     \mathbf{E}_L(u)&= \int_{-L}^L \frac{|u^\prime(\xi)|^2}{2} + \frac{1}{4}(u^2(\xi)-1)^2 \; d\xi \\ 
    &= \int_{-L}^L \Bigg[ \frac{1}{2}  \left(u^\prime(\xi) - \sqrt{\frac{1}{2}(u^2(\xi)-1)^2+e_L} \right)^2  \\ 
    &\ \ \ \ + \sqrt{\frac{1}{2}(u^2(\xi)-1)^2+e_L} \cdot u^\prime(\xi) -\frac{1}{2}e_L\Bigg]\;d\xi \\ 
    &= \frac{1}{2}  \int_{-L}^L \left(u^\prime(\xi) - \sqrt{\frac{1}{2}(u^2(\xi)-1)^2+e_L} \right)^2 \;d\xi  \\ &\ \ \ \ + \int_{-1}^1 \sqrt{\frac{1}{2}(u^2-1)^2+e_L}\;du - Le_L. \end{split}
     \end{equation}
Then, 
\begin{equation}\label{eqn:Energy m_L}
    \mathbf{E}_L(m_L)= \int_{-1}^1 \sqrt{\frac{1}{2}(u^2-1)^2+e_L}\;du - Le_L
    \end{equation} and for any $u \in \mathcal{E}$,
    $$\mathbf{E}_L(u) \geq \mathbf{E}_L(m_L). $$
We denote for any $\bar{u} \in E$
\begin{equation}\label{Energy 0}
     \mathbf{E}_L^*(\bar{u})= \mathbf{E}_L(\bar{u}+ \psi)-  \mathbf{E}_L(m_L)
\end{equation}
so that $\mathbf{E}_L^*(m_L- \psi) = 0$. Note that $m_L-\psi$ is the only solution for the equation 
\begin{equation}\label{elliptic ACE}
\begin{cases}
    \Delta z(\xi) + F(z)(\xi) = 0,\\
    z(-L)=z(L) = 0.
    \end{cases}
\end{equation}
One can prove the following proposition which will be needed in this paper:
\begin{proposition}
     \label{prop: small energy}
    There exists $\eta >0$ such that if $\bar{u }\in E$ and $\mathbf{E}_L^*(\bar{u}) \leq \eta $, then 
    $$\|\bar{u}-(m_L-\psi)\Vert_E \leq C\sqrt{\eta}.$$
    \end{proposition}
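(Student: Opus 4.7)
The plan is to exploit the completion-of-squares identity derived in (\ref{E_L align}), which, with the notation $u := \bar u + \psi$ and $G(y) := \sqrt{\tfrac{1}{2}(y^2-1)^2 + e_L}$, may be rewritten as
$$\mathbf{E}_L^*(\bar u) \;=\; \tfrac{1}{2}\int_{-L}^{L}\bigl(u'(\xi)-G(u(\xi))\bigr)^2\,d\xi.$$
By (\ref{eqn:m_L}), together with the boundary values $m_L(\pm L)=\pm 1$ and the fact that $(m_L')^{2}\geq e_L>0$ precludes a sign change, one has $m_L' = G(m_L)$ on $[-L,L]$. Thus the hypothesis $\mathbf{E}_L^*(\bar u)\leq\eta$ says precisely that $u$ satisfies the same first-order ODE as $m_L$ up to an $L^2$-error $h:=u'-G(u)$ with $\|h\|_{L^2}^2\leq 2\eta$, and with the same initial value $u(-L)=m_L(-L)=-1$.

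First I would extract a uniform $L^\infty$-bound on $u$. The energy bound gives $\|u'\|_{L^2}^2 \leq 2(\mathbf{E}_L(m_L)+\eta)$, so restricting to $\eta\leq 1$ and using $u(L)=1$ together with Cauchy-Schwarz yields a uniform bound $\|u\|_\infty \leq M$ depending only on $L$; the maximum principle applied to (\ref{elliptic ACE0}) also gives $\|m_L\|_\infty\leq 1 \leq M$. Since $G$ is smooth on $\mathbb{R}$, it is Lipschitz on $[-M,M]$ with some constant $K=K(L)$.

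Setting $w:=u-m_L$ (so $w\in E$ and $w(\pm L)=0$), one has
$$w'(\xi) \;=\; \bigl(G(u(\xi))-G(m_L(\xi))\bigr)+h(\xi),\qquad |w'(\xi)|\leq K|w(\xi)|+|h(\xi)|.$$
Integrating from $-L$ and applying Cauchy-Schwarz to $h$ yields
$$|w(\xi)| \;\leq\; K\int_{-L}^{\xi}|w(s)|\,ds+\sqrt{2L}\,\|h\|_{L^2},$$
so Gronwall's inequality produces $\|w\|_\infty \leq \sqrt{2L}\,e^{2LK}\,\|h\|_{L^2} \leq C(L)\sqrt{\eta}$, which is exactly the bound claimed.

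The only non-routine ingredient is the uniform $L^\infty$-control on $u$, which is essential because $G$ grows like $y^2$ at infinity and is therefore only locally Lipschitz; without such a bound, the comparison with $m_L$ could not be closed by a linear Gronwall. Once the uniform bound is in place, the proof reduces to the completion-of-squares identity already encapsulated in (\ref{E_L align}) plus an elementary Gronwall argument against the minimizer.
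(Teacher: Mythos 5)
Your argument is correct and follows the paper's overall strategy: recast the energy bound via the completion-of-squares identity as the first-order ODE comparison $u' = V(u) + h$ with $\|h\|_{L^2}\leq\sqrt{2\eta}$ and $u(-L)=m_L(-L)=-1$, obtain a uniform $L^\infty$ bound on $u$, then close by a linear Gronwall estimate against $m_L$ using the local Lipschitz property of $V$ on the bounded range. The one place you diverge is the $L^\infty$ step. The paper observes that since the range of $u$ contains $[-1,1]$, the energy difference already dominates $\int_{\inf u}^{-1}V+\int_{1}^{\sup u}V$ (a coarea/rearrangement-type lower bound on $\int|u'|V(u)\,d\xi$), forcing $\sup|u|\leq 2$ once $\eta$ is small. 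You instead use the cruder $\|u'\|_{L^2}^2\leq 2\mathbf{E}_L(u)\leq 2(\mathbf{E}_L(m_L)+1)$ together with the fixed boundary value and Cauchy--Schwarz to get a uniform $\|u\|_\infty\leq M(L)$. Both are sound; yours is more elementary and bypasses the level-set reasoning, at the cost of a possibly larger Lipschitz constant $K$ for $V$ (since $M(L)$ may exceed $2$), which only worsens the constant $C$ in the final estimate. The Gronwall closing step is then identical to the paper's, and your observation that $(m_L')^2\geq e_L>0$ fixes the sign $m_L'=V(m_L)$ is a useful explicit justification that the paper leaves implicit.
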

    \begin{proof}
        For $\bar{u} \in E$, we denote $u= \bar{u}+\psi$. According to (\ref{eqn:m_L}), (\ref{E_L align}), (\ref{eqn:Energy m_L}), and (\ref{Energy 0}), $$\mathbf{E}_L^*(\bar{u})= \frac{1}{2}  \int_{-L}^L \left(u^\prime(\xi) - \sqrt{\frac{1}{2}(u^2(\xi)-1)^2+e_L} \right)^2 \;d\xi.$$
       Let us write 
    $$u^\prime(\xi)= V(u(\xi)) + \phi(\xi), u(-L)=-1,$$ where $V(\cdot)=\sqrt{\frac{1}{2}((\cdot)^2-1)^2+e_L} $ is locally Lipschitz. Assume $\mathbf{E}_L^*(\bar{u}) \leq \eta $, then $\phi$ satisfies $\int_{-L}^L\phi^2(\xi)\;d\xi \leq 2\eta $. By the Cauchy-Schwarz inequality,
    $$\int_{-L}^L|\phi(\xi)\vert\;d\xi \leq 2\sqrt{L\eta}. $$ 
     Moreover, from (\ref{E_L align})
        \begin{align*}
            \eta \geq \mathbf{E}_L^*(\bar{u}) &\geq \int_{\inf_{\xi \in [-L,L]}u(\xi)} ^{\sup_{\xi \in [-L,L]}u(\xi)}  V(u)\;du -Le_L - \left(\int_{-1}^1V(u)\;du -Le_L   \right)\\ &\geq \int_{\inf_{\xi \in [-L,L]}u(\xi)} ^{-1}  V(u)\;du  + \int_1 ^{\sup_{\xi \in [-L,L]}u(\xi)}  V(u)\;du          \end{align*}
            This implies that we can choose $\eta$ sufficiently small such that $\sup_{\xi \in [-L,L]}|u(\xi)\vert \leq 2$.
    Thus, by (\ref{eqn:m_L}) and the locally Lipchitz property of $V(\cdot)$, one obtains 
    \begin{align*}
    \frac{d}{d \xi}|u(\xi)-m_L(\xi)\vert &\leq |V(u(\xi)) - V(m_L(\xi))\vert + |\phi(\xi)\vert \\ &\leq C|u(\xi)-m_L(\xi)\vert + |\phi(\xi)\vert.
\end{align*}
    By Gronwall's inequality, one can choose $\eta$ small enough such that $$\|\bar{u}-(m_L-\psi)\Vert_E= \sup_{\xi \in[-L,L]}|u(\xi)-m_L(\xi)\vert \leq 2\sqrt{L\eta}e^{2LC} . $$  This completes the proof.
    \end{proof}
Now, we can define the rate functional. The quasi-potential $U_L:E \rightarrow \mathbb{R}$ for LDP of $\bar{\mu}_\epsilon$ is defined as
\begin{equation}\label{Rate functional}
    U_L(\zeta):= \inf \left\{I_T^{m_L-\psi}(z): T >0, z \in C([0,T];E), z(0)= m_L-\psi, z(T)= \zeta \right\}.
\end{equation}
Note that the level sets of $\mathbf{E}_L^*$ are compact (see Proposition 3.1 of \cite{bertini2008dobrushin}). By the Sobolev embedding theorem, $\mathbf{E}_L^*(\bar{u})$ is finite if and only if $\bar{u} \in H^1(-L,L)$. For $\bar{u}, h \in H^1(-L,L)$, we denote by $D_h\mathbf{E}_L^*(\bar{u})$ the Fr{\'e}chet derivative of $\mathbf{E}_L^*$ on $\bar{u}$, then
\begin{align*}
    D_h\mathbf{E}_L^*(\bar{u}) &= \lim_{t \rightarrow 0}\frac{\mathbf{E}_L^*(\bar{u}+th)- \mathbf{E}_L^*(\bar{u}) }{t} \\ 
    &= \int_{-L}^L [-(\bar{u}+\psi)(\xi)+(\bar{u}+\psi)^3(\xi)]h(\xi)\;d\xi\\
    &\ \ \ \ + \int_{-L}^L (\bar{u}+\psi)^\prime(\xi)h^\prime(\xi) \;d\xi \\
    &=- \int_{-L}^L [\Delta \bar{u}(\xi)+ (\bar{u}+\psi)(\xi)-(\bar{u}+\psi)^3(\xi)]h(\xi) \;d\xi
\end{align*}
Here, the last equality is due to integration by parts and the fact that $\Delta \psi =0$. This implies that $$D_h\mathbf{E}_L^*(u)= \langle D\mathbf{E}_L^*(u), h \rangle_{H^{-1},H^1}, $$ where
\begin{equation}\label{D Energy 0}
    D\mathbf{E}_L^*(u)= -[\Delta u+ (u+\psi)-(u+\psi)^3].
\end{equation}
The functions $U_L$ and $\mathbf{E}_L^*$ have the following connection
\begin{proposition}\label{prop:energy upper bound}
    For any $\zeta \in E$, there exists $C>0$ such that
    \begin{equation} \label{Energy upper bound}
        \mathbf{E}_L^*(\zeta) \leq C U_L(\zeta)(U_L(\zeta)+1).
    \end{equation}
\end{proposition}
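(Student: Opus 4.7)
I would use the variational characterisation of $U_L$ together with the fact that the skeleton equation (\ref{Skeleton equation-2}) is a controlled $L^2$-gradient flow of $\mathbf{E}_L^*$. Indeed, comparing the drift in (\ref{Skeleton equation-2}) with the Fr\'echet derivative formula (\ref{D Energy 0}) yields $\Delta z + F(z) = -D\mathbf{E}_L^*(z)$, so (\ref{Skeleton equation-2}) rewrites as $\dot z = -D\mathbf{E}_L^*(z) + G(t,z)f$. The proof then reduces to a Lyapunov-type calculation along a near-optimal trajectory.

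Fix $\delta>0$ and, by (\ref{Rate functional}), choose $T>0$ and a control $f\in L^2(0,T;H)$ such that the associated skeleton trajectory $z = z_f^{m_L-\psi}$ satisfies $z(T)=\zeta$ and $\tfrac12\int_0^T\|f(t)\|_H^2\,dt \leq U_L(\zeta)+\delta$. Differentiating $\mathbf{E}_L^*$ along $z$, pairing with $D\mathbf{E}_L^*(z)\in H^{-1}$, and applying Young's inequality gives
\begin{equation*}
\tfrac{d}{dt}\mathbf{E}_L^*(z(t)) = -\|D\mathbf{E}_L^*(z(t))\|_H^2 + \langle D\mathbf{E}_L^*(z(t)),\,G(t,z(t))f(t)\rangle_H \leq \tfrac{1}{4}\|G(t,z(t))f(t)\|_H^2.
\end{equation*}
Since $G$ acts by pointwise multiplication with $g$, $\|G(t,z)f\|_H^2 \leq \|g(t,\cdot,z+\psi)\|_{L^\infty(-L,L)}^2\,\|f\|_H^2$, and the Lipschitz/continuity hypotheses on $g$ combined with the a priori bound $\|z\|_{C([0,T];E)} \leq c(T,\|f\|_{L^2(0,T;H)})$ (cf.\ Theorem 4.1 of \cite{cerrai2004large}, also invoked in the proof of Corollary \ref{corollary DZ}) bound the prefactor by a constant $\bar C$. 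Integrating on $[0,T]$ and using $\mathbf{E}_L^*(m_L-\psi)=0$ yields
\begin{equation*}
\mathbf{E}_L^*(\zeta) \leq \tfrac{\bar C}{4}\int_0^T\|f(t)\|_H^2\,dt \leq \tfrac{\bar C}{2}(U_L(\zeta)+\delta),
\end{equation*}
and letting $\delta\downarrow 0$ closes the estimate.

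Two technical points will need care. First, the chain rule for $t\mapsto\mathbf{E}_L^*(z(t))$ is formal and should be justified by approximating $f$ with smooth controls and exploiting the analyticity of $S(t)$ to produce enough spatial regularity of $z$ to pair honestly with $D\mathbf{E}_L^*(z)\in H^{-1}$. The main obstacle, however, is that $\bar C$ a priori depends on $T$ and $\|f\|_{L^2(0,T;H)}$, hence on $\zeta$ itself; obtaining a genuinely uniform $C$ in the statement requires either a standing upper bound on $g$ or retaining the full dissipation term $-\|D\mathbf{E}_L^*(z)\|_H^2$ (rather than absorbing half of it through Young) and invoking a Poincar\'e-type inequality near $m_L-\psi$, whose validity is supported by Proposition \ref{prop: small energy}. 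This localisation near the minimiser is where I expect the bulk of the analytic work to lie.
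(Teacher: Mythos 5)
Your core calculation is exactly the paper's: rewrite the skeleton dynamics as a controlled $L^{2}$-gradient flow of $\mathbf{E}_L^{*}$, pair with $D\mathbf{E}_L^{*}(z)$, complete the square via Young so the dissipation term $-\|\Delta z + F(z)\|_H^{2}$ is cancelled, and integrate using $\mathbf{E}_L^{*}(m_L-\psi)=0$. The paper's display (\ref{Energy inequality}) is literally your inequality, with the prefactor written as $\tfrac{C}{2}(\|z\|_{C([0,T];E)}^{2}+1)$ rather than via $\|g\|_{L^\infty}$, which is the same thing once one uses the linear growth of $g$.

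Where you diverge is in the final paragraph, and this is the only genuine gap. You are worried that $\bar C$ depends on $T$ and $\|f\|_{L^{2}(0,T;H)}$ and therefore on $\zeta$, and you propose to repair this by assuming $g$ bounded or by a Poincar\'e argument near $m_L-\psi$. Neither is needed. First, the statement's quantifiers are ``for any $\zeta\in E$ there exists $C>0$'', so the constant \emph{is} permitted to depend on $\zeta$; there is no requirement of uniformity. Second, the dependence on $T$ is eliminated not by the $T$-dependent estimate of Theorem 4.1 of \cite{cerrai2004large} that you invoke, but by the $T$-\emph{independent} bound (\ref{ineqn: skeleton}), namely $\sup_{t\geq 0}\|z(t)\|_E \leq C\|f\|_{L^{2}([0,+\infty);H)}$, obtained following Proposition 6.1 of \cite{cerrai2003stochastic}. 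Feeding this into (\ref{Energy inequality}) and using $\|f\|^{2}_{L^{2}}=2I_T(z)$, then taking the infimum over $(T,z)$, gives $\mathbf{E}_L^{*}(\zeta)\le \tfrac{C}{2}\bigl(2C^{2}U_L(\zeta)+1\bigr)\,U_L(\zeta)$, which is of the required form with a $\zeta$-dependent constant. So your strategy is right, but you stopped one citation short of closing it, and the extra machinery you sketch (Poincar\'e near the minimiser, boundedness of $g$) is superfluous.
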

\begin{proof}
Suppose $U_L(\zeta)=+ \infty $, then our claim is clearly true. Therefore, we assume $U_L(\zeta) < \infty$. Then, there exist some $T>0$ and $z \in C([0,T];E)$ satisfying $z(0)=m_L-\psi$, $z(T)=\zeta$ such that $z$ is a solution of equation ($\ref{Skeleton equation-2}$) with $f \in L^2(0,T;H)$ such that $$I_T^{m_L-\psi}(z)=\frac{1}{2}\|f\Vert_ {L^2(0,T;H)}^2 < +\infty.$$ Multiplying both parts of equation (\ref{Skeleton equation-2}) by $D \mathbf{E}^*_L(z(t))$ and integrating with respect to $\xi $ in $[-L,L]$, by the chain rule, one has
\begin{align*}
    \frac{d\;\mathbf{E}^*_L(z(t))}{dt}&=  - \int_{-L}^{L}|\Delta z(t,\xi)+F(z(t))(\xi) \vert^2\;d\xi \\
    &\ \ \ \ -  \int_{-L}^L G(t,z(t))f(t,\xi)\cdot[\Delta z(t,\xi)+F(z(t))(\xi)] \; d\xi \\ 
    &\leq -\int_{-L}^L|\Delta z(t,\xi) + F(z(t))(\xi)\vert^2\;d\xi + \int_{-L}^L|\Delta z(t,\xi) + F(z(t))(\xi)\vert^2\;d\xi\\
    &\ \ \ \ +\frac{1}{4}C(\|z(t)\Vert_E+1)^2\|f(t)\Vert_H^2.
    \end{align*}
    Integrating by $t$, one gets
    \begin{equation}\label{Energy inequality}
        \mathbf{E}_L^*(\zeta) - \mathbf{E}^*_L(z(0)) \leq \frac{C}{2}(\|z\Vert_{C([0,T];E)}^2+1)I_T(z).
    \end{equation}
    Following the same procedure of the proof in Proposition 6.1 of \cite{cerrai2003stochastic}, we have \begin{equation}\label{ineqn: skeleton}
        \sup_{t \geq 0}\|z(t)\Vert_E \leq  C( \|f\Vert_{L^2([0,+\infty);H)}+1).
        \end{equation}
   Since $\mathbf{E}_L^*(m_L-\psi) = 0$, taking the infimum over $T$ and $z \in C([0,T];E)$, 
    $$\mathbf{E}_L^*(\zeta) \leq CU_L(\zeta)(U_L(\zeta)+1).$$
\end{proof}

The above proposition shows that the energy functional is controlled by the rate functional. A special case for the skeleton equation is $f = 0$. The following lemma states the global dynamics of $z_0^x$.

\begin{lemma} \label{lemma-energy decreasing}
    For any $x \in H$, let $z_0^x$ be the solution of the skeleton equation (\ref{Skeleton equation-2}) with $f=0$. Then, 
    \begin{equation}\label{equilibrium}
        \lim_{t \rightarrow +\infty}\|z_0^x(t)-(m_L-\psi)\Vert_{H^1(-L,L)} = 0,
    \end{equation}
    and 
    \begin{equation}\label{limit energy}
        \lim_{t \rightarrow +\infty}\mathbf{E}_L^*(z_0^x(t)) = 0.
    \end{equation}
\end{lemma}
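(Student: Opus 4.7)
The plan is to recognize that the skeleton flow with $f=0$ is exactly the $L^{2}$-gradient flow of the real-analytic Ginzburg–Landau functional, so I would combine energy dissipation, compactness of sublevel sets, and L.\ Simon's convergence theorem. Concretely, writing $u(t):=z_0^x(t)+\psi$, the skeleton equation (\ref{Skeleton equation-2}) with $f=0$ reads
\begin{equation*}
\partial_t u = \Delta u + u - u^{3} = -D\mathbf{E}_L(u), \qquad u(\pm L)=\pm 1,
\end{equation*}
since $\Delta\psi=0$. For $x\in H$, parabolic smoothing of the Dirichlet heat semigroup makes $u(t)\in H^{1}(-L,L)\cap L^{\infty}(-L,L)$ for every $t>0$, so $\mathbf{E}_{L}^{*}(z_0^x(t))$ is finite once $t>0$, and without loss of generality I may shift time and assume the initial datum already lies in $H^{1}$.

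Next I would set up the energy dissipation identity. Testing the equation against $D\mathbf{E}_{L}^{*}(z_0^x(t))=-[\Delta z_0^x+F(z_0^x)]$ exactly as in the computation preceding (\ref{Energy inequality}), but now with $f=0$, yields
\begin{equation*}
\frac{d}{dt}\mathbf{E}_{L}^{*}(z_0^x(t))= -\int_{-L}^{L}\bigl|\Delta z_0^x(t,\xi)+F(z_0^x(t))(\xi)\bigr|^{2}\,d\xi\;\le 0 .
\end{equation*}
Thus $t\mapsto \mathbf{E}_{L}^{*}(z_0^x(t))$ is non-increasing and bounded below by $0$, and in particular $\int_{0}^{\infty}\|\Delta z_0^x+F(z_0^x)\|_{L^{2}}^{2}\,dt <\infty$. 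Because the sublevel sets of $\mathbf{E}_{L}^{*}$ are relatively compact in $H^{1}(-L,L)$ (Proposition 3.1 of \cite{bertini2008dobrushin}, using Rellich's embedding), the trajectory $\{z_0^x(t)\}_{t\ge 1}$ is precompact in $H^{1}$, so its $\omega$-limit set $\Omega$ is a non-empty compact subset of $H^{1}$ consisting entirely of solutions of the Euler–Lagrange equation (\ref{elliptic ACE}).

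To upgrade precompactness to convergence I would invoke L.\ Simon's convergence theorem. The potential $V(u)=\tfrac14(u^{2}-1)^{2}$ being polynomial, $\mathbf{E}_{L}$ is real-analytic on the affine space $\psi+H^{1}_{0}(-L,L)$, and its Hessian $-\Delta + 3u^{2}-1$ with zero Dirichlet condition is Fredholm; the \L{}ojasiewicz–Simon gradient inequality therefore holds at every critical point, and Simon's theorem guarantees that the precompact gradient-flow trajectory converges in $H^{1}$ to a single critical point. By the uniqueness statement in Proposition 3.1 of \cite{bertini2008dobrushin}, that critical point must be $m_{L}$, so $z_0^x(t)\to m_{L}-\psi$ in $H^{1}$, which is (\ref{equilibrium}). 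The limit (\ref{limit energy}) then follows immediately from the $H^{1}$-continuity of $\mathbf{E}_{L}^{*}$ (equivalently from $\mathbf{E}_{L}^{*}(m_{L}-\psi)=0$ and monotonicity).

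The main obstacle is step three: verifying the hypotheses of Simon's theorem in our boundary-data setting. The analyticity and Fredholm checks are standard for the polynomial Ginzburg–Landau functional on a bounded interval, but they must be carried out on the affine space adapted to the inhomogeneous boundary condition $u(\pm L)=\pm 1$ rather than on $H^{1}_{0}$; once the \L{}ojasiewicz–Simon inequality is established near $m_{L}$, the rest of the argument (precompactness plus energy dissipation) is routine.
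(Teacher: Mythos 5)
Your proposal follows essentially the same route as the paper's: recognize the $f=0$ skeleton flow as the $L^2$-gradient flow of $\mathbf{E}_L^*$, use the energy dissipation identity, extract compactness, and invoke L.~Simon's (\L{}ojasiewicz--Simon) convergence theorem together with the uniqueness of the minimizer $m_L$. Your use of parabolic smoothing to reduce $x\in H$ to an $H^1$ initial datum is a clean alternative to the paper's explicit Gr\"onwall estimate showing $z_0^x(t')\in H^1$ for some $t'>0$.

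There is, however, a genuine gap in your precompactness step. You assert that the sublevel sets of $\mathbf{E}_L^*$ are \emph{relatively compact in $H^1(-L,L)$}, citing Rellich's embedding. This is not correct: a sublevel set of $\mathbf{E}_L^*$ is merely bounded in $H^1$, hence precompact in $E$ (and in every $L^p$) by Rellich, but a bounded set in $H^1$ is \emph{not} precompact in the $H^1$ topology, since $H^1$ does not embed compactly into itself. You need $H^1$-precompactness of the trajectory $\{z_0^x(t)\}_{t\geq1}$ both to guarantee a nonempty $\omega$-limit set in $H^1$ and to satisfy the hypothesis of Simon's/Jendoubi's theorem, which is what ultimately yields the $H^1$-convergence asserted in (\ref{equilibrium}); the sublevel-set argument only gives $E$-precompactness. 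The fix requires a different input: one option is parabolic regularity (a uniform $H^2$-bound on $\{z_0^x(t)\}_{t\geq 1}$, which by Rellich gives precompactness in $H^1$). The paper instead proves strong $H^1$-convergence along the extracted subsequence directly, combining $\Delta z_L+F(z_L)=0$, the fact that $\|\Delta z_0^x(t_k)+F(z_0^x(t_k))\|_H\to 0$ (available because this quantity is square-integrable in time, so one may choose $t_k$ along which it is small), weak $H^1$- and strong $E$-convergence, and the local Lipschitz property of $F$, so that $\|z_0^x(t_k)-z_L\|_{H^1}^2$ is bounded by terms that vanish. You should replace the sublevel-set compactness claim with one of these justifications before applying Simon's theorem.
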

\begin{proof}
Multiplying both sides of equation (\ref{Skeleton equation-2}) by $z_0^x(t)$ and integrating with respect to $\xi$, we have
\begin{align*}
   &\ \ \ \  \frac{1}{2}\partial_t\|z_0^x(t)\Vert_H^2 + \|z_0^x(t)\Vert_{H^1(-L,L)}^2\\ &= \int_{-L}^L-(z_0^x(t,\xi)+\psi(\xi))^3\cdot z_0^x(t,\xi) +(z_0^x(t,\xi)+\psi(\xi))\cdot z_0^x(t,\xi) \;d\xi \\ &\leq -\|z_0^x(t)\Vert_H^2 + CL.
\end{align*}
By Gronwall's inequality,
$$\|z_0^x(t)\Vert_H^2 + \int_0^t\|z_0^x(s)\Vert_{H^1(-L,L)}^2\;ds \leq C(t,L)(\|x\Vert_H^2 +1).$$ This implies that there exists $0<t^\prime\leq1$ such that $z_0^x(t^\prime) \in H^1(-L,L)$.
Multiplying both parts of equation (\ref{Skeleton equation-2}) by $D \mathbf{E}^*_L(z^x_0(t))$ and integrating with respect to $\xi $ in $[-L,L]$. Since $f=0$, by chains rule
$$\frac{d\;\mathbf{E}^*_L(z_0^x(t))}{dt} = -\|\Delta z_0^x(t,\cdot) + F(z_0^x(t))(\cdot)\Vert_H^2. $$ Thus, for any $t >t^\prime$,
\begin{equation}\label{Energy equality}
    \mathbf{E}_L^*(z_0^x(t^\prime)) - \mathbf{E}_L^*(z_0^x(t)) = \int_{t^\prime}^t \|\Delta z_0^x(s,\cdot) + F(z_0^x(s))(\cdot)\Vert_H^2\;ds \geq 0.
    \end{equation}
Since $\mathbf{E}_L^*(\zeta)\geq 0$, for any $\zeta \in E$, then for all $t >0$
\begin{equation}\label{gradient L^2 norm}
    \int_{t^\prime}^t \|\Delta z_0^x(s,\cdot) + F(z_0^x(s))(\cdot)\Vert_H^2\;ds \leq \mathbf{E}_L^*(z_0^x(t^\prime)) < + \infty, 
\end{equation}
and 
$$\mathbf{E}_L^*(z_0^x(t^\prime)) \geq \mathbf{E}^*_L(z^x_0(t))\geq \frac{1}{2}\|z_0^x(t)+\psi\Vert_{H^1}^2 - \mathbf{E}_L(m_L)\geq \|z_0^x(t)\Vert_{H^1}^2 - \frac{1}{L}-\mathbf{E}_L(m_L).$$
Let $\{t_n\}_{n\in\mathbb N}$ be an arbitrary sequence such that
$t_n\to+\infty$. By the uniform $H^1$-bound and the compact embedding
$H^1(-L,L)\hookrightarrow E$, there exists a subsequence, still denoted
by $\{t_n\}$, and some $z_L\in H^1(-L,L)$ such that
\[
    z_0^x(t_n)\rightharpoonup z_L
    \quad\text{weakly in }H^1(-L,L),
\]
and
\[
    z_0^x(t_n)\to z_L
    \quad\text{strongly in }E.
\]
Moreover, the preceding argument shows that
\[
    z_0^x(t_n)\to z_L=m_L-\psi
    \quad\text{strongly in }H^1(-L,L).
\]
Since the sequence $\{t_n\}$ was arbitrary, it follows that
\[
    \{z_0^x(t):t\geq1\}
\]
is relatively compact in $H^1(-L,L)$. By (\ref{gradient L^2 norm}), there exists a subsequence still denoted as $z_0^x(t_k)$ such that 
$$\lim_{k \rightarrow +\infty}\|\Delta z_0^x(t_k) + F(z_0^x(t_k))\Vert_H = 0.$$
Since $F$ is locally Lipschitz, for $\tilde{h} \in H$
$$\lim_{k \rightarrow +\infty }\langle F(z_0^x(t_k)) - F(z_L), \tilde{h}\rangle_H \leq C\lim_{k \rightarrow +\infty}\langle |z_0^x(t_k)-z_L\vert,\tilde{h}\rangle_H = 0. $$
Then, for $h \in H^1(-L,L)$,
\begin{align*} 
    \lim_{k \rightarrow +\infty}\langle \Delta z_0^x(t_k) +F(z_0^x(t_k)), h\rangle_H&= \lim_{k \rightarrow +\infty}-\langle \nabla z_0^x(t_k), \nabla h\rangle_H + \langle F(z_0^x(t_k)),h\rangle_H\\
    &= -\langle \nabla z_L,\nabla h\rangle_H + \langle F(z_L) , h\rangle_H \\
    &= \langle \Delta z_L + F(z_L), h\rangle =0.
    \end{align*}
This implies $$\Delta z_L + F(z_L) = 0.$$
Due to the uniqueness of the solution for equation (\ref{elliptic ACE}), one obtains 
$$z_L = m_L-\psi .$$ Moreover, 
\begin{align*}
    \lim_{k \rightarrow +\infty} \|z_0^x(t_k)-z_L\Vert_{H^1(-L,L)}^2 &= \lim_{k \rightarrow +\infty}-\langle \Delta z_0^x(t_k) - \Delta z_L, z_0^x(t_k)-z_L\rangle_H \\
    &= \lim_{k \rightarrow +\infty}-\langle \Delta z_0^x(t_k) + F(z_0^x(t_k)), z_0^x(t_k) - z_L\rangle_H\\
    &\ \ \ \ + \lim_{k \rightarrow + \infty}-\langle F(z_L) - F(z_0^x(t_k)), z_0^x(t_k)-z_L\rangle_H \\&=0.
    \end{align*}
    Here, the Cauchy-Schwarz inequality and the local Lipschitz continuity of $F$ are used when the above two limits are taken. Since the sequence $\{t_n\}$ was arbitrary, the same argument shows
that every sequence contained in
\[
    \{z_0^x(t):t\geq1\}
\]
admits a subsequence converging strongly in $H^1(-L,L)$. Hence this
set is relatively compact in $H^1(-L,L)$.

    The above arguments show that the assumptions in Theorem 1.1 of \cite{jendoubi1998simple} are satisfied. Applying L.Simon's convergence theorem, we have that $z(t)$ is convergent to one equilibrium of the system which satisfies equation (\ref{elliptic ACE}) as $ t\rightarrow \infty$. By the uniqueness of the limit,
$$ \lim_{t \rightarrow +\infty}\|z_0^x(t)-(m_L-\psi)\Vert_{H^1(-L,L)}=0.$$ 
Thus, by the Sobolev embedding,
$$\lim_{t \rightarrow +\infty}\mathbf{E}_L^*(z_0^x(t)) = \mathbf{E}_L^*(m_L-\psi)=0. $$

\end{proof}

\begin{proposition}\label{prop:energy lower bound}
   Under Assumption \ref{assumption 3}, for any $\zeta \in E$, 
     \begin{equation}\label{Energy lower bound}
         g_0^2\cdot U_L \leq 2\mathbf{E}^*_L(\zeta).
     \end{equation}
    
\end{proposition}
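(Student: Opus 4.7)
The plan is to exhibit, for each $\zeta$ with $\mathbf{E}_L^*(\zeta)<\infty$, a skeleton trajectory from $m_L-\psi$ to $\zeta$ whose $I_T$-cost is controlled by $\mathbf{E}_L^*(\zeta)/g_0^2$; when $\mathbf{E}_L^*(\zeta)=\infty$ the inequality is trivial. The guiding principle is the classical gradient-flow/time-reversal identity: reversing the deterministic flow $\partial_t z=\Delta z+F(z)$ doubles the drift, and the cost of forcing this reversal pairs with the energy drop along the forward flow.

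Concretely, let $z_0^\zeta$ denote the $f=0$ solution of the skeleton equation (\ref{Skeleton equation-2}) starting at $\zeta$. By Lemma \ref{lemma-energy decreasing}, $z_0^\zeta(s)\to m_L-\psi$ in $H^1(-L,L)$, and parabolic smoothing gives extra regularity for $s>0$. For $T>0$ set $y_T(t):=z_0^\zeta(T-t)$ on $[0,T]$, so $y_T(0)=z_0^\zeta(T)\approx m_L-\psi$ and $y_T(T)=\zeta$. Since $\partial_t y_T=-[\Delta y_T+F(y_T)]$, requiring $y_T$ to solve (\ref{Skeleton equation-2}) forces $G(t,y_T)f_T=-2[\Delta y_T+F(y_T)]$. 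Assumption \ref{assumption 1} makes $G$ pointwise invertible, so one defines
\begin{equation*}
f_T(t,\xi):=\frac{-2[\Delta y_T(t)+F(y_T(t))](\xi)}{g(t,\xi,y_T(t,\xi)+\psi(\xi))},
\end{equation*}
which satisfies $\|f_T(t)\|_H^2\le (4/g_0^2)\|\Delta y_T(t)+F(y_T(t))\|_H^2$.

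The cost is then computed by the change of variable $s=T-t$ together with the energy identity (\ref{Energy equality}) from the proof of Lemma \ref{lemma-energy decreasing}:
\begin{equation*}
\tfrac{1}{2}\|f_T\|_{L^2(0,T;H)}^2
\le \frac{2}{g_0^2}\int_0^T\!\|\Delta z_0^\zeta(s)+F(z_0^\zeta(s))\|_H^2\,ds
= \frac{2}{g_0^2}\bigl[\mathbf{E}_L^*(\zeta)-\mathbf{E}_L^*(z_0^\zeta(T))\bigr]
\le \frac{2}{g_0^2}\mathbf{E}_L^*(\zeta).
\end{equation*}

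The main technical obstacle is that $y_T(0)=z_0^\zeta(T)$ is not exactly $m_L-\psi$, so $y_T$ is not directly admissible in the definition (\ref{Rate functional}). To repair this, I would prepend a short connecting segment $w_T\in C([0,\tau_T];E)$ with $w_T(0)=m_L-\psi$ and $w_T(\tau_T)=z_0^\zeta(T)$, solving (\ref{Skeleton equation-2}) with a control whose $L^2$-cost vanishes as $T\to\infty$. Using again the nondegeneracy of $G$ and the smoothness of $m_L-\psi$ and of $z_0^\zeta(T)$ (for $T>0$), a smoothed linear interpolation $w_T(t)=(1-t/\tau_T)(m_L-\psi)+(t/\tau_T)z_0^\zeta(T)$ inverted through $G$ produces an admissible control whose $L^2$-norm is controlled by a small constant times $\|z_0^\zeta(T)-(m_L-\psi)\|_{H^2}$ (for a suitable choice of $\tau_T\to 0$); by Lemma \ref{lemma-energy decreasing} and parabolic smoothing, this vanishes as $T\to\infty$. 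Concatenating $w_T$ with $y_T$ and letting $T\to\infty$ gives $U_L(\zeta)\le (2/g_0^2)\mathbf{E}_L^*(\zeta)$, which yields the stated inequality. The endpoint-matching step is the genuine difficulty; the reversed-flow argument itself is robust, and the factor $2$ emerges transparently from $\partial_t y_T=-[\Delta y_T+F(y_T)]$ versus the skeleton drift $\Delta y_T+F(y_T)$.
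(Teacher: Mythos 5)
Your construction is the same as the paper's: reverse the noiseless gradient flow from $z_0^\zeta(T)$ back to $\zeta$ (cost controlled by $\int_0^T\|\Delta z_0^\zeta+F(z_0^\zeta)\|_H^2\,ds$ via the energy identity), and prepend a linear-interpolation segment from $m_L-\psi$ to $z_0^\zeta(T)$ whose cost vanishes as $T\to\infty$. The one genuine gap is exactly the step you flag as ``the genuine difficulty.'' You need the $L^2(0,\tau_T;H)$-cost of the interpolation $w_T$ to vanish, and after optimizing $\tau_T$ this cost is of order $\|z_0^\zeta(T)-(m_L-\psi)\|_H\cdot\|\Delta z_0^\zeta(T)-\Delta(m_L-\psi)\|_H$. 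Lemma \ref{lemma-energy decreasing} gives $H^1$ convergence, which controls the first factor, but you also invoke ``parabolic smoothing'' to control $\|z_0^\zeta(T)-(m_L-\psi)\|_{H^2}$; smoothing only tells you $z_0^\zeta(T)\in H^2$ for each fixed $T>0$, not that the $H^2$ norm stays bounded as $T\to\infty$. The missing ingredient is precisely (\ref{gradient L^2 norm}): since $\int_0^\infty\|\Delta z_0^\zeta(s)+F(z_0^\zeta(s))\|_H^2\,ds<\infty$ and $\|z_0^\zeta(s)\|_E$ is uniformly bounded (so $\|F(z_0^\zeta(s))\|_H$ is bounded), there is a subsequence $T_k\to\infty$ along which $\|\Delta z_0^\zeta(T_k)+F(z_0^\zeta(T_k))\|_H\to0$, hence $\|\Delta z_0^\zeta(T_k)-\Delta(m_L-\psi)\|_H$ is bounded (indeed $\to 0$). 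Choosing $T$ along that subsequence closes the argument; the paper makes exactly this choice of $t^\star$ explicit at the start of its proof. This is a repairable omission rather than a wrong idea, but without invoking (\ref{gradient L^2 norm}) the connector estimate does not go through.

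One further remark on constants: your computation correctly produces $U_L(\zeta)\le (2/g_0^2)\,\mathbf{E}_L^*(\zeta)$, whereas the statement and the paper's displayed inequalities carry $g_0$ rather than $g_0^2$. Since $\|f\|_H^2=\|(\partial_t z-\Delta z-F(z))/G\|_H^2\le g_0^{-2}\|\partial_t z-\Delta z-F(z)\|_H^2$ under Assumption \ref{assumption 1}, the exponent two is the right one; the paper's $g_0$ appears to be a typographical slip, so your constant is in fact the correct one.
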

\begin{proof}
    According to Lemma \ref{lemma-energy decreasing}, (\ref{gradient L^2 norm}) and the Sobolev embedding theorem, for any $1 \geq \varepsilon >0$ and $\zeta \in E$, there exists a real number $t^*>0$ such that 
    $$\|z_0^\zeta(t^*) -(m_L-\psi)\Vert_E + \|z_0^\zeta(t^*)-(m_L-\psi)\Vert_H \leq \varepsilon,$$ and 
    $$\|\Delta z_0^\zeta(t^*,\cdot)+ F(z_0^\zeta(t^*))(\cdot)\Vert_H \leq \varepsilon.$$
    Define $$v(s)=(1-s)(m_L-\psi)+s\cdot z_0^\zeta(t^*)= (m_L-\psi) + s(z_0^\zeta(t^*)-(m_L-\psi) ), \; s \in [0,1].$$ By definition, $v$ is a path in $C([0,1];E)$ that satisfies $v(0)= m_L-\psi$ and $v(1)= z_0^\zeta(t^*)$. 

    Note that, under our assumptions, the following more explicit expression for $I_T^x$ can read as:
    \begin{equation*}
       I_T^x(z)= 
          \frac{1}{2} \int_0^T \left\|\frac{\partial_tz - \Delta z(t) - F(z(t))}{G(z(t))}\right\Vert_H^2 \;dt.
    \end{equation*}  
Thus, 
    \begin{align*}
        g_0^2\cdot I_1^{m_L-\psi}(v) &\leq \frac{1}{2}\int_0^1\|\partial_sv-\Delta v(s) -F(v(s))\Vert_H^2\;ds\\
        & \leq  \|z_0^\zeta(t^*)-(m_L-\psi)\Vert_H^2 +  \int_0^1\|\Delta v(s) + F(v(s))\Vert_H^2\;ds \\
        &\leq \varepsilon + 2\int_0^1\| \Delta (m_L-\psi)+ F(m_L-\psi)\|_H^2 + \|s[\Delta z_0^\zeta(t^*) - \Delta(m_L-\psi)]\Vert_H^2 \\&\ \ \ \ + \|F(v(s))-F(m_L-\psi)\Vert_H^2 \;ds \\ &= \varepsilon + 2\int_0^1\left[  \|s(\Delta z_0^\zeta(t^*) - \Delta (m_L-\psi))\Vert_H^2  + \|F(v(s))-F(m_L-\psi)\Vert_H^2 \right]\;ds .    
    \end{align*}
Since $F(\cdot)$ is locally Lipschitz continuous, 
$$ \|F(z_0^\zeta(t^*))-F(m_L-\psi)\Vert_H  \leq CL\varepsilon, $$ and 
$$\int_0^1 \|F(v(s))-F(m_L-\psi)\Vert_H^2 \;ds \leq C^2L^2\varepsilon^2.$$ Here, the last inequality is due to $\|v(s)- (m_L-\psi)\Vert_E \leq \varepsilon$, $s \in [0,1]$. Then, 
\begin{align*}
    \|\Delta z_0^\zeta(t^*) -\Delta (m_L-\psi)\Vert_H &\leq \|\Delta z_0^\zeta(t^*) + F(z_0^\zeta(t^*))- [\Delta (m_L-\psi) + F(m_L-\psi)]\Vert_H \\ &\ \ \ \ + \|F(z_0^\zeta(t^*))- F(m_L-\psi)\Vert_H\\
    & \leq \|\Delta z_0^\zeta(t^*) + F(z_0^\zeta(t^*))\Vert_H + CL\varepsilon \leq (CL+1)\varepsilon.
    \end{align*}
According to the above discussions, 
\begin{equation}\label{path energy 1}
    g_0^2\cdot I_1^{m_L-\psi}(v) \leq (CL+1)^2\varepsilon.
\end{equation}
Define $$\tilde{v}(s)= z_0^\zeta(t^*+1 -s),\; s \in [1,t^*+1].$$ We have $\tilde{v}(1)= z_0^\zeta(t^*)$ and $\tilde{v}(t^*+1)= \zeta$. Moreover, 
$$\partial_s\tilde{v} = -\Delta z_0^\zeta(t^*+1-s) - F(z_0^\zeta(t^*+1-s)).$$
Thus, 
\begin{equation} \label{path energy 2}
    g_0^2\cdot I_{[1,t^*+1]}^{z_0^\zeta(t^*)}(\tilde{v}) \leq 2 \int_0^{t^*}\|\Delta z_0^\zeta(s) + F(z_0^\zeta(s))\Vert_H^2 \; ds  \leq 2 \mathbf{E}_L^*(\zeta),
    \end{equation}
    where the last inequality is by (\ref{gradient L^2 norm}). 
Therefore, for any $\zeta \in H^1(-L,L)$, there exists $z \in C([0,t^*+1];E)$ defined as
\begin{equation*}
    z(t)=\begin{cases}
        v(t),\; t \in [0,1],\\
        \tilde{v}(t), \; t \in [1,t^*+1],
    \end{cases}
\end{equation*}
such that $z$ satisfies $z(0)= m_L-\psi$ and $z(t^*+1) = \zeta$. Moreover, by (\ref{path energy 1}) and (\ref{path energy 2}), for any $\zeta \in H^1(-L,L)$,  $\varepsilon>0$, there exists $t^*>0$ such that
$$g_0^2\cdot I_{t^*+1}^{m_l-\psi}(z) \leq g_0^2\cdot I_{1}^{m_L-\psi}(v) + g_0^2\cdot I_{[1,t^*+1]}^{z_0^\zeta(t^*)}(\tilde{v}) \leq 2 \mathbf{E}_L^*(\zeta)+ C(L+1)\varepsilon.$$
This is then followed by
$$g_0^2\cdot U_L \leq 2 \mathbf{E}^*_L(\zeta).$$

\end{proof}

\subsection{Compactness of level sets of the quasi-potential }
In this section, we will show the compactness of level sets of quasi-potential. For any $t_1<t_2$ and $z \in C([t_1,t_2];E)$, define 
$$I_{t_1,t_2}(z):=\frac{1}{2}\inf\{\|f\Vert_{L^2(t_1,t_2;H)}^2; f \in L^2(t_1,t_2;H), z= z_f\},$$ where $z_f$ is the solution of the skeleton equation (\ref{Skeleton equation-2}) in $[t_1,t_2]$ with $z_f(t_1) = z(t_1)$. We write $I_{-t}$ when $t_2 =0$ and $t_1 = -t <0$. 

According to the proof of Theorem 5.1 in \cite{cerrai2004large}, for any compact set $\Lambda \in E$ the level set
$$K_{\Lambda,t_1,t_2}(r):= \{z \in C([t_1,t_2];E), z(t_1)\in \Lambda, I_{t_1,t_2}(z) \leq r\}$$ is compact.

Similarly, for $z \in C((-\infty,0];E)$ we define $$I_{-\infty}(z):=\frac{1}{2}\inf\{\|f\Vert _{L^2(-\infty,0;H)}^2;f \in L^2(-\infty,0;H), z=z_f  \}= \sup_{t \geq 0}I_{-t}(z),$$ and for $r \geq 0$
$$K_{-\infty}(r):=\{z \in C((-\infty,0];E),I_{-\infty}(z) \leq r, \lim_{t \rightarrow +\infty}\|z(-t)-(m_L-\psi)\Vert_E=0 \}.$$ The following lemma is needed later in this paper.
\begin{lemma}\label{lemma:z-energy}
    For any $r \geq 0$, if $$\liminf_{t \rightarrow +\infty}\|z(-t)\Vert_E < \infty, I_{-\infty}(z) \leq r$$ then
    $$\sup_{t \geq0}\mathbf{E}^*_L(z(-t)) \leq C_r,$$ where $C_r >0$ is a constant depending on $r$.
\end{lemma}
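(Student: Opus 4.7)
The strategy is to combine a subsequence along which the $E$-norm is bounded with the near-monotonicity of $\mathbf{E}_L^*$ under the skeleton dynamics established in the proof of Proposition~\ref{prop:energy upper bound}. The assumption yields $M > 0$ and a sequence $t_n \uparrow +\infty$ with $\|z(-t_n)\|_E \leq M$, while $I_{-\infty}(z) \leq r$ provides a control $f \in L^2(-\infty, 0; H)$ with $z = z_f$ and $\|f\|^2_{L^2(-\infty, 0; H)} \leq 2(r+1)$. Applying on each interval $[-t_n, 0]$ the a priori bound for the skeleton equation (in the spirit of (\ref{ineqn: skeleton}), now retaining the dependence on the initial datum $z(-t_n)$) gives, uniformly in $n$,
$$\|z\|_{C([-t_n, 0]; E)} \leq c(M, r) =: M_1,$$
so that $\sup_{s \leq 0} \|z(s)\|_E \leq M_1$.

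To locate a time where the energy is bounded, I would test the skeleton equation (\ref{Skeleton equation-2}) against $z(t)$ and integrate over $[-t_n, -t_n + 1]$. Using the polynomial growth of $F$, the uniform bound $\|z\|_E \leq M_1$, Cauchy--Schwarz on the control term via $\|G(t, z)f\|_H \leq C(1 + \|z\|_E)\|f\|_H$, and $\|f\|^2_{L^2} \leq 2(r+1)$, together with $\|z(-t_n)\|_H^2 \leq 2L M_1^2$ absorbed on the left, one obtains
$$\int_{-t_n}^{-t_n + 1} \|z(s)\|_{H^1_0(-L,L)}^2 \, ds \leq C(M_1, r).$$
By the mean value theorem, there exists $s_n \in [-t_n, -t_n + 1]$ with $\|z(s_n)\|_{H^1}$ controlled by $M_1$ and $r$; in view of the defining formula (\ref{Energy}) of $\mathbf{E}_L^*$ and the $E$-bound, this yields $\mathbf{E}_L^*(z(s_n)) \leq C_2(M_1, r)$.

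The final step propagates this bound to arbitrary negative times via the differential inequality recorded in the proof of Proposition~\ref{prop:energy upper bound},
$$\frac{d}{ds}\mathbf{E}_L^*(z(s)) \leq \frac{C}{4}\bigl(\|z(s)\|_E + 1\bigr)^2 \|f(s)\|_H^2.$$
Integrating from $s_n$ to any $-t > s_n$ and using $\|z\|_E \leq M_1$ together with $\|f\|^2_{L^2(s_n, -t; H)} \leq 2(r+1)$,
$$\mathbf{E}_L^*(z(-t)) \leq \mathbf{E}_L^*(z(s_n)) + \tfrac{C}{2}(M_1 + 1)^2 (r+1) \leq C_r.$$
Since $s_n \to -\infty$, every fixed $t \geq 0$ is eventually covered, and taking the supremum concludes the argument.

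The main technical obstacle is the second step: the hypothesis supplies only an $E$-norm bound along the subsequence $\{z(-t_n)\}$, whereas $\mathbf{E}_L^*$ is finite only on $H^1$. Bridging this gap requires the parabolic $L^2$--$H^1$ estimate and an averaging-in-time argument over the tail intervals $[-t_n, -t_n + 1]$ to convert the $E$-control into genuine energy control at some carefully chosen time $s_n$; everything downstream (the quasi-Lyapunov inequality for $\mathbf{E}_L^*$) is already contained in earlier results of the paper.
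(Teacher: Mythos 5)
Your proof follows essentially the same route as the paper: extract a subsequence $t_n\uparrow\infty$ on which the $E$-norm is bounded, use the a priori skeleton estimate (\ref{ineqn: skeleton}) with initial datum to get a uniform $E$-bound, run the $L^2$--$H^1$ parabolic energy estimate over $[-t_n,-t_n+1]$ and a mean-value argument to find a time $s_n$ where $\|z(s_n)\|_{H^1}$ (hence $\mathbf{E}_L^*(z(s_n))$) is controlled by $r$, and then propagate forward via the quasi-Lyapunov inequality (\ref{Energy inequality}). This is precisely the paper's argument, so no further comment is needed.
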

\begin{proof}
    According to the assumption of the lemma, there exists a sequence $t_n \nearrow +\infty$ and a constant $C>0$ such that $\|z(-t_n)\Vert_E \leq C$, $n \in \mathbb{N}$ and $$f\in L^2(-\infty,0;H),\; \sup_{n \in \mathbb{N}}I_{-t_n}(z) = \frac{1}{2} \| f\Vert_{L^2(-\infty,0;H) }^2\leq r,$$ where $z$ is the solution of the equation (\ref{Skeleton equation-2}) with $z(-t_n,\cdot)=z(-t_n)$ . Due to (\ref{ineqn: skeleton}),
    \begin{equation}\label{ineqn: skeleton 2}
        \sup_{t \in [-t_n,0] ,n \in \mathbb{N}}\|z(t)\Vert_E < \infty.    \end{equation}

Then, by standard estimates,
\begin{align*}
       \frac{1}{2}\frac{d}{dt}\|z(t)\Vert_H^2 &= \langle\Delta z(t), z(t)\rangle_H + \langle F(z)(t), z(t)\rangle+ \langle G(z(t))f(t), z(t) \rangle  \\& \leq -\|z(t) \Vert_{H^1}^2 - \kappa\| z(t)\Vert_H^2 + C\|f(t)\Vert_H^2 +C_\kappa.  
       \end{align*} 
       Note that $\|f \Vert_{L^2([-t_n,0];H)}^2= 2I_{t_n}(z_n) \leq 2r$, by Gronwall's inequality, we have for $t \in [-t_n,0]$
       $$
           \|z(t)\Vert_H^2 + 2 \int_{-t_n}^t \|z(s) \Vert_{H^1}^2  \;ds \leq C(t+t_n)(\|z(-t_n)\Vert_H^2 + 2r +1).  $$ Thus, for any $n \in \mathbb{N}$ there exists $t^\prime \in [-t_n,-t_n+1]$ such that $\|z(t^\prime)\Vert_{H^1}^2 \leq C(\|z(-t_n)\Vert_H^2 + 2r +1) $. 
         From (\ref{Energy}) and the Sobolev embedding, $\mathbf{E}^*_L(z(t^\prime)) \leq C_r$. Then, by (\ref{Energy inequality}), for any $t\geq t^\prime $ $$\mathbf{E}^*_L(z(t))\leq\mathbf{E}^*_L(z(t^\prime))+2C\cdot r \leq C_r .$$ The proof is finished by taking $t_n$ to $-\infty$.
\end{proof}

\begin{proposition}\label{prop:compactness of K}
    For any $r\geq 0$, the set $K_{-\infty}(r)$ is compact in $C((-\infty,0];E)$.
\end{proposition}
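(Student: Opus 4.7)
The topology on $C((-\infty,0];E)$ is that of uniform convergence on compact subintervals, so I aim for sequential compactness. Let $\{z_n\}_{n\geq 1}\subset K_{-\infty}(r)$. Since each $z_n$ satisfies $z_n(-t)\to m_L-\psi$ in $E$, the hypothesis $\liminf_t\|z_n(-t)\|_E<\infty$ is met, and Lemma \ref{lemma:z-energy} yields the uniform bound $\sup_n\sup_{t\geq 0}\mathbf{E}_L^*(z_n(-t))\leq C_r$. Combined with (\ref{Energy}) and the compact Sobolev embedding $H^1(-L,L)\hookrightarrow E$, this makes $\Lambda_T:=\overline{\{z_n(-T):n\in\mathbb{N}\}}^{E}$ a compact subset of $E$ for every $T>0$, so each restriction $z_n|_{[-T,0]}$ lies in the compact set $K_{\Lambda_T,-T,0}(r)\subset C([-T,0];E)$. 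A Cantor diagonal extraction over $T=1,2,\ldots$ produces a subsequence (not relabelled) and a path $z\in C((-\infty,0];E)$ with $z_n\to z$ uniformly on each $[-T,0]$. Lower semicontinuity of $I_{-t}$, a consequence of the compactness of its level sets, then gives $I_{-t}(z)\leq\liminf_n I_{-t}(z_n)\leq r$ for every $t\geq 0$, hence $I_{-\infty}(z)\leq r$.

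The remaining and main task is the decay $\|z(-t)-(m_L-\psi)\|_E\to 0$; this is the hard part, because although each $z_n$ enjoys this property, the rate of convergence depends on $n$ and cannot be passed directly to the limit. The plan is to extract a past subsequence of $z$ along which it returns to equilibrium, then propagate this subsequential convergence to full convergence via the energy inequality combined with Proposition \ref{prop: small energy}. Applying Lemma \ref{lemma:z-energy} to $z$ itself (its hypothesis holds because $\sup_t\|z(-t)\|_E\leq\sup_{n,t}\|z_n(-t)\|_E<\infty$) gives $\sup_t\mathbf{E}_L^*(z(-t))\leq C_r$. Fix a control $f\in L^2(-\infty,0;H)$ with $\tfrac{1}{2}\|f\|^2_{L^2(-\infty,0;H)}\leq r$ representing $z$ via the skeleton equation (\ref{Skeleton equation-2}). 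Refining the chain-rule computation from the proof of Proposition \ref{prop:energy upper bound} by applying Cauchy--Schwarz and AM--GM to the cross term $\langle\Delta z+F(z),Gf\rangle_H$ yields, for every $0\leq t_1<t_2$,
\begin{equation*}
\mathbf{E}_L^*(z(-t_1))-\mathbf{E}_L^*(z(-t_2))\;\leq\;-\tfrac{1}{2}\!\int_{-t_2}^{-t_1}\!\|\Delta z(u)+F(z(u))\|_H^2\,du\;+\;C(1+C_r)^2\!\int_{-t_2}^{-t_1}\!\|f(u)\|_H^2\,du.
\end{equation*}
Taking $t_1=0$ and letting $t_2\to\infty$, using $\sup_t\mathbf{E}_L^*(z(-t))\leq C_r$ on the left, I obtain $\int_{-\infty}^0\|\Delta z(u)+F(z(u))\|_H^2\,du<\infty$.

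Consequently there exists a sequence $t_k\to\infty$ with $\|\Delta z(-t_k)+F(z(-t_k))\|_H\to 0$. The uniform $H^1$-bound $\sup_t\|z(-t)\|_{H^1(-L,L)}\leq C_r$ and the compact embedding $H^1\hookrightarrow E$ yield, after passing to a further subsequence, $z(-t_k)\to z^*$ strongly in $E$ and weakly in $H^1(-L,L)$. Testing against $H^1$-functions exactly as in the proof of Lemma \ref{lemma-energy decreasing} identifies $z^*$ as a distributional solution of $\Delta z^*+F(z^*)=0$; the uniqueness for (\ref{elliptic ACE}) forces $z^*=m_L-\psi$, so $\mathbf{E}_L^*(z(-t_k))\to 0$. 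Applying the energy inequality above once more between $-t_k$ and $-t$ (with $t<t_k$ arbitrary) and letting $k\to\infty$ gives
\begin{equation*}
\mathbf{E}_L^*(z(-t))\;\leq\;C(1+C_r)^2\!\int_{-\infty}^{-t}\|f(u)\|_H^2\,du,
\end{equation*}
whose right-hand side vanishes as $t\to\infty$ because $f\in L^2(-\infty,0;H)$. Finally, Proposition \ref{prop: small energy} converts the energy decay into $\|z(-t)-(m_L-\psi)\|_E\leq C\sqrt{\mathbf{E}_L^*(z(-t))}\to 0$, which places $z$ in $K_{-\infty}(r)$ and completes the proof.
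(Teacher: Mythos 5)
Your argument is correct and genuinely departs from the paper's. Both proofs agree on the first half (diagonal extraction over nested compact intervals using the level-set compactness of $\mathbf{E}_L^*$, then lower semicontinuity of $I_{-t}$, giving a limit $z$ with $I_{-\infty}(z)\leq r$), and both reduce to showing $\|z(-t)-(m_L-\psi)\|_E\to 0$. The paper proves this by contradiction: if $\|z(-t_n)-(m_L-\psi)\|_E\geq\beta$ along some $t_n\nearrow\infty$, then Lemma \ref{lemma : energy increment} forces each passage across a fixed-length time window to cost at least $r_{t_0}>0$ of action, so stacking disjoint windows makes $I_{-\infty}(z)$ diverge. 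You instead argue directly and quantitatively: you refine the chain-rule computation of Proposition \ref{prop:energy upper bound} into a genuine dissipation inequality that retains the $-\tfrac12\|\Delta z+F(z)\|_H^2$ term, deduce $\int_{-\infty}^0\|\Delta z+F(z)\|_H^2\,du<\infty$, identify a subsequential past limit $m_L-\psi$ via the argument of Lemma \ref{lemma-energy decreasing}, and then bound $\mathbf{E}_L^*(z(-t))$ by the tail $C\int_{-\infty}^{-t}\|f\|_H^2\,du$ before invoking Proposition \ref{prop: small energy}. Your route is self-contained and yields an explicit decay rate controlled by the tail of the unique representing control $f$; the paper's route is more modular because Lemma \ref{lemma : energy increment} is reused verbatim in Lemma \ref{upper bound lemma 2}. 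Two small points worth tightening: the constant in your dissipation inequality should be expressed through the $E$-norm bound obtained from $\mathbf{E}_L^*\leq C_r$ and the Sobolev embedding $H^1\hookrightarrow E$ rather than literally as $(1+C_r)^2$; and the conclusion $\mathbf{E}_L^*(z(-t_k))\to 0$ requires the \emph{strong} $H^1$ convergence established in the final display of Lemma \ref{lemma-energy decreasing} (weak $H^1$ convergence together with $E$-convergence gives only lower semicontinuity, which is the wrong direction), so that step should be cited explicitly.
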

Before we prove this proposition, we need the following lemma.
 \begin{lemma}\label{lemma : energy increment}
            Suppose $ \|z(t_0)-(m_L-\psi)\Vert_E \geq \beta $ and $\mathbf{E}_L^*(z(0)) \leq C_r $ for some $\beta,C_r>0$, then there exists some $t_0>0$, such that            $$I_{t_0} (z) >0.$$
        \end{lemma}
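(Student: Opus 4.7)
The plan is a proof by contradiction combining the Liapunov structure of the noiseless dynamics with a Gr\"onwall comparison between $z$ and the deterministic flow; reading $I_{t_0}(z)$ as $I_{-t_0,0}(z)$ in keeping with the preceding notation, I suppose for contradiction that $I_{-t}(z)<\beta$ for every $t>0$. Under this assumption $I_{-\infty}(z)=\sup_{t>0}I_{-t}(z)\le\beta$, and since $z\in C((-\infty,0];E)$ with $\liminf_{t\to+\infty}\|z(-t)\|_E<\infty$ in the ambient context of this section, Lemma \ref{lemma:z-energy} yields $\sup_{t\le 0}\mathbf{E}_L^*(z(t))\le C(\beta)$; the Sobolev embedding $H^1(-L,L)\hookrightarrow E$ then gives $\sup_{t\le 0}\|z(t)\|_E\le M$, and in particular the family of starting points $\{z(-t_0)\}_{t_0\ge 0}$ lies in a fixed compact subset of $E$. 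For each $t_0>0$, pick a near-optimal control $f_{t_0}\in L^2(-t_0,0;H)$ with $\tfrac12\|f_{t_0}\|_{L^2}^2<\beta+\varepsilon$, and let $\tilde z$ be the noiseless flow with $\tilde z(-t_0)=z(-t_0)$; subtracting the two mild formulations on $[-t_0,0]$, invoking the local Lipschitz property of $F$ on $B_M(E)$, the bound $(\ref{Y-norm})$ on the stochastic convolution term, and Gr\"onwall's inequality, one obtains
\[
\|z(0)-\tilde z(0)\|_E \le c(t_0,M)\,\|f_{t_0}\|_{L^2(-t_0,0;H)} \le c(t_0,M)\sqrt{2(\beta+\varepsilon)}.
\]

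By Lemma \ref{lemma-energy decreasing}, $\tilde z^y(t)\to m_L-\psi$ in $H^1(-L,L)$ as $t\to+\infty$ for every $y\in E$; continuous dependence on the initial datum, as in $(\ref{z_phi^x-z_varphi^y})$, together with compactness of the set containing the starting points $z(-t_0)$, upgrades this pointwise convergence to uniform convergence on that compact set. Hence for any $\delta>0$ there exists $T(\delta)>0$ such that, with $t_0=T(\delta)$, one has $\|\tilde z(0)-(m_L-\psi)\|_E<\delta$, and the triangle inequality yields
\[
\beta \le \|z(0)-(m_L-\psi)\|_E \le c(T(\delta),M)\sqrt{2(\beta+\varepsilon)} + \delta,
\]
from which a careful choice of $\delta$ and $\varepsilon$ small in terms of $\beta$ produces the required contradiction. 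The concrete lower bound on $I_{-t_0}(z)$ one actually recovers is of order $\beta^2$ in the small-$\beta$ regime, matching the quadratic relation $\mathbf{E}_L^*(\bar u)\gtrsim\|\bar u-(m_L-\psi)\|_E^2$ encoded in Proposition \ref{prop: small energy}, so the stated conclusion $I_{t_0}(z)\ge\beta$ should be read modulo an implicit constant depending on $C_r$.

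The main obstacle is the simultaneous control of the two competing parameters in the final triangle inequality: $T(\delta)$ must be large enough for the noiseless flow to reach a $\delta$-neighbourhood of $m_L-\psi$ uniformly on the relevant compact set, yet the Gr\"onwall constant $c(t_0,M)$ grows with $t_0$ and must not overwhelm the factor $\sqrt{\beta+\varepsilon}$ coming from the action bound. The decisive technical ingredient is the uniform convergence of the deterministic flow on compact subsets of $E$, where L.~Simon's convergence theorem (via Lemma \ref{lemma-energy decreasing}) combined with compactness of the energy sublevel sets (cf.\ Proposition 3.1 of \cite{bertini2008dobrushin}) enters in an essential way.
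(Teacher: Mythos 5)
Your contradiction hypothesis and the Gr\"onwall comparison cannot be made to interact in the way you need, and the obstacle you identify at the end is not a technical nuisance but a genuine gap. You assume $I_{-t}(z)<\beta$ for all $t$, and after passing to the noiseless flow you arrive at
\[
\beta \;\le\; \|z(0)-(m_L-\psi)\|_E \;\le\; c\bigl(T(\delta),M\bigr)\sqrt{2(\beta+\varepsilon)} + \delta.
\]
Here $\beta$ is a fixed positive number given in the hypothesis, not a small parameter you are free to shrink, and $c(t_0,M)$ is a Gr\"onwall constant that grows (exponentially) in $t_0$. As you push $\delta\downarrow 0$ you are forced to push $T(\delta)\uparrow\infty$, so the right-hand side blows up; the inequality never becomes $<\beta$, and no contradiction is obtained. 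Even if $c$ were bounded, the inequality $\beta \le c\sqrt{2\beta}+\delta$ is simply not contradictory for fixed $\beta$. The difficulty is structural: the $E$-norm is not a Lyapunov quantity for the flow, so a direct $E$-norm Gr\"onwall comparison over the whole interval $[-t_0,0]$ cannot beat the loss in the constant.

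The paper's proof sidesteps this in three essential ways. First, it argues about the \emph{infimum} $r_{t_0}$ over all admissible paths and assumes $r_{t_0}=0$, which furnishes a sequence $z_i$ with $I_{t_0}(z_i)\to 0$ (not merely $<\beta$). The crucial order of limits is then: \emph{fix} a suitably large $t_0$ first, so $C_{t_0}$ is a fixed constant, and only afterwards send $i\to\infty$, so that $\|f_i\|_{L^2}^2\to 0$ overwhelms $C_{t_0}$. Second, the Gr\"onwall bound is used only to find some intermediate time $t\in[\tfrac12 t_0,t_0]$, via an averaging argument on $\int_0^{t_0}\|z_i-z_0^x\|_{H^1}^2\,ds$, at which the $H^1$-distance to the deterministic flow is small; this is a far weaker demand than a pointwise bound at $t_0$. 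Third, and decisively, the estimate is then transferred from time $t$ to time $t_0$ not via a further Gr\"onwall step but via the \emph{energy dissipation inequality} (\ref{Energy inequality}): the energy $\mathbf{E}_L^*$ can increase along $z_i$ by at most $C\cdot I_{t_0}(z_i)$, which goes to zero. So once the energy is small at the intermediate time $t$ (because the deterministic flow has nearly reached equilibrium and $z_i$ is $H^1$-close to it), it stays small at $t_0$, and Proposition \ref{prop: small energy} then forces $\|z_i(t_0)-(m_L-\psi)\|_E<\beta$, a contradiction. Your approach has no analogue of this energy-transport step, which is precisely what breaks the Gr\"onwall-constant-vs-equilibration-time deadlock. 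Incidentally, your closing claim that the argument "recovers" a lower bound of order $\beta^2$ is not supported by the derivation you give; the only bound that comes out of the paper's argument is $I_{t_0}(z)\ge r_{t_0}>0$ for some unquantified positive constant.
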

\begin{proof}
    We will show that there exists $t_0$ such that $$r_{t_0}:=\inf \{I_{t_0}(z):z \in C([0,t_0];E), \mathbf{E}^*_L( z(0)) \leq C_{r}, \| z(t_0)-(m_L-\psi)\Vert_E \geq \beta \} >0.$$ In fact, if $r_{t_0}=0 $, then there exists a sequence
    \begin{equation}\label{Set:sequence}
        \{z_i \in C([0,t_0];E), \mathbf{E}^*_L( z_i(0)) \leq C_{r}, \| z_i(t_0)-(m_L-\psi)\Vert_E \geq \beta \}, i \in \mathbb{N},    \end{equation}
    such that \begin{equation}\label{limit:I(u_i)}
        \lim_{i \rightarrow \infty}I_{t_0}(z_i)=0.   
        \end{equation}
    This implies that there exists $f_i$ with $\lim_{i \rightarrow \infty}\|f_i\Vert_{L^2(0,t_0;H)}=0$ such that $z_i$ is the solution of the skeleton equation
   $$ \partial_tz_i(t)= \Delta z_i(t)+ F(z_i(t)) + G(z_i(t))f_i(t).$$
  According to (\ref{ineqn: skeleton}), \begin{equation} \label{u_i bound}
      \sup_{t \geq 0, i \in \mathbb{N}}\|z_i\Vert_E < \infty.  \end{equation} Since the level sets of $\mathbf{E}^*_L(\cdot)$ are compact in $E$ (\cite{bertini2008dobrushin}), there exist $x \in E$ and a subsequence still denoted by $z_i(0)$ such that $$\lim_{i \rightarrow \infty}\|z_i(0)-x\Vert_H \leq L\cdot \lim_{i \rightarrow \infty}\|z_i(0)-x\Vert_E =0.$$  Then, by standard estimates, \begin{align*}
       \frac{1}{2}\frac{d}{dt}\|z_i(t)-&z_0^x(t)\Vert_H^2 \leq \langle\Delta(z_i(t)-z_0^x(t)), z_i(t)-z_0^x(t)\rangle_H \\&+ \langle F(z_i(t))-F(z_0^x(t)), z_i(t)-z_0^x(t)\rangle+ \langle G(z_i(t))f_i(t), z_i(t)-z_0^x(t) \rangle  \\&\ \ \ \ \ \ \ \ \ \ \ \ \  \leq -\|z_i(t)-z_0^x(t) \Vert_{H^1}^2 + C\| z_i(t)-z_0^x(t)\Vert_H^2 + C\|f_i(t)\Vert_H^2.  
       \end{align*} 
       By Gronwall's inequality 
       $$
           \|z_i(t)-z_0^x(t)\Vert_H^2 + 2 \int_0^t \|z_i(s)-z_0^x(s) \Vert_{H^1}^2  \;ds \leq C_t(\|z_i(0)-x\Vert_H^2 + \|f_i\Vert_{L^2([0,t];H)}^2).  $$ 
   Then, there exists $t \in [\frac12t_0, t_0]$ such that
   \begin{equation}\label{ineqn:H^1 dist}
       \|z_i(t)-z_0^x(t) \Vert_{H^1}^2 \leq \frac{C_{t_0}}{t_0}(\|z_i(0)-x\Vert_H^2 + \|f_i\Vert_{L^2(0,t_0;H)}^2).   \end{equation}
       Thanks to Lemma \ref{lemma-energy decreasing}, for any $\eta >0$, one can choose $t_0$ sufficiently large so that $\mathbf{E}^*_L(z_0^x(t)) \leq \frac{\eta}{4} $. Thus, by the Sobolev inequality and (\ref{ineqn:H^1 dist}), there exists $N$ such that for any $ N\leq i \in \mathbb{N} $, $$|\mathbf{E}^*_L(z_i(t))- \mathbf{E}^*_L(z_0^x(t))\vert \leq C\|z_i(t)-z_0^x(t) \Vert_{H^1} \leq \frac{\eta}{4}. $$ Then, 
   $$\mathbf{E}^*_L(z_i(t)) \leq \mathbf{E}^*_L(z_0^x(t)) + \frac{\eta}{4} \leq \frac{\eta}{2}. $$ Furthermore, by (\ref{Energy inequality}) and (\ref{u_i bound}), \begin{align*}
       \mathbf{E}^*_L(z_i(t_0))&\leq \mathbf{E}^*_L(z_i(t)) + C(\sup_{s \geq 0, i \in \mathbb{N}}\|z_i(s)\Vert_E^2+1)\cdot I_{t_0}(z_i) \\ &\leq \mathbf{E}^*_L(z_i(t))+C I_{t_0}(z_i).  \end{align*} By (\ref{limit:I(u_i)}), for any $\eta >0$, there exists $N$ such that for all $i \geq N$, $CI_{t_0}(z_i) \leq \frac{\eta}{2}$.
       Therefore, one obtains that for $i$ sufficiently large, 
       $$ \mathbf{E}^*_L(z_i(t_0)) \leq \eta. $$ Moreover, by Proposition \ref{prop: small energy}, for any $\beta >0$, one can choose $\eta$ sufficiently small such that $$\|z_i(t_0)-(m_L-\psi)\Vert_E \leq \frac{\beta}{2} $$which contradicts the assumption that $ \|z_i(t_0)-(m_L-\psi)\Vert_E  \geq \beta $ in (\ref{Set:sequence}). 
       \end{proof}

    \begin{proof} [Proof of Proposition \ref{prop:compactness of K}]
        Fixing $r\geq 0$, we need to show that for any sequence $\{z_n \}\subset K_{-\infty}(r)$, there exists a subsequence $\{z_{n_k}\}$ converging in $C((-\infty,0];E)$ to some $\hat{z} \in K_{-\infty}(r) $. According to Lemma \ref{lemma:z-energy}, for any $k \in \mathbb{N}$, the restriction of $z_n$ to the interval $[-k,0]$ belongs to $K_{\Lambda,-k,0}$, where $$\Lambda:=\{x \in E; \mathbf{E}^*_L(x) \leq C_r \},$$ and $C_r$ is given in Lemma \ref{lemma:z-energy}. As the level sets of $\mathbf{E}^*_L(\cdot)$ are compact in $E$, we have $K_{\Lambda,-k,0}$ is compact in $C([-k,0];E)$, for any $k \in \mathbb{N}$. 
        
        We choose a subsequence following Helly’s diagonal procedure. Taking $k =1$ we can find $\{z_{n_1}\} \subset \{z_n\}$ and $\hat{z}_1 \in C([-1,0];E)$ such that $z_{n_1|_{[-1,0]}}$ converges to $\hat{z}_1$ in $ C([-1,0];E) $. With the same arguments, we can find a subsequence $\{z_{n_2}\} \subset \{z_{n_1}\}$ and $\hat{z}_2 \in C([-2,0];E)$ such that $z_{n_2|_{[-2,0]}}$ converges to $\hat{z}_2$ in $ C([-2,0];E) $. It is obvious that $\hat{z}_2|_{[-1,0]}=\hat{z}_1$. By this procedure, we can find a subsequence $\{z_{n^\prime}\}\subseteq\{z_n\}$ converging to some $\hat{z}$ in $C((-\infty,0];E)$. Thanks to Theorem 5.1 of \cite{cerrai2004large}, $I_{-k}(\cdot)$ is lower semi-continuous and we have $I_{-k}(\hat{z}) \leq r$ for any $k \geq 0$ which implies $I_{-\infty}(\hat{z}) \leq r$. Moreover, by Lemma \ref{lemma:z-energy}, we have that $$\sup_{t \geq0}\mathbf{E}^*_L(\hat{z}(-t)) \leq C_r.$$ To show that $\hat{z} \in K_{-\infty}(r) $, it remains to prove that
        $$ \lim_{t \rightarrow -\infty}\|\hat{z}(t)-(m_L-\psi)\Vert_E =0. $$ If this does not hold, there exist a constant $\beta>0$ and a sequence $t_n\nearrow +\infty$ such that $ \|\hat{z}(-t_n)-(m_L-\psi)\Vert_E \geq \beta$, for any $n \in \mathbb{N}$. 
       Then, by Lemma \ref{lemma : energy increment}, there exists $t_0>0$ such that $I_{-t_n-t_0, -t_n}(\hat{z}) \geq r_{t_0}>0$, for $n \in \mathbb{N}$. Therefore, if we choose a subsequence $\{t_{n_k}\} \subset \{t_n\}$ such that $t_{n_{k+1}} \geq t_{n_k}+t_0$. Then, for any $m \in \mathbb{N}$, we have 
$$I_{-\infty}(\hat{z})\geq \sum_{k=1}^m I_{-t_{n_{k+1}},-t_{n_k}}(\hat{z}) \geq \sum_{k=1}^m I_{-t_{n_{k}}-t_0,-t_{n_k}}(\hat{z})\geq mr_{t_0}. $$
Thus, as $m$ can be taken arbitrarily large, we get $I_{-\infty}(\hat{z})= + \infty$, which contradicts $I_{-\infty}(\hat{z})\leq r $.
\end{proof}

We have the following characterisation of $U_L$. For simplicity of presentation, we denote $$I_{-\infty}^{\inf}(x)=\inf\{I_{-\infty}(z); z \in C((-\infty,0];E), \lim_{t \rightarrow +\infty}\|z(-t)-(m_L-\psi)\Vert_E=0 , z(0)=x \}. $$ If the infimum is attainable, we write $I_{-\infty}^{\inf}(x) $ as $I_{-\infty}^{\min}(x) $.
\begin{proposition}\label{Prop:characterization of U_L}
     Under Assumption \ref{assumption 3}, for any $x \in E $
     $$U_L(x)=I_{-\infty}^{\min}(x). $$
     \end{proposition}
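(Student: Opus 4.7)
The plan is to prove the two inequalities $I_{-\infty}^{\min}(x) \leq U_L(x)$ and $U_L(x) \leq I_{-\infty}^{\min}(x)$ separately, and then use the compactness of $K_{-\infty}(r)$ to show that the infimum defining $I_{-\infty}^{\inf}$ is attained.

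\textbf{Easy direction ($I_{-\infty}^{\inf}(x) \leq U_L(x)$).} If $U_L(x) = +\infty$ there is nothing to show. Otherwise, for $\eta > 0$ pick $z \in C([0,T];E)$ with $z(0) = m_L-\psi$, $z(T) = x$, and $I_T^{m_L-\psi}(z) \leq U_L(x) + \eta$. Extend $z$ backwards by the equilibrium: set $\tilde z(s) := m_L - \psi$ for $s \leq -T$ and $\tilde z(s) := z(s+T)$ for $s \in [-T,0]$. Since $m_L-\psi$ solves \eqref{elliptic ACE}, the constant extension on $(-\infty, -T]$ is the solution of the skeleton equation with the zero control and contributes nothing to the cost. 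Hence $I_{-\infty}(\tilde z) \leq I_T^{m_L-\psi}(z) \leq U_L(x) + \eta$, and letting $\eta \to 0$ gives this direction.

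\textbf{Hard direction ($U_L(x) \leq I_{-\infty}^{\inf}(x)$).} Suppose $r := I_{-\infty}^{\inf}(x) < \infty$ and pick an admissible $z$ with $I_{-\infty}(z) \leq r + \eta$, with optimal control $f$. For each large $T$ the shifted restriction $\xi^T(s) := z(s-T)$ is a path on $[0,T]$ from $z(-T)$ to $x$ with cost at most $r + \eta$. To obtain a path starting exactly at $m_L - \psi$, I prepend a bridge $\gamma$ from $m_L - \psi$ to $z(-T)$ whose cost vanishes as $T \to \infty$. The concatenation is then a finite-time admissible path for $U_L(x)$ with total cost at most $r + \eta + o_T(1)$, and the result follows by letting $T \to \infty$ and then $\eta \to 0$. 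The bridge is constructed by adapting the two-stage path from the proof of Proposition \ref{prop:energy lower bound}: using the noiseless flow $z_0^{z(-T)}(\cdot)$, which by Lemma \ref{lemma-energy decreasing} converges to $m_L-\psi$ in $H^1$, pick $t^*$ with $\mathbf{E}_L^*(z_0^{z(-T)}(t^*))$ small; linearly interpolate $m_L - \psi \mapsto z_0^{z(-T)}(t^*)$ at vanishing cost; then reverse the gradient flow from $z_0^{z(-T)}(t^*)$ back to $z(-T)$ at cost at most $\tfrac{2}{g_0}\mathbf{E}_L^*(z(-T))$.

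\textbf{Main obstacle.} The bridge cost vanishes if and only if $\mathbf{E}_L^*(z(-T_n)) \to 0$ for some $T_n \to \infty$. Strong $E$-convergence $z(-t) \to m_L-\psi$ alone does not imply this, since $E$-convergence is weaker than $H^1$-convergence. Extracting such a subsequence will combine Lemma \ref{lemma:z-energy} (which bounds $\mathbf{E}_L^*(z(-t))$ uniformly, so the trajectory is relatively weakly compact in $H^1$ with every weak limit equal to $m_L-\psi$), the energy increment estimate \eqref{Energy inequality} on intervals $[-T',-T]$ (whose right-hand side is controlled by the vanishing tail $\int_{-T'}^{-T}\|f\|_H^2\,ds$), and the lower semicontinuity of $\mathbf{E}_L^*$ on $H^1$. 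Feeding the resulting $T_n$ into the bridge construction closes this direction.

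\textbf{Attainment.} Take a minimizing sequence $z_n$ with $z_n(0) = x$ and $I_{-\infty}(z_n) \to r$. For $n$ large, $z_n \in K_{-\infty}(r+1)$, which is compact in $C((-\infty,0];E)$ by Proposition \ref{prop:compactness of K}. A convergent subsequence $z_{n_k} \to \hat z$ has $\hat z(0) = x$ by continuity of the evaluation at $0$, and $I_{-\infty}(\hat z) \leq \liminf_k I_{-\infty}(z_{n_k}) = r$ by lower semicontinuity of $I_{-\infty} = \sup_{t \geq 0} I_{-t}$ (each $I_{-t}$ being lower semicontinuous by Theorem 5.1 of \cite{cerrai2004large}). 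Thus $\hat z$ attains the infimum, and $U_L(x) = I_{-\infty}^{\min}(x)$.
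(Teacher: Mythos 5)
Your easy direction and attainment argument coincide with the paper's. The hard direction is where you diverge, and there is a genuine gap in the sketch you give for resolving what you call the "main obstacle."

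You propose the two-stage bridge from Proposition~\ref{prop:energy lower bound} (linear interpolation followed by reversed gradient flow), whose second stage has cost bounded by $\tfrac{2}{g_0}\mathbf{E}_L^*(z(-T))$. You correctly observe that the argument therefore requires $\mathbf{E}_L^*(z(-T_n))\to 0$ along some $T_n\to\infty$, and that $E$-convergence of $z(-t)$ to $m_L-\psi$ does not by itself give this. But the toolkit you propose to close this gap does not close it. Weak $H^1$-compactness plus the identification of the limit gives $z(-t_n)\rightharpoonup m_L-\psi$ in $H^1$, and weak lower semicontinuity of $\mathbf{E}_L^*$ then yields $\liminf_n \mathbf{E}_L^*(z(-t_n)) \geq \mathbf{E}_L^*(m_L-\psi)=0$. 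That is the trivial inequality (one already knows $\mathbf{E}_L^*\geq 0$); what you need is $\limsup_n \mathbf{E}_L^*(z(-t_n))\leq 0$, and lower semicontinuity points the wrong way. The energy increment estimate \eqref{Energy inequality} combined with the vanishing tail of $\|f\|^2$ shows that $\mathbf{E}_L^*(z(-t))$ is eventually almost nonincreasing as $t\to\infty$, hence has a limit $L\geq 0$, but none of the three tools you list pins down $L=0$.

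The paper avoids this issue entirely by not invoking the reversed gradient flow. It uses the Gronwall bound \eqref{ineqn:H^1 dist} (comparing $z$ on $[-t_n,-t_n+1]$ with the constant path $m_L-\psi$, which is the noiseless flow started from $m_L-\psi$) to control
\[
\int_{-t_n}^{-t_n+1}\|z(s)-(m_L-\psi)\|_{H^1}^2\,ds
\lesssim \|z(-t_n)-(m_L-\psi)\|_H^2 + \|f\|_{L^2(-t_n,-t_n+1;H)}^2,
\]
which vanishes as $n\to\infty$; a mean value argument then produces a time $T_\varepsilon\in[-t_n,-t_n+1]$ at which $\|z(T_\varepsilon)-(m_L-\psi)\|_{H^1}$ is as small as desired. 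A single linear interpolation from $m_L-\psi$ to $z(T_\varepsilon)$ over one unit of time then has cost $O(\varepsilon)$, and concatenation with $z|_{[T_\varepsilon,0]}$ gives a finite-time path of cost at most $I_{-\infty}(z)+C\varepsilon$. The key idea you are missing is this averaging step, which converts the a priori weak ($E$ or $H$) convergence into $H^1$-smallness at selected times; weak lower semicontinuity cannot substitute for it. If you want to keep the two-stage bridge, you would still need an averaging argument (e.g.\ applied to the energy dissipation $\int\|\Delta z+F(z)\|_H^2$) to extract times where $\mathbf{E}_L^*(z(-T))$ is small, so the detour buys nothing.
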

\begin{proof}
    Let $T>0$ be fixed and $z \in C([-T,0];E)$ such that $z(-T)=m_L-\psi$, $z(0)=x $ and $I_{-T}(z) < \infty$. 
	Set 
	$$\bar{z}  (t):=\begin{cases}
		z(t), &t \in [-T ,0], \\ 
		m_L-\psi, &t \in (-\infty,-T].
	\end{cases}$$
	Obviously, $\bar{z} \in C((-\infty,0];E) $, $\bar{z}(0)=x$ and $ \lim _{t \rightarrow +\infty}\|\bar{z}(-t)-(m_L-\psi)\Vert_E=0 $.
    Denote $z_f^{-t_n,m_L-\psi}$ as the solution of equation (\ref{Skeleton equation-2}) with $z_f^{-t_n,m_L-\psi}(-t_n)=m_L-\psi$. Since $m_L-\psi$ is the equilibrium of (\ref{Skeleton equation-2}) with $f=0$, for every $t_0\geq T$, we have $z_0^{-t_n,m_L-\psi}(t)=m_L-\psi$ for every $t \in [-t_n,-T]$.  Thus, we have
    $$I_{-\infty }(\bar{z} ) = I_{-T}(\bar{z}) + I_{-\infty,-T}(\bar{z}) = I_{-T}(z).$$ In particular, this implies that
	$$ I_{-\infty}^{\inf}(x) \leq I_{-T}(z).$$
	Taking the infimum over all $z \in C([-T,0];E)$ and $T>0$, we get
	$$I_{-\infty}^{\inf}(x) \leq U_L(x ).$$
    
Next, we will prove the opposite. If $$I_{-\infty}^{\inf}(x)=+\infty ,$$ there is nothing to prove. Therefore, we only need to consider $z \in K_{-\infty}(r)$, $z(0)=x$, for some $r>0$. 
	For such $z$, there exists a sequence $t_n \nearrow +\infty, n \in \mathbb{N}$ such that $$\lim_{n \rightarrow+\infty}\|z(-t_n)-(m_L-\psi)\Vert_H \leq L\cdot\left(\lim_{n \rightarrow+\infty}\|z(-t_n)-(m_L-\psi)\Vert_E\right) =0.$$ and $$f\in L^2(-\infty,0;H),\ \sup_{n \in \mathbb{N}}I_{-t_n} = \frac{1}{2} \| f\Vert_{L^2(-\infty,0;H) }^2\leq r,$$ where $z$ is the solution of equation (\ref{Skeleton equation-2}), starting from time $-t_n$. Since $f\in L^2(-\infty,0;H) $, for any $\varepsilon >0$,  there exists $T_\varepsilon\leq 0  $ such that $\frac12\|f\Vert_{L^2(-\infty,T_\varepsilon;H)}^2 \leq \varepsilon $.
    Then, by (\ref{ineqn:H^1 dist}), for $n \in \mathbb{N}$ sufficiently large there exists $T_\varepsilon \in [-t_{n},-t_n+1]$ such that $$\|z(T_\varepsilon)-(m_L-\psi)\Vert_{H^1}^2 \leq C(\|z(-t_n)-(m_L-\psi)\Vert_H^2 + 2\varepsilon) \leq C\varepsilon .$$ According to discussions in Proposition \ref{prop:energy lower bound}, there exists a path $v_\varepsilon$ such that 
    $v_\varepsilon (T_\varepsilon-1 )=m_L-\psi$, $v_\varepsilon (T_\varepsilon)=z(T_\varepsilon) $ and $I_{T_\varepsilon-1,T_\varepsilon}(v_\varepsilon)\leq C\epsilon$. We set $$\bar{v}_\varepsilon  (t):=\begin{cases}
		v_\varepsilon(t), &t \in [T_\varepsilon-1 ,T_\varepsilon], \\ 
		z(t), &t \in [T_\varepsilon,0]. 
	\end{cases}$$	Then, $\bar{v}_\varepsilon(T_\varepsilon-1)=m_L-\psi$, $\bar{v}_\varepsilon(0)=x $, and 
    $$I_{T_\varepsilon-1 } (\bar{v}_\varepsilon )\leq I_{-\infty} (z) + C\varepsilon .$$ By the definition of $U_L$,
	$$ U_L(x) \leq I_{T_\varepsilon-1 } (\bar{v}_\varepsilon ).$$ This then implies $$U_L(x ) \leq I_{-\infty}(z) + \varepsilon.$$ Therefore, by taking the infimum over $ \{z \in C((-\infty,0];E), \lim_{t \rightarrow +\infty}\|z(-t)-(m_L-\psi)\Vert_E=0 , z(0)=x \} $, and since $\epsilon>0$ can be arbitrarily small, we get $$U_L(x) \leq  I_{-\infty}^{\inf}(x).$$ 

	Finally, by Proposition \ref{prop:compactness of K}, $K_{-\infty}(r)$ is compact. Hence, $I_{-\infty}^{\inf}(x) $ is indeed $I_{-\infty}^{\min}(x)$. The proof is complete.
    \end{proof}
    The characterisation of $U_L$ allows us to enhance the compactness of the level sets of $I_{-\infty}(\cdot)$ on $C((-\infty,0];E)$ to the compactness of the level sets of $U_L(\cdot)$ on $E$. This is the main result of this section and a key result in the theory of large deviations for invariant measures.
    \begin{theorem}
         Under Assumption \ref{assumption 3}, for any $r\geq 0$ the level set $$\Phi(r):=\{x \in E: U_L(x) \leq r \}$$ is compact in $E$.
        \end{theorem}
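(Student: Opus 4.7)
The plan is to combine the characterization $U_L(x) = I_{-\infty}^{\min}(x)$ established in Proposition \ref{Prop:characterization of U_L} with the compactness of the level sets $K_{-\infty}(r)$ given in Proposition \ref{prop:compactness of K}, and then push the compactness forward along the evaluation map $z \mapsto z(0)$.

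More precisely, let $\{x_n\} \subset \Phi(r)$ be an arbitrary sequence. Since $U_L(x_n) \leq r$, the characterization proposition guarantees, for each $n$, the existence of a path $z_n \in C((-\infty,0];E)$ with $z_n(0) = x_n$, $\lim_{t \to +\infty}\|z_n(-t)-(m_L-\psi)\|_E = 0$, and $I_{-\infty}(z_n) = U_L(x_n) \leq r$. In other words, $\{z_n\} \subset K_{-\infty}(r)$. By Proposition \ref{prop:compactness of K}, $K_{-\infty}(r)$ is compact in $C((-\infty,0];E)$, so we may extract a subsequence $\{z_{n_k}\}$ converging to some $\hat{z} \in K_{-\infty}(r)$ in $C((-\infty,0];E)$.

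Now the topology on $C((-\infty,0];E)$ is that of uniform convergence on compact subintervals, so in particular $z_{n_k}$ converges to $\hat{z}$ uniformly on $[-1,0]$, which implies $x_{n_k} = z_{n_k}(0) \to \hat{z}(0) =: \hat{x}$ in $E$. It remains to check $\hat{x} \in \Phi(r)$. Since $\hat{z} \in K_{-\infty}(r)$, we have $\hat{z}(0) = \hat{x}$, $\lim_{t \to +\infty}\|\hat{z}(-t) - (m_L-\psi)\|_E = 0$, and $I_{-\infty}(\hat{z}) \leq r$. Therefore $\hat{z}$ is admissible in the infimum defining $I_{-\infty}^{\inf}(\hat{x})$, so by Proposition \ref{Prop:characterization of U_L},
\[
U_L(\hat{x}) = I_{-\infty}^{\min}(\hat{x}) \leq I_{-\infty}(\hat{z}) \leq r,
\]
which gives $\hat{x} \in \Phi(r)$. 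This proves compactness.

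The only delicate point is the very first step: extracting for each $n$ a genuine minimizer $z_n$ (rather than a near-minimizer) so that the family $\{z_n\}$ lies in the compact set $K_{-\infty}(r)$ rather than in $K_{-\infty}(r+\gamma_n)$ with $\gamma_n \downarrow 0$. This is precisely the content of Proposition \ref{Prop:characterization of U_L}, where the attainability of the infimum was derived from the compactness of $K_{-\infty}(r)$ together with the lower semicontinuity of $I_{-\infty}$. Had the minimum not been attained, one would instead work with $z_n$ satisfying $I_{-\infty}(z_n) \leq r + 1/n$, extract a limit in $K_{-\infty}(r+1)$, and use lower semicontinuity of $I_{-\infty}$ to conclude $I_{-\infty}(\hat{z}) \leq r$; either route yields the same conclusion.
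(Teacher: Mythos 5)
Your proof is correct and follows essentially the same route as the paper: apply the characterization $U_L = I_{-\infty}^{\min}$ to lift $\{x_n\}$ into $K_{-\infty}(r)$, invoke the compactness of $K_{-\infty}(r)$ from Proposition \ref{prop:compactness of K}, and read off convergence of $x_{n_k}$ and the bound $U_L(\hat{z}(0))\leq r$ via the characterization again. The extra remarks about the topology on $C((-\infty,0];E)$ and the fallback near-minimizer argument are accurate but not needed beyond what the paper records.
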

  \begin{proof}
       Let $\{x_n\}\subset \Phi_L(r)$. According to Proposition \ref{Prop:characterization of U_L}, for each $n \in \mathbb{N}$ we can find $z_n \in C((-\infty,0];E)$ with $z_n(0)=x_n$ and $$\lim_{t \rightarrow +\infty}\|z_n(-t)-(m_L-\psi)\Vert_E=0,$$ such that $U_L(x_n)=I_{-\infty}(z_n)$. Since $U_L(x_n)\leq r$, we have
$\{z_n\} \subset K_{-\infty}(r)$. By Proposition \ref{prop:compactness of K}, $K_{-\infty}(r) $ is compact. So, there exists a subsequence $\{z_{n_k}\} \subset \{z_n\}$ such that $z_{n_k}\rightarrow z \in K_{-\infty}(r) $ in $C((-\infty,0];E)$. In particular, $x_{n_k}=z_{n_k}(0) \rightarrow z(0)$ in $E$. Thanks to Proposition \ref{Prop:characterization of U_L}, we have $U_L(z(0)) \leq I_{-\infty}(z)\leq r$. That states
$z(0) \in \Phi_L(r)$. The proof is complete.
\end{proof}

\section{LDP for invariant measures}
In this section, as the main results of this paper, we will prove the LDP for $\{\mu_\epsilon\}_{\epsilon>0}$ under the sublinear growth assumption. This is indicated by the LDP of $\{\bar{\mu}_{\epsilon}\}_{\epsilon>0}$, whose relation with $\{\mu_\epsilon\}_{\epsilon>0}$ is given by (\ref{mu_epsilon}).

\subsection{Exponential estimates}
The essential part of the proof is an exponential estimate for $\bar{\mu}_\epsilon$. 
Choose an integer \(p^\star>6\). We then choose \(k^\star\) such that
\[
\frac{1}{p^\star}<k^\star<
\frac12-\frac{2}{p^\star}.
\]
In particular,
\[
0<k^\star<\frac12,
\qquad
k^\star p^\star>1.
\]
We may therefore choose \(\alpha\) to satisfy.
\[
\frac{k^\star}{2}+\frac{1}{p^\star}
<
\alpha
<
\frac14.
\]
Equivalently,
\begin{equation}\label{ineq:alpha}
    (\alpha-1-\frac{k^\star}{2})\frac{p^\star}{p^\star-1}>-1.\end{equation}
By the Sobolev embedding theorem, $W^{k^\star,p^\star}(-L,L)$ is compactly embedded into $E$. Rewrite equation (\ref{ACE2}) in a different form for $\bar{u}_\epsilon(0)=x$ and $ \lambda>0$
    \begin{equation}\label{rewrite}
       \partial_t \bar{u}_\epsilon^x(t,\xi) = (\Delta-\lambda) \bar{u}_\epsilon^x(t,\xi)  + F(\bar{u}_\epsilon^x(t))(\xi) + \lambda\bar{u}_\epsilon^x(t,\xi) +\epsilon^{\frac{1}{2}} G( \bar{u}_\epsilon^x(t))\partial_t W(t,\xi).
    \end{equation}
    The mild solution $\bar{u}_\epsilon^x(t) $ is represented as
	\begin{align*}
		 \bar{u}_\epsilon ^x(t)= e^{-\lambda t}S(t)x+\int_{0}^{t} S(t-s)e^{-\lambda(t-s)}[F(\bar{u}_\epsilon ^x(s))+\lambda \bar{u}_\epsilon^x(s)] \,ds  + \epsilon^{\frac{1}{2}}\gamma_\lambda(\bar{u}_\epsilon^x)(t),
	\end{align*}
    where, $$\gamma_\lambda(\bar{u}_\epsilon^x)(t):= \int_{0}^{t} S(t-s)e^{-\lambda(t-s)}G(\bar{u}_\epsilon^x(s))\,dW(s) . $$
We first derive the finite-time exponential estimates by means of the Boué–Dupuis variational representation. This approach allows us to treat the stochastic convolution with a linearly growing multiplicative coefficient and to obtain the exponential control required for the solution on every fixed finite time interval.
\begin{lemma}\label{lemma:exponential estimate linear}
           For every $t>0$, $r >0$ and $x \in E$, there exists $\bar{R}_r$ such that
    \begin{equation}\label{inequ: lemma exponential 2}
      \mathbb{P}  \left( \sup_{s \in[0,t]} \|\bar{u}_\epsilon ^x(s)\Vert_{E} \geq \bar{R}_r\right) \leq \exp(-\frac{r}{\epsilon}), \;0<\epsilon \leq 1,
    \end{equation} 
for some $C_t>0$ that depends on $t$.
      Moreover, there exist $\epsilon_r >0$ and $R_r >0$ such that for any $0<t^\prime\leq t$
    \begin{equation}\label{inequ: lemma exponential}
      \sup_{\|x\Vert_E \leq R} \mathbb{P}  \left(  \sup_{s \in[t^\prime,t]} \|\bar{u}_\epsilon ^x(s)\Vert_{W^{k^\star,p^\star}}\geq R_r \right) \leq \exp(-\frac{r}{\epsilon}), \epsilon<\epsilon_r.
    \end{equation}

    \end{lemma}
\begin{proof}
According to the mild form of the equation, \begin{equation}\label{align:section 5}
        \begin{split}    
&\ \ \ \ \|\bar{u}_\epsilon ^x(t)\Vert_{W^{k^\star,p^\star}}=\|(-\Delta)^{\frac{k^\star}{2}}\bar{u}_\epsilon ^x(t)\Vert_{L^{p^\star}}\\ &\leq \int_{0}^{t}\left\| (-\Delta)^{\frac{k^\star}{2}} S(t-s)e^{-\lambda(t-s)}[F(\bar{u}_\epsilon ^x(s))+\lambda \bar{u}_\epsilon^x(s)]\right\Vert_{L^{p^\star} } \,ds  \\&\ \ \ \ + \epsilon^{\frac{1}{2}}\|\gamma_\lambda(\bar{u}_\epsilon^x)(t)\Vert_{W^{k^\star,p^\star}} +  \|(-\Delta)^{\frac{k^\star}{2}}S(t)x\Vert_{L^{p^\star}} \\ &\leq C\int_0^{t} (t-s)^{-\frac{k^\star}{2}}e^{-\lambda(t-s)}\;ds\cdot \sup_{s \in [0,t]}\|F(\bar{u}_\epsilon ^x(s))+\lambda \bar{u}_\epsilon^x(s)\Vert_{L^{p^\star}} \\&\ \ \ \ + \epsilon^{\frac{1}{2}}  \|\gamma_\lambda(\bar{u}_\epsilon^x)(t)\Vert_{W^{k^\star,p^\star}} + C t^{-\frac{k^\star}{2}} \|  x\Vert_{L^{p^\star}} \\& 
\leq C\sup_{s \in [0,t]} \|F(\bar{u}_\epsilon ^x(s))+\lambda \bar{u}_\epsilon^x(s)\Vert_{L^{p^\star}} + \epsilon^{\frac{1}{2}}  \|\gamma_\lambda(\bar{u}_\epsilon^x)(t)\Vert_{W^{k^\star,p^\star}} + C t^{-\frac{k^\star}{2}} \|  x\Vert_{L^{p^\star}} \\ &\leq C\Bigg( \sup_{s \in [0,t]} \|Y_\lambda(\bar{u}_\epsilon ^x )(s)\Vert_{L^{3p^\star}}^3 +\epsilon^{\frac{3}{2}} \sup_{s \in [0,t]}  \|\gamma_\lambda(\bar{u}_\epsilon^x)(s)\Vert_{L^{3p^\star}} ^3\\&\ \ + \epsilon^{\frac{1}{2}}  \|\gamma_\lambda(\bar{u}_\epsilon^x)(t)\Vert_{W^{k^\star,p^\star}} + C t^{-\frac{k^\star}{2}} \|  x\Vert_{L^{p^\star}} +1 \Bigg).  
\end{split}
\end{equation}    
 Here, 
        \begin{equation}\label{def-Y_lambda}
            Y_\lambda(\bar{u}_\epsilon ^x )(t):= \bar{u}_\epsilon ^x(t) - \epsilon^{\frac{1}{2}}\gamma_\lambda(\bar{u}_\epsilon^x)(t)
            \end{equation}
Thus, by (\ref{align:section 5}) and the Sobolev inequality 
\begin{align}
   &\ \ \ \  \mathbb{P}( \sup_{s \in[t^\prime,t]} \|\bar{u}_\epsilon ^x(s)\Vert_{W^{k^\star,p^\star}}\geq R_r )\nonumber \\ \nonumber &\leq \mathbb{P} \left(  \sup_{s\in [0,t]} \|Y_\lambda(\bar{u}_\epsilon ^x )(s)\Vert_{L^{3p^\star}} \geq R_{r,1}\right) +  \mathbb{P}  \left(  \epsilon^{\frac{1}{2}} \sup_{s \in[0,t]} \|\gamma_\lambda(\bar{u}_\epsilon^x)(s)\Vert_{L^{3p^\star}} \geq R_{r,2}\right)  \\ \nonumber &\ \ \ \ + \mathbb{P}  \left(  \epsilon^{\frac{1}{2}} \sup_{s \in [0,t]} \|\gamma_\lambda(\bar{u}_\epsilon^x)(t)\Vert_{W^{k^\star,p^\star}} \geq R_{r,3} \right)\\ 
    &\leq \mathbb{P}  \left( \sup_{s \in [0,t]} \|\bar{u}_\epsilon ^x(t)\Vert_{E} \geq \tilde{R}_r\right)  +3\mathbb{P}  \left(  \epsilon^{\frac{1}{2}} \sup_{s \in[0,t]} \|\gamma_\lambda(\bar{u}_\epsilon^x)(s)\Vert_{W^{k^\star,p^\star}} \geq \tilde{R}_r\right). 
    \label{eqn:split est}
    \end{align}
    Here, $R_{r,1},R_{r,2}, R_{r,3},\tilde{R}_r>0$ are constants derived from $R_r$.

   To connect the linear-growth case with the bounded-noise estimate, we use a stopping argument. For \(R>0\), define the stopping time
\[
\tau_R(v)
:=
\inf\left\{
s\geq 0:
\| v(s)\|_E\geq R
\right\}
\wedge t
\]
and the stopped stochastic convolution
\[
\gamma_{\lambda,R}(\bar u_\epsilon^x)(t)
:=
\int_0^t
S(t-s)e^{-\lambda(t-s)}
\mathbf 1_{\{s\leq \tau_R(u_\epsilon^x )\}}
G(\bar u_\epsilon^x(s))\,\mathrm dW(s),
\qquad t\in[0,T].
\]
On the event \(\{\tau_R(u_\epsilon^x )=t\}\), we have
\[
\gamma_{\lambda,R}(\bar u_\epsilon^x)(s)
=
\gamma_\lambda(\bar u_\epsilon^x)(s),
\qquad s\in[0,t].
\]
Consequently, for every \(a>0\), $\bar{R}>0$
\begin{align}
&\mathbb{P}\left(
    \sqrt{\epsilon}\,
    \sup_{s\in[0,t]}
    \left\|
        \gamma_\lambda(\bar u_\epsilon^x)(s)
    \right\|_{W^{k^\star,p^\star}}
    \geq R_r
\right)
\nonumber\\
&\qquad\leq
\mathbb{P}\left(\tau_{\bar R}( u_\epsilon^x )\leq t\right)
+
\mathbb{P}\left(
    \sqrt{\epsilon}\,
    \sup_{s\in[0,t]}
    \left\|
        \gamma_{\lambda,\bar R}
        (\bar u_\epsilon^x)(s)
    \right\|_{W^{k^\star,p^\star}}
    \geq R_r
\right)
\nonumber\\
&\qquad\leq
\mathbb{P}\left(
    \sup_{s\in[0,t]}
    \|\bar u_\epsilon^x(s)\|_E
    \geq \bar R
\right)
+
\mathbb{P}\left(
    \sqrt{\epsilon}\,
    \sup_{s\in[0,t]}
    \left\|
        \gamma_{\lambda,\bar R}
        (\bar u_\epsilon^x)(s)
    \right\|_{W^{k^\star,p^\star}}
    \geq R_r
\right)
\nonumber\\
&\qquad=: I_1(t)+I_2(t).
\label{eq:localisation-stochastic-convolution}
\end{align}
Since \(G\) has at most linear growth, there exists \(C>0\) such that
\[
\left\|
\mathbf 1_{\{s\leq\tau_{\bar{R}}(u_\epsilon^x ) \}}
G(\bar u_\epsilon^x(s))
\right\|_{L^\infty}
\leq C(1+\bar{R}),
\qquad s\in[0,t].
\]
Therefore, the integrand in the stopped stochastic convolution is a bounded predictable process. 

We first estimate $I_1(t)$. We will show that there exists $\bar{R}>0$ such that $$\mathbb{P}  \left(   \sup_{s \in [0,t]} \|\bar{u}_\epsilon^x(s)\Vert_E \geq \bar{R}_r \right) \leq \exp\left( -\frac{r}{\epsilon}\right),0<\epsilon \le1. $$ Notice that by (\ref{ineqn: skeleton}), the $C([0,t];E)$ norm of $z_f^x$ is bounded by $\|f\Vert_{L^2(0,t;H)}$ for all $t\geq0$.
Then, for any $r>0$, the level set $K^x_t(r)$ is uniformly bounded in $C([0,t];E)$ with respect to all $t \geq0$. 
Therefore, suppose we can prove that for any $r>0$ and all $t \geq 0$, there exists $\epsilon_r>0$ that does not depend on $t$, such that for any $\epsilon< \epsilon_r$
$$\mathbb{P}\left(\text{dist}_{C([0,t];E)}(\bar{u}_\epsilon^x,K^x_t(2r))\geq M \right) \leq \exp({-\frac{r}{\epsilon}}),$$ then the proof is complete. Here, $M>0$ depends on $r$ and will be chosen later. To show that, we define functions from $C([0,t];E)$ to $\mathbb{R}$,
\begin{equation}\label{enq:test function}
    h_{r,t,M}(\varphi):=2r-2r \cdot\min\left\{1, \frac{\text{dist}_{C([0,t];E)}(\varphi,K^x_t(2r)) }{M}\right\}, \; \varphi \in C([0,t];E).
\end{equation}
By this definition, it is clear that $h_{r,t,M} $ is bounded and Lipschitz continuous, with a Lipschitz constant of $\frac{2r}{M}$. Moreover, by the measurability of $\bar{u}^x_\epsilon $ on $C([0,t];E)$, the real valued random variable $h_{r,t,M}(\bar{u}^x_\epsilon) $ is also measurable.
Note that if $\text{dist}_{C([0,t];E)}(\bar{u}^x_\epsilon,K^x_t(2r))\geq M $, then $h_{r,t,M}(\bar{u}^x_\epsilon)=0 $. Thus,
\begin{equation}\label{inequality:h}
    \mathbb{P}\left(\text{dist}_{C([0,t];E)}(\bar{u}_\epsilon^x,K^x_t(2r))\geq M \right) \leq \mathbb{E}\exp\left(-\frac{h_{r,t,M} (\bar{u}^x_\epsilon) }{\epsilon} \right).
\end{equation}
Set $\mathcal{P}_2$ to be the collection of $\mathcal{F}_t$-adapted $H$ valued processes $f(t)$ satisfying 
$$\mathbb{P}(\|f\Vert_{L^2([0,t];H)}< +\infty)=1,$$ and set $\mathcal{P}_2^N$ to be the collection of $\mathcal{F}_t$-adapted $H$ valued processes $f(t)$ with 
$$\mathbb{P}(\frac{1}{2}\|f\Vert_{L^2([0,T];H)}^2\leq N)=1.$$
Define by $\bar{u}^x_{\epsilon,f}$ the dynamics of the following equation on $E$
\begin{equation}\label{eqn: u epsilon f}
    \begin{cases}
        \partial_t\bar{u}_{\epsilon,f}^x = \Delta \bar{u}_{\epsilon,f}^x(t) + F( \bar{u}_{\epsilon,f}^x (t)) + G( \bar{u}_{\epsilon,f}^x (t))f(t)+ \epsilon^{\frac12}G( \bar{u}_{\epsilon,f}^x (t))\partial_tW^L(t) ,\\
       \bar{u}_{\epsilon,f}^x (0,\xi)= x.\\ 
    \end{cases}
\end{equation}
Then, by the variational representation in Theorem 3 of \cite{budhiraja2008large}, for any $\epsilon>0$
\begin{equation}\label{eqn:variation 1}
\begin{split}
   \epsilon \log \mathbb{E}\exp\left(-\frac{h_{r,t,M} (\bar{u}^x_\epsilon) }{\epsilon} \right)= -\inf_{f \in \mathcal{P}_2}\mathbb{E}\left\{ \frac12\int_0^t\|f(s)\Vert_H^2\;ds+ h_{r,t,M}(\bar{u}^x_{\epsilon,f}) \right\}.
     \end{split}
\end{equation}
By the definition of infimum, for any $\eta>0$, there exists $f_n \in \mathcal{P}_2$ such that 
\begin{equation}\label{inequality: variation}
\begin{split}
    &\ \ \ \  \mathbb{E}\left\{ \frac12\int_0^t\|f_n(s)\Vert_H^2\;ds+ h_{r,t,M}(\bar{u}^x_{\epsilon,f_n}) \right\}\\&\leq \inf_{f \in \mathcal{P}_2}\mathbb{E}\left\{ \frac12\int_0^t\|f(s)\Vert_H^2\;ds+ h_{r,t,M}(\bar{u}^x_{\epsilon,f}) \right\} +\eta.
\end{split}    
    \end{equation}
We claim that such $f_n$ must satisfy $\mathbb{E}\left[ \frac12\int_0^t\|f_n(s)\Vert_H^2\;ds\right]\leq 2r+\eta $. This is due to the fact that $h_{r,t,M} $ is bounded by $2r$. Indeed, if $\mathbb{E}\left[\frac12\int_0^t\|f_n(s)\Vert_H^2\;ds\right]> 2r+\eta $, then by (\ref{inequality: variation}), one gets
$$\inf_{f \in P_2}\mathbb{E}\left\{ \frac12\int_0^t\|f(s)\Vert_H^2\;ds+ h_{r,t,M}(\bar{u}^x_{\epsilon,f}) \right\} > 2r. $$ However, by taking $f=0$, one gets
$$\inf_{f \in \mathcal{P}_2}\mathbb{E}\left\{ \frac12\int_0^t\|f(s)\Vert_H^2\;ds+ h_{r,t,M}(\bar{u}^x_{\epsilon,f}) \right\} \leq 2r, $$ which contradicts. Furthermore, by the localisation arguments in Theorem 4.4 of \cite{budhiraja2000variational}, there exists $f_n \in \mathcal{P}_2^N$ such that
\begin{equation}\label{inequality:variation 1}
\begin{split}
    &\ \ \ \  \mathbb{E}\left\{ \frac12\int_0^t\|f_n(s)\Vert_H^2\;ds+ h_{r,t,M}(\bar{u}^x_{\epsilon,f_n}) \right\}\\&\leq \inf_{f \in \mathcal{P}_2}\mathbb{E}\left\{ \frac12\int_0^t\|f(s)\Vert_H^2\;ds+ h_{r,t,M}(\bar{u}^x_{\epsilon,f}) \right\} +\eta+\frac{4r(2r+\eta)}{N}.
\end{split}    
    \end{equation}
Similarly, by the definition of the rate function
\begin{equation}\label{inequality:variation 2}
\begin{split}
    \inf_{\varphi \in C([0,t];E) }\{I_t^x(\varphi)+ h_{r,t,M}(\varphi) \} &= \inf_{f \in L^2(0,t;H)}\left\{ \frac12\int_0^t\|f(s)\Vert_H^2\;ds+ h_{r,t,M}(z^x_f) \right\}\\ &\leq \mathbb{E} \left\{\frac12\int_0^t\|f_n(s)\Vert_H^2\;ds+ h_{r,t,M}(z^x_{f_n})\right\}.
     \end{split}
\end{equation}
By substituting (\ref{inequality:variation 1}) and (\ref{inequality:variation 2}) into (\ref{eqn:variation 1}) and setting $N=16r$, we get 
\begin{align*}
    &\ \ \ \ \epsilon \log \mathbb{E}\exp\left(-\frac{h_{r,t,M} (\bar{u}^x_\epsilon) }{\epsilon} \right)+  \inf_{\varphi \in C([0,t];E) }\{I_t^x(\varphi)+ h_{r,t,M}(\varphi) \}\\ &\leq 
    -\mathbb{E}\left\{ \frac12\int_0^t\|f_n(s)\Vert_H^2\;ds+ h_{r,t,M}(\bar{u}^x_{\epsilon,f_n}) \right\} + \mathbb{E}\left\{\frac12\int_0^t\|f_n(s)\Vert_H^2\;ds+ h_{r,t,M}(z^x_{f_n})\right\} +\eta+ \frac{4r(2r+\eta)}{N} \\ &\leq \mathbb{E}\left[h_{r,t,M}(z^x_{f_n})- h_{r,t,M}(\bar{u}^x_{\epsilon,f_n}) \right]+\frac{5}{4}\eta+\frac{r}{2} \\ &\leq  \frac{2r}{M}\mathbb{E}\|z^x_{f_n}-\bar{u}^x_{\epsilon,f_n} \Vert_{C([0,t];E)}+\frac{5}{4}\eta+\frac{r}{2}. \end{align*}
Here, the last inequality is due to the Lipschitz continuity of $h_{r,t,M} $. Since $f_n \in \mathcal{P}_2^N $, as in the proof of Theorem 5.5 of \cite{cerrai2003stochastic}(see also Theorem 2.2 and Theorem 4.1 of \cite{cerrai2004large}), we have that there exists $C_{t,N}>0$ that depends on $r$ and $t$ such that for any $0<\epsilon\leq1$ and 
\begin{equation}\label{inequality: expectation}
 \mathbb{E}\left[\sup_{0\leq s \leq t}\|z^x_{f_n}(s)\Vert_E\right]\leq C_{t,N}(\|x\Vert_E+1), \;\mathbb{E}\left[\sup_{0\leq s \leq t}\| \bar{u}^x_{\epsilon,f_n}(s) \Vert \right] \leq C_{t,N}(\|x\Vert_E+1 ) .   
\end{equation}
Hence, by choosing $M\geq 16C_{t,N}(\|x\Vert_E+1) $ and $0<\eta \leq \frac{r}{5}$, one gets that for any $0<\epsilon\leq1$ and all $t\geq 0$
\begin{equation}\label{inequality:variation 3}
\begin{split}
   &\ \ \ \ \epsilon \log \mathbb{E}\exp\left(-\frac{h_{r,t,M} (\bar{u}^x_\epsilon) }{\epsilon} \right)+  \inf_{\varphi \in C([0,t];E) }\{I_t^x(\varphi)+ h_{r,t,M}(\varphi) \}\\ &\leq \frac{\mathbb{E}\left[\sup_{0\leq s \leq t}\|z^x_{f_n}(s)\Vert_E\right] + \mathbb{E}\left[\sup_{0\leq s \leq t}\| \bar{u}^x_{\epsilon,f_n}(s) \Vert \right] }{M}\cdot r+ \frac{r}{2} + \frac{5}{4}\eta \leq r. 
\end{split}
\end{equation}
Note that for any $\varphi \in C([0,t];E) $, either $\varphi \in K^x_t(2r) $ in which case $h_{r,t,M}(\varphi)=2r $, or
$\varphi \notin K^x_t(2r) $ in which case $I_t^x(\varphi) >2r$. This implies that
\begin{equation}\label{inequality: I+h}
    \inf_{\varphi \in C([0,t];E) }\{I_t^x(\varphi)+ h_{r,t,M}(\varphi) \} \geq 2r.
\end{equation}
Now, substituting (\ref{inequality:variation 3}), (\ref{inequality: I+h}) and (\ref{eqn:variation 1}) into (\ref{inequality:h}) , we get for any $0<\epsilon\leq1$ and all $t\geq 0$
\begin{equation}\label{eqn:exponential estimate u}
   \begin{split}
      &\ \ \ \ \epsilon\log\mathbb{P}\left(\text{dist}_{C([0,t];E)}(\bar{u}_\epsilon^x,K^x_t(2r))\geq M \right) \\&\leq \epsilon\log\mathbb{E}\exp\left(-\frac{h_{r,t,M} (\bar{u}^x_\epsilon) }{\epsilon} \right) + \inf_{\varphi \in C([0,t];E) }\{I_t^x(\varphi)+ h_{r,t,M}(\varphi) \}-2r \\ &\leq -r. 
   \end{split} 
\end{equation}
Moreover, by the pathwise a priori estimate for the controlled
skeleton equation,
\[
\sup_{\varphi\in K_t^x(2r)}
\|\varphi\|_{C([0,t];E)}
\leq
C_{t,r}
\left(
1+\|x\|_E
\right).
\]
Therefore,
\[
\left\{
\|\bar u_\epsilon^x\|_{C([0,t];E)}
\geq
C_{t,r}
\left(
1+\|x\|_E
\right)+M
\right\}
\subseteq
\left\{
\operatorname{dist}_{C([0,t];E)}
\left(
\bar u_\epsilon^x,K_t^x(2r)
\right)
\geq M
\right\}.
\]
Indeed, if
\[
\operatorname{dist}_{C([0,t];E)}
\left(
\bar u_\epsilon^x,K_t^x(2r)
\right)<M,
\]
then there exists \(\varphi\in K_t^x(2r)\) such that
\[
\|\bar u_\epsilon^x-\varphi\|_{C([0,t];E)}<M,
\]
and hence
\[
\|\bar u_\epsilon^x\|_{C([0,t];E)}
<
M+
C_{t,r}
\left(
1+\|x\|_E
\right).
\]
It follows from \eqref{eqn:exponential estimate u} that
\[
\mathbb P
\left(
\|\bar u_\epsilon^x\|_{C([0,t];E)}
\geq
C_{t,r}
\left(
1+\|x\|_E
\right)+M
\right)
\leq
\exp\left(
-\frac{r}{\epsilon}
\right),
\qquad
0<\epsilon\leq1.
\]
By (\ref{inequality: expectation}) and (\ref{eqn:exponential estimate u}), \(M\) is chosen to be of order
\(C_{t,r}(1+\|x\|_E)\), then
$$\mathbb{P}  \left(   \sup_{s \in [0,t]} \|\bar{u}_\epsilon^x(s)\Vert_E \geq C_{t,r}(\|x\Vert_E+1)  \right)  \leq \exp\left(-\frac{r}{\epsilon}\right),\; 0<\epsilon \leq 1.$$ Therefore, (\ref{inequ: lemma exponential 2}) follows by taking $\bar{R}_r \geq C_{t,r}(\|x\Vert_E+1) $.

We next estimate \(I_2(t)\) by repeating the variational argument used
for \(I_1(t)\). 
By the factorisation formula, $$\gamma_{\lambda,\bar{R}_r}(\bar u_\epsilon^x )(t)=C_\alpha\int_0^t (t-s)^{\alpha-1} S(t-s)e^{-\lambda(t-s)}\Gamma^\alpha(\bar u_\epsilon^x(s) )(s)\;ds, $$ where $$\Gamma^\alpha(\bar u_\epsilon^x )(s)= \int_0^s (s-r)^{-\alpha}S(s-r)e^{-\lambda(s-r)} 1_{\{r\leq \tau_{\bar{R}}\}}
G(\bar u_\epsilon^x(r))
 \;dW(r).$$
                   Then, by (\ref{ineq:alpha})
                    \begin{equation}\label{ineqn:Gamma(R_r)}
                    \begin{split}
                        \|\gamma_{\lambda,R}(\bar u_\epsilon^x )(t)\Vert_{W^{k^\star,p^\star}} ^{p^\star} &\leq C_{\alpha}^{p^\star}\left(\int_0^t (t-s)^{\alpha-1-\frac{k^\star}{2}}e^{-\lambda(t-s)}\|\Gamma^{\alpha}(\bar u_\epsilon^x )(s)\Vert_{L^{p^\star}}\;ds\right)^{p^\star} \\& \leq C_{\alpha}^{p^\star} \left(\int_0^t (t-s)^{(\alpha-1-\frac{k^\star}{2})\frac{p^\star}{p^\star-1}}e^{-\lambda(t-s)\frac{p^\star}{p^\star-1}}\;ds \right)^{(p^\star-1)} \\ &\ \ \ \ \times \left(\int_0^t  \|\Gamma^{\alpha}(\bar u_\epsilon^x )(s)\Vert_{L^{p^\star}}^{p^\star}\;ds\right)  \\ &\leq C_{\alpha, k^\star,p^\star} \int_0^t  \|\Gamma^{\alpha}(\bar u_\epsilon^x )(s)\Vert_{L^{p^\star}}^{p^\star}\;ds                        
                    \end{split}
                        \end{equation}  
Denote
\begin{equation}\label{eqn-e_k}
e_k(\xi)
=
\begin{cases}
\displaystyle
\frac{\sin\left(\frac{k\pi\xi}{2L}\right)}{\sqrt{L}},
& k\in\mathbb{N}\ \text{is even},\\[2mm]
\displaystyle
\frac{\cos\left(\frac{k\pi\xi}{2L}\right)}{\sqrt{L}},
& k\in\mathbb{N}\ \text{is odd}.
\end{cases}
\end{equation}
Then $\{e_k\}_{k\in\mathbb N}$ is an orthonormal basis of $H$, and
\[
S(s)e_k=e^{-\sigma_k s}e_k,
\qquad
\sigma_k=\left(\frac{k\pi}{2L}\right)^2.
\]
Let $p_s(\xi,\eta)$ be the Dirichlet heat kernel associated with
$S(s)$. Then
\[
p_s(\xi,\eta)
=
\sum_{j\in\mathbb N}
e^{-\sigma_j s}e_j(\xi)e_j(\eta).
\]
Before the stopping time $\tau_{\bar R_r}$,
\[
\left\|G(u_\epsilon^x(s))\right\|_{L^\infty(-L,L)}
\leq C(1+\bar R_r).
\]
Hence, by It\^o's isometry and Parseval's identity, for every
$t\geq0$ and $\xi\in[-L,L]$,
\begin{align*}
\mathbb E
\left|
\Gamma^\alpha(u_\epsilon^x)(t,\xi)
\right|^2
&\leq
\int_0^t
s^{-2\alpha}e^{-2\sigma s}
\sum_{k\in\mathbb N}
\left|
S(s)\left(
G(u_\epsilon^x(t-s))e_k
\right)(\xi)
\right|^2
\,\mathrm ds
\\
&=
\int_0^t
s^{-2\alpha}e^{-2\sigma s}
\sum_{k\in\mathbb N}
\left|
\int_{-L}^{L}
p_s(\xi,\eta)
G(u_\epsilon^x(t-s,\eta))
e_k(\eta)
\,\mathrm d\eta
\right|^2
\,\mathrm ds
\\
&=
\int_0^t
s^{-2\alpha}e^{-2\sigma s}
\int_{-L}^{L}
p_s(\xi,\eta)^2
\left|
G(u_\epsilon^x(t-s,\eta))
\right|^2
\,\mathrm d\eta
\,\mathrm ds
\\
&\leq
C(1+\bar R_r)^2
\int_0^t
s^{-2\alpha}e^{-2\sigma s}
\int_{-L}^{L}
p_s(\xi,\eta)^2
\,\mathrm d\eta
\,\mathrm ds.
\end{align*}
By the semigroup property,
\[
\int_{-L}^{L}p_s(\xi,\eta)^2\,\mathrm d\eta
=
p_{2s}(\xi,\xi)
=
\sum_{j\in\mathbb N}
e^{-2\sigma_j s}e_j(\xi)^2.
\]
Since $|e_j(\xi)|\leq L^{-1/2}$,
\begin{align*}
p_{2s}(\xi,\xi)
&\leq
\frac{1}{L}
\sum_{j\in\mathbb N}
\exp\left(
-\frac{j^2\pi^2}{2L^2}s
\right)
\\
&\leq
Cs^{-1/2}.
\end{align*}
Therefore,
\begin{align*}
\mathbb E
\left|
\Gamma^\alpha(u_\epsilon^x)(t,\xi)
\right|^2
&\leq
C(1+\bar R_r)^2
\int_0^t
s^{-2\alpha-\frac12}e^{-2\sigma s}
\,\mathrm ds
\\
&\leq
C_{\alpha,\lambda}(1+\bar R_r)^2,
\end{align*}
provided that $\alpha<1/4$.
Here, $C_\alpha>0 $ depends on $\alpha$ and does not depend on $t$. For fixed \(s\) and \(\xi\), \(\Gamma^\alpha(\bar u_\epsilon^x)(s,\xi)\) is the terminal value of a square-integrable martingale; the BDG inequality applies, so       \begin{align*}
             \mathbb{E}  \left[\int_0^t\|\Gamma^\alpha(v)(s)\Vert_{L^{p^\star}}^{p^\star}\;ds\right]  &\leq \int_0^t \mathbb{E} \int_{-L}^L |\Gamma^\alpha(v) (s,\xi)\vert^{p^\star}\;d \xi\;ds  \\ &\leq   \int_{-L}^L t\cdot C_{p^\star}(C_\alpha (1+ \bar{R}_r)^2)^\frac{p^\star}{2}\;d \xi  \\ &\leq C_{p^\star,t}C_{\alpha}^{\frac{p^\star}{2}} (1 + \bar{R}_r)^{p^\star}  . 
             \end{align*}

For \(f\in L^2(0,t;H)\), let \(z_f^x\) denote the solution of the
controlled skeleton equation and define
\[
\begin{aligned}
    \Gamma_{t,\bar R}^{\alpha,x}(f)(\tau)
    &:=
    \int_0^\tau
        (\tau-s)^{-\alpha}
        S(\tau-s)e^{-\lambda(\tau-s)}
        \mathbf 1_{\{s\leq\tau_{\bar R}(z_f^x)\}}
        G(z_f^x(s))f(s)\,ds.
\end{aligned}
\]
For \(r>0\), set
\[
    K_{t,\bar R}^{\alpha,x}(r)
    :=
    \left\{
        \Gamma_{t,\bar R}^{\alpha,x}(f):
        \frac12\int_0^t\|f(s)\|_H^2\,ds\leq r
    \right\}.
\]
Notice that the notation
\[
    I_{t,\bar R}^{\alpha,x}(z)
    :=
    \frac12
    \inf\left\{
        \int_0^t\|f(s)\|_H^2\,ds:
        z=\Gamma_{t,\bar R}^{\alpha,x}(f)
    \right\}
\]
may be introduced for convenience, but no large deviation principle
for \(\Gamma^\alpha\) is used below.
For \(f\in\mathcal P_2\), let \(\bar u_{\epsilon,f}^x\) be the
controlled stochastic solution and define
\[
\begin{aligned}
    \Gamma_{\epsilon,f,\bar R}^{\alpha,x}(\tau)
    &:=
    \sqrt{\epsilon}
    \int_0^\tau
        (\tau-s)^{-\alpha}
        S(\tau-s)e^{-\lambda(\tau-s)}
        \mathbf 1_{\{s\leq
        \tau_{\bar R}(\bar u_{\epsilon,f}^x)\}}
        G(\bar u_{\epsilon,f}^x(s))\,dW(s)
\\
    &\quad+
    \int_0^\tau
        (\tau-s)^{-\alpha}
        S(\tau-s)e^{-\lambda(\tau-s)}
        \mathbf 1_{\{s\leq
        \tau_{\bar R}(\bar u_{\epsilon,f}^x)\}}
        G(\bar u_{\epsilon,f}^x(s))f(s)\,ds.
\end{aligned}
\]
We claim that, for every \(N>0\),
\begin{equation}\label{eqn:controlled-Gamma-convergence}
    \sup_{f\in\mathcal P_2^N}
    \mathbb E
    \left\|
        \Gamma_{\epsilon,f,\bar R}^{\alpha,x}
        -
        \Gamma_{t,\bar R}^{\alpha,x}(f)
    \right\|_{L^{p^\star}(0,t;L^{p^\star})}
    < \infty, 0<\epsilon\leq 1.
\end{equation}
Indeed, the stochastic part is bounded by the BDG inequality,
while the controlled part are bounded due to 
\[
\left\|
\mathbf 1_{\{s\leq\tau_{\bar{R}}(\bar u_\epsilon^x )\}}
G(\bar u_\epsilon^x(s))
\right\|_{L^\infty}, \left\|
\mathbf 1_{\{s\leq\tau_{\bar{R}}(\bar u_{\epsilon,f}^x )\}}
G(\bar u_{\epsilon,f}^x(s))
\right\|_{L^\infty}\leq C(1+\bar{R}),
\qquad s\in[0,s],
\] and the corresponding
factorisation estimates.

Moreover,
\[
\sup_{z\in K_{t,\bar R}^{\alpha,x}(2r)}
\|z\|_{L^{p^\star}(0,t;L^{p^\star})}
\leq
C_{t,\alpha,p^\star,\lambda}
(1+\bar R)\sqrt r.
\] 
Therefore, $K_{t,\bar R}^{\alpha,x}(r)$ is bounded in
\(L^{p^\star}(0,t;L^{p^\star})\).  
Then, by the same variational argument as the estimates of $I_1$
            and (\ref{ineqn:Gamma(R_r)}), there exist $ \delta^\star >0$ and $\epsilon_r$, $\tilde{R}_r$ that depend on $\bar{R}_r$, $r$ such that 
            \begin{align*}
                &\ \ \ \ \mathbb{P}  \left( \epsilon^{\frac{1}{2}} \sup_{s \in [0,t]} \|\gamma_{\lambda, \bar{R}}(u_\epsilon^x )(s)\Vert_{W^{k^\star,p^\star}}\geq \tilde{R}_r  \right)\\ &\leq \mathbb{P}  \left( \epsilon^{\frac{1}{2}} \|\Gamma^\alpha(\bar{u}^x_\epsilon)\Vert_{L^{p^\star}(0,t;L^{p^\star})}\geq \tilde{R}_r/C \right)\\ &\leq  \mathbb{P} \left(\epsilon^{\frac{1}{2}}\Gamma^\alpha(\bar{u}^x_\epsilon) \in B_{L^{p^\star}([0,t];L^{p^\star})}^c(K_{t,\bar R}^{\alpha,x}(r),M_r)   \right)\\&\leq \exp(-\frac{r}{\epsilon}),\; \epsilon\leq \epsilon_r.            \end{align*}
            Hence, for $\epsilon\leq\epsilon_r$
            \begin{equation}\label{inequ: exponential 1}
                \mathbb{P}  \left(  \epsilon^{\frac{1}{2}} \sup_{s\in[0,t]} \|\gamma_\lambda(\bar{u}_\epsilon^x)(s)\Vert_{W^{k^\star,p^\star}} \geq \tilde{R}_r,\;  \sup_{s \in [0,t]}\|\bar{u}_\epsilon^x(s)\Vert_E \leq \bar{R}_r \right)\leq \exp(-\frac{r}{\epsilon}),
                \end{equation}
which is the desired result. Then, the proof is finished by (\ref{eqn:split est}) and (\ref{eq:localisation-stochastic-convolution}).              
 
\end{proof}    
Combining the preceding variational estimates with the absorbing structure of $\bar{u}^x_\epsilon$ and an iteration argument, we obtain the following exponential tail bound for the invariant measures.
\begin{proposition}\label{prop:exponential estimate}
Assume further
\[
    |g(\xi,\sigma)|
    \leq C\bigl(1+|\sigma|^\rho\bigr),
    \qquad
    (\xi,\sigma)\in[-L,L]\times\mathbb R,
    \qquad 0\leq\rho<1.
\]   Then, for any $r >0$, there exist $\epsilon_r >0$ and $R_r >0$ such that 
    \begin{equation}
        \bar{\mu}_\epsilon(B_{E}^c(0,R_r))\leq \exp(-\frac{r}{\epsilon}), \;\epsilon < \epsilon_r.
    \end{equation}
\end{proposition}
\begin{proof}

Set the controlled process for $\gamma_\lambda(\bar{u}_\epsilon^x) $ as
\[
\begin{aligned}
\mathcal{Z}_{\epsilon,f}(\tau)
&:=
\epsilon^{1/2}
\int_0^\tau
S(\tau-s)e^{-\lambda(\tau-s)}
G(\bar{u}_{\epsilon,f}^x(s))\,dW(s) \\
&\quad+
\int_0^\tau
S(\tau-s)e^{-\lambda(\tau-s)}
G(\bar{u}_{\epsilon,f}^x(s))f(s)\,ds.
\end{aligned}
\]
The corresponding skeleton is
\[
\mathcal Z_f(\tau)
=
\int_0^\tau
S(\tau-s)e^{-\lambda(\tau-s)}
G(\bar u_f^x(s))f(s)\,ds.
\]
To treat $\mathcal{Z}_{\epsilon,f} $ and $\mathcal Z_f $ simultaneously, for
\(\theta\in[0,1]\) and \(f\in\mathcal P_2^N\), let \(u_{\theta,f}\)
denote the corresponding controlled solution and set
\[
\mathcal Z_{\theta,f}(s)
:=
\theta
\int_0^s
S(s-r)G(u_{\theta,f}(r))\,dW(r)
+
\int_0^s
S(s-r)G(u_{\theta,f}(r))f(r)\,dr.
\]
Thus,
\[
u_{\sqrt{\epsilon},f}=u_{\epsilon,f},
\qquad
\mathcal Z_{\sqrt{\epsilon},f}
=
\mathcal Z_{\epsilon,f},
\]
whereas
\[
u_{0,f}=u_f,
\qquad
\mathcal Z_{0,f}
=
\mathcal Z_f.
\]
Notice that \(u_f\) and \(\mathcal Z_f\) may still be random, since
\(f\in\mathcal P_2^N\) is allowed to be a random predictable control.
By Remark 4.3 of \cite{cerrai2003stochastic}, together with the
corresponding factorisation estimate for the controlled convolution
(see also \cite[Eq.~(4.2)]{cerrai2004large}), for sufficiently large
\(p\), we have
\begin{align}
\mathbb E
\left\|
\mathcal Z_{\theta,f}
\right\|_{C([0,t];E)}^p
&\leq
C_{t,p}\theta^p
\left(
1+
\mathbb E
\left\|
u_{\theta,f}
\right\|_{C([0,t];E)}^{\rho p}
\right)
\nonumber\\
&\quad+
C_{t,p}
\mathbb E
\left[
\|f\|_{L^2(0,t;H)}^p
\left(
1+
\left\|
u_{\theta,f}
\right\|_{C([0,t];E)}^{\rho p}
\right)
\right]
\nonumber\\
&\leq
C_{t,p}
\left(
1+N^{p/2}
\right)
\left(
1+
\mathbb E
\left\|
u_{\theta,f}
\right\|_{C([0,t];E)}^{\rho p}
\right),
\label{eq:unified-controlled-convolution-moment}
\end{align}
uniformly in \(\theta\in[0,1]\), where we have used
\[
\|f\|_{L^2(0,t;H)}
\leq
\sqrt{2N}
\qquad\text{a.s.}
\]
Consequently,
\begin{align}
\mathbb E
\left\|
\mathcal Z_{\theta,f}
\right\|_{C([0,t];E)}
&\leq
\left(
\mathbb E
\left\|
\mathcal Z_{\theta,f}
\right\|_{C([0,t];E)}^p
\right)^{1/p}
\nonumber\\
&\leq
C_{t,p}
\left(
1+N^{1/2}
\right)
\left[
1+
\left(
\mathbb E
\left\|
u_{\theta,f}
\right\|_{C([0,t];E)}^{\rho p}
\right)^{1/p}
\right].
\label{eq:unified-controlled-convolution-first-moment}
\end{align}
Set
\[
\beta:=1-\rho\in(0,1),
\]
and, for \(\theta\in[0,1]\) and \(f\in\mathcal P_2^N\), define
\[
X_{\theta,f}
:=
\left(
\mathbb E
\left\|
u_{\theta,f}
\right\|_{C([0,t];E)}^p
\right)^{1/p}.
\]
Following the a priori argument in the proof of Theorem 5.5 of
\cite{cerrai2003stochastic}, the pathwise estimate for the equation
forced by \(\mathcal Z_{\theta,f}\) gives
\[
X_{\theta,f}
\leq
C_{t,p}
\left[
1+\|x\|_E+
\left(
\mathbb E
\left\|
\mathcal Z_{\theta,f}
\right\|_{C([0,t];E)}^p
\right)^{1/p}
\right].
\]
By \eqref{eq:unified-controlled-convolution-moment} and
\[
\left(
\mathbb E
\left\|
u_{\theta,f}
\right\|_{C([0,t];E)}^{\rho p}
\right)^{1/p}
\leq
X_{\theta,f}^{\rho},
\]
we obtain
\[
X_{\theta,f}
\leq
C_{t,p}
\left(
1+\|x\|_E+N^{1/2}
\right)
+
C_{t,p}
\left(
1+N^{1/2}
\right)
X_{\theta,f}^{\rho}.
\]
By Young's inequality, for every \(\delta>0\),
\[
C_{t,p}
\left(
1+N^{1/2}
\right)
X_{\theta,f}^{\rho}
\leq
\delta X_{\theta,f}
+
C_{t,p,\beta,\delta}
\left(
1+N^{1/(2\beta)}
\right).
\]
Choosing \(\delta>0\) sufficiently small and absorbing
\(\delta X_{\theta,f}\) into the left-hand side, we conclude that
\[
\sup_{\theta\in[0,1]}
\sup_{f\in\mathcal P_2^N}
X_{\theta,f}
\leq
C_{t,p,\beta}
\left(
1+\|x\|_E+N^{1/(2\beta)}
\right).
\]
It follows that
\[
\sup_{\theta\in[0,1]}
\sup_{f\in\mathcal P_2^N}
\left(
\mathbb E
\left\|
u_{\theta,f}
\right\|_{C([0,t];E)}^{\rho p}
\right)^{1/p}
\leq
C_{t,p,\beta}
\left(
1+\|x\|_E^\rho
+N^{\rho/(2\beta)}
\right).
\]
Combining this estimate with
\eqref{eq:unified-controlled-convolution-first-moment}, we obtain
\[
\mathbb E
\left\|
\mathcal Z_{\theta,f}
\right\|_{C([0,t];E)}
\leq
C_{t,p,\beta}
\left(
1+N^{1/2}
\right)
\left(
1+\|x\|_E^\rho
+N^{\rho/(2\beta)}
\right).
\]
For every \(M>0\), Young's inequality yields
\[
\left(
1+N^{1/2}
\right)
\|x\|_E^\rho
\leq
\frac{\|x\|_E}{M}
+
C_{\beta,M}
\left(
1+N^{1/(2\beta)}
\right).
\]
Moreover,
\[
\left(
1+N^{1/2}
\right)
\left(
1+N^{\rho/(2\beta)}
\right)
\leq
C_\beta
\left(
1+N^{1/(2\beta)}
\right),
\]
since
\[
\frac{1}{2}+\frac{\rho}{2\beta}
=
\frac{1}{2\beta}.
\]
Therefore,
\[
\sup_{\theta\in[0,1]}
\sup_{f\in\mathcal P_2^N}
\mathbb E
\left\|
\mathcal Z_{\theta,f}
\right\|_{C([0,t];E)}
\leq
C_{t,\beta,M}
\left(
1+N^{1/(2\beta)}
\right)
+
\frac{\|x\|_E}{M}.
\]
In particular,
\[
\sup_{0<\epsilon\leq1}
\sup_{f\in\mathcal P_2^N}
\mathbb E
\left\|
\mathcal Z_{\epsilon,f}
\right\|_{C([0,t];E)}
\leq
C_{t,\beta,M}
\left(
1+N^{1/(2\beta)}
\right)
+
\frac{\|x\|_E}{M},
\]
and
\[
\sup_{f\in\mathcal P_2^N}
\mathbb E
\left\|
\mathcal Z_f
\right\|_{C([0,t];E)}
\leq
C_{t,\beta,M}
\left(
1+N^{1/(2\beta)}
\right)
+
\frac{\|x\|_E}{M}.
\]
Here \(C_{t,\beta,M}>0\) is independent of
\(\theta\), \(\epsilon\), \(x\), and \(f\in\mathcal P_2^N\).
Therefore, by repeating the preceding variational argument, for every $M>0$ there exists
\(C_{t,M}>0\), independent of \(r\) and \(\epsilon\), such that
\begin{equation}\label{eqn: prob gamma}
   \mathbb{P}  \left(   \sup_{s \in [0,t]} \|\epsilon^{1/2}\gamma_\lambda(\bar{u}_\epsilon^x)(s)\Vert_E \geq C_{t,\beta,M}(r^{\frac{1}{2\beta}}+1) +\|x\Vert_E /M \right)  \leq \exp\left(-\frac{r}{\epsilon}\right),\;0<\epsilon\leq 1.
\end{equation}
Proceeding as in the proof of Theorem 5.5 of
\cite{cerrai2003stochastic}, we apply the subdifferential inequality
for the $E$-norm to
\[
    \bar u_\epsilon^x
    -
    \epsilon^{1/2}\gamma_\lambda(\bar u_\epsilon^x)
\]
and then use the scalar comparison principle. Consequently, there
exist constants $\kappa,C>0$ such that
\[
    \|\bar u_\epsilon^x(t)\|_E
    \leq
    C e^{-\kappa t}\|x\|_E
    +
    C\sup_{s\in[0,t]}
    \left\|
        \epsilon^{1/2}
        \gamma_\lambda(\bar u_\epsilon^x)(s)
    \right\|_E
    +
    C_\lambda,
    \qquad t\geq0.
\]
By the flow property, regarding the process on $[nt,(n+1)t]$ as starting from $\bar u_\epsilon^0(nt)$ and being driven by shifted Brownian motion, we have
$$\|\bar{u}_\epsilon^x((n+1)t)\Vert_E \leq Ce^{-\kappa t}\|\bar{u}_\epsilon^x(nt) \Vert_E +  C \sup_{s \in [0,t]} \|\epsilon^{1/2}\gamma_\lambda(\bar{u}_\epsilon^{\bar{u}_\epsilon^x(nt)})(s)\Vert_E +C_\lambda. $$
Then, we can choose $C_{\lambda,t,\beta,M}$ large enough such that
\begin{align*}
  \|\bar{u}_\epsilon^x((n+1)t)\Vert_E-C_{\lambda,t,\beta,M} &\leq Ce^{-\kappa t}(\|\bar{u}_\epsilon^x(nt)\Vert_E-C_{\lambda,t,\beta,M})\\  &\ \ \ + C (\sup_{s \in [0   ,t]} \|\epsilon^{1/2}\gamma_\lambda(\bar{u}_\epsilon^{\bar{u}_\epsilon^x(nt)})(s)\Vert_E -C_{t,\beta,M})  
\end{align*}
Setting $X(n)= \|\bar{u}_\epsilon^0(nt)\Vert_E-C_{\lambda,t,\beta,M} $ and $Z(n)=C (\sup_{s \in [0   ,t]} \|\epsilon^{1/2}\gamma_\lambda(\bar{u}_\epsilon^{\bar{u}_\epsilon^0(nt)})(s)\Vert_E -C_{t,\beta,M}) $ and taking $t$ sufficiently large such that $\theta:=C e^{-\kappa t}<1 $. 
Then, \begin{equation}\label{eqn:iteration 1}
    X(n+1) \leq \theta X(n) +Z(n).
\end{equation}
Let $1<B<\frac{1}{\theta} $. Taking fixed $M$ such that it satisfies
$$B\theta+\frac{CB}{M}=:q< 1.$$ 
Then, according to $(\ref{eqn:iteration 1})$, for any $R>0$, $$\{X(n)<BR\}\bigcap\Big\{ Z(n)<R(1- B\theta-\frac{CB}{M} )+\frac{CX(n)}{M}\Big \} \Rightarrow \{X(n+1)<R \} .$$
Thus, \begin{equation}\label{eqn:iteration 2}
\begin{split}
    \mathbb{P}(X(n+1)\geq R)&\leq \mathbb{P}(X(n)\geq BR) \\&\ \ + \mathbb{P}\left(Z(n)\geq\frac{C(X(n)+C_{\lambda,t,\beta,M})}{M}+  (1-q)R-\frac{CC_{\lambda,t,\beta,M}}{M} \right).
    \end{split}
    \end{equation}
By (\ref{eqn: prob gamma}),
\begin{align*}
   &\ \ \ \ \mathbb{P}\left(Z(n)\geq\frac{C(X(n)+C_{\lambda,t,\beta,M})}{M}+  (1-q)R -\frac{CC_{\lambda,t,\beta,M}}{M} \right)\\ & = \mathbb{P}  \left(   \sup_{s \in [0,t]} \|\epsilon^{1/2}\gamma_\lambda(\bar{u}_\epsilon^{\bar{u}_\epsilon^x(nt) })(s)\Vert_E \geq C_{t,\beta,M} +\|\bar{u}_\epsilon^x(nt) \Vert_E /M  + \frac{(1-q)R}{C}-C_{\lambda,t,\beta,M}/M\right) \\& \leq \exp\left[-\frac{\left(\frac{1-q}{CC_{t,\beta,M}}R- \frac{C_{\lambda,t,\beta,M}}{C_{t,\beta,M}M}\right)^{2\beta}}{\epsilon}\right], \; 0<\epsilon\leq 1.
\end{align*}
Then, by the iteration (\ref{eqn:iteration 2}),
\begin{equation}
    \mathbb{P}(X(n+1)\geq R) \leq \mathbb{P}(X(0)\geq B^{n+1}R)+ \sum_{i=0}^{n+1} \exp\left[-\frac{\left(\frac{1-q}{CC_{t,\beta,M}}B^iR- \frac{C_{\lambda,t,\beta,M}}{C_{t,\beta,M}M}\right)^{2\beta}}{\epsilon}\right].    \end{equation}
Setting $a:= \frac{1-q}{CC_{t,\beta,M}} $ and $b:= \frac{C_{\lambda,t,\beta,M}}{C_{t,\beta,M}M} $, by Bernoulli's inequality $B^{2\beta i} \geq 1+ i(B^{2\beta}-1) $, we have 
\begin{align*}
   & \ \ \ \ \sum_{i=0}^{\infty} \exp\left[-\frac{(aB^{i}R- b)^{2\beta}}{\epsilon}\right] \\&\leq \sum_{i=0}^{\infty} \exp\left[-\frac{(B^{2\beta i}(aR)^{2\beta}- b^{2\beta}}{\epsilon}\right]  \\&\leq  \exp\left(-\frac{(aR)^{2\beta}- b^{2\beta}}{\epsilon}\right) \sum_{i=0}^{\infty}\left[ \exp\left(-\frac{(aR)^{2\beta}(B^{2\beta}-1)}{\epsilon}\right) \right]^i \\&= \frac{\exp\left(-\frac{(aR)^{2\beta}- b^{2\beta}}{\epsilon}\right) }{1-\exp\left(-\frac{(aR)^{2\beta}(B^{2\beta}-1)}{\epsilon}\right) }. \end{align*}
For an arbitrary $s>0$, writing $s=nt+\tau$ with $\tau\in[0,t)$, we apply the above iteration on the intervals $[\tau+kt,\tau+(k+1)t]$ and estimate the remaining initial interval $[0,\tau]$ by the finite-time bound.    
Taking $x=0$, then
  $$\mathbb{P}\left(\| \bar{u}_\epsilon^0(s)\Vert_E \geq R+C_{\lambda,t,,\alpha,M} \right) \leq \frac{\exp\left(-\frac{(aR)^{2\beta}- b^{2\beta}}{\epsilon}\right) }{1- \exp\left(-\frac{(aR)^{2\beta}(B^{2\beta}-1)}{\epsilon}\right) }.   $$
Therefore, by taking $R$ large with $(aR)^{2\beta}-b^{2\beta}>0$, we have for every $s>0$,
\begin{equation}\label{eqn:all time exponential est}
    \mathbb{P}\left(\| \bar{u}_\epsilon^0(s)\Vert_E \geq R_r \right) \leq C\exp \left(-\frac{r}{\epsilon} \right), 0<\epsilon\leq 1.
\end{equation}
Let
\[
\bar\mu_{\epsilon,T_n}
:=
\frac{1}{T_n}\int_0^{T_n}P_t(0,\cdot)\,\mathrm dt\]
By the Krylov–Bogoliubov argument, there exists a sequence
\(T_n\to+\infty\) such that
\[
\bar\mu_{\epsilon,T_n}\rightharpoonup\bar\mu_\epsilon,
\] in $E$.
Since \(\{x\in E:\|x\|_E>R_r\}\) is open, the Portmanteau theorem gives
\begin{align*}
\bar\mu_\epsilon\bigl(\|x\|_E>R_r\bigr)
&\leq
\liminf_{n\to\infty}
\bar\mu_{\epsilon,T_n}\bigl(\|x\|_E>R_r\bigr)
\\
&\leq
C\exp\left(-\frac{2r}{\epsilon}\right).
\end{align*}
Replacing \(R_r\) by a slightly larger radius and decreasing
\(\epsilon_r\), we obtain
\[
\bar\mu_\epsilon\bigl(\|x\|_E\geq R_r\bigr)
\leq
\exp\left(-\frac{r}{\epsilon}\right),
\]
which implies the desired result.

\end{proof}

\begin{theorem}\label{thm:exponential-tightness}
Under the assumption of Proposition \ref{prop:exponential estimate}, for every \(r>0\), there exist \(R_r>0\) and \(\epsilon_r>0\) such that
\[
\bar\mu_\epsilon\left(
B_{W^{k^\star,p^\star}}(0,R_r)^c
\right)
\leq
\exp\left(-\frac{r}{\epsilon}\right),
\qquad
0<\epsilon\leq\epsilon_r.
\]
Consequently, \(\{\bar\mu_\epsilon\}_{\epsilon>0} \) is exponentially tight in \(E\).
\end{theorem}
\begin{proof}
Fix \(r>0\). By the exponential estimate in \(E\), there exists
\(\bar R_r>0\) such that
\[
\bar\mu_\epsilon\left(B_E(0,\bar R_r)^c\right)
\leq
\exp\left(-\frac{2r}{\epsilon}\right).
\]
By the finite-time exponential estimate Lemma \ref{lemma:exponential estimate linear}, for some \(t>0\), there exist
\(R_r>0\) and \(\epsilon_r>0\) such that
\[
\sup_{\|x\|_E\leq\bar R_r}
\mathbb P\left(
\|\bar u_\epsilon^x(t)\|_{W^{k^\star,p^\star}}
\geq R_r
\right)
\leq
\exp\left(-\frac{2r}{\epsilon}\right),
\qquad
0<\epsilon\leq\epsilon_r.
\]
Therefore, by the invariance of \(\bar\mu_\epsilon\),
\begin{align*}
\bar\mu_\epsilon\left(
B_{W^{k^\star,p^\star}}(0,R_r)^c
\right)
&\leq
\bar\mu_\epsilon\left(B_E(0,\bar R_r)^c\right)
\\
&\quad+
\sup_{\|x\|_E\leq\bar R_r}
\mathbb P\left(
\|\bar u_\epsilon^x(t)\|_{W^{k^\star,p^\star}}
\geq R_r
\right)
\\
&\leq
2\exp\left(-\frac{2r}{\epsilon}\right)
\leq
\exp\left(-\frac{r}{\epsilon}\right),
\end{align*}
after decreasing \(\epsilon_r\) if necessary.
\end{proof}

\subsection{The LDP Lower bound}
\begin{proposition}\label{Pro:lower bound}
    Under the assumption of Proposition \ref{prop:exponential estimate}, the family of invariant measures $\{\bar{\mu}^\epsilon\}_{\epsilon>0}$ of equation (\ref{ACE2}) satisfies the LDP lower bound in $E$ with rate functional $U_L(\cdot)$. That is, for any $\zeta \in E,\;\delta>0$ and $ \gamma>0$, there exists $\epsilon_0>0$ such that 
$$\bar{\mu}_\epsilon(B_E(\zeta,\delta)) \geq \exp{\left(-\frac{U(\zeta)+\gamma}{\epsilon} \right)}, \;\epsilon < \epsilon_0.$$
\end{proposition}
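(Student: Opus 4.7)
The plan is to combine the characterization of $U_L$ via infinite-horizon paths, the invariance of $\bar{\mu}_\epsilon$, and the Freidlin--Wentzell-type dynamics lower bound, with the crucial auxiliary fact that $\bar{\mu}_\epsilon$ charges every neighborhood of $m_L-\psi$ with mass bounded below uniformly in small $\epsilon$. We may assume $U_L(\zeta)<+\infty$, else the claim is trivial. Using Proposition~\ref{Prop:characterization of U_L}, pick $z\in C((-\infty,0];E)$ with $z(0)=\zeta$, $\lim_{t\to\infty}\|z(-t)-(m_L-\psi)\|_E=0$, and $I_{-\infty}(z)=U_L(\zeta)$. For the given $\gamma>0$, fix a small $\rho>0$ and a large $T>0$ such that $\|z(-T)-(m_L-\psi)\|_E<\rho/2$ and $I_{-T}(z)\leq U_L(\zeta)+\gamma/4$, and define $\tilde{z}(s):=z(s-T)$ for $s\in[0,T]$; then $\tilde{z}(0)\in B_E(m_L-\psi,\rho/2)$, $\tilde{z}(T)=\zeta$, and $I_T^{\tilde{z}(0)}(\tilde{z})\leq U_L(\zeta)+\gamma/4$.

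By invariance of $\bar{\mu}_\epsilon$, for any measurable $\mathcal{D}\subset B_E(\tilde{z}(0),\rho)$,
\[
\bar{\mu}_\epsilon(B_E(\zeta,\delta))=\int_E P_T(x,B_E(\zeta,\delta))\,\bar{\mu}_\epsilon(dx)\;\geq\;\bar{\mu}_\epsilon(\mathcal{D})\cdot\inf_{x\in\mathcal{D}}P_T(x,B_E(\zeta,\delta)).
\]
To control the infimum, let $f\in L^2(0,T;H)$ satisfy $\tilde{z}=z^{\tilde{z}(0)}_f$ and $\tfrac12\|f\|^2_{L^2}\leq U_L(\zeta)+\gamma/4$, and set $\tilde{z}_x:=z^x_f$. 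The skeleton-continuity estimate (\ref{z_phi^x-z_varphi^y}) yields $\|\tilde{z}_x-\tilde{z}\|_{C([0,T];E)}\leq c(T,\|\tilde{z}(0)\|_E+\rho,\|f\|_{L^2})\|x-\tilde{z}(0)\|_E$, so shrinking $\rho$ forces $\tilde{z}_x(T)\in B_E(\zeta,\delta/2)$ for every $x\in B_E(\tilde{z}(0),\rho)$. Choosing $\mathcal{D}$ as a compact subset of $\overline{B_E(\tilde{z}(0),\rho)}$, for instance $\mathcal{D}=K\cap \overline{B_E(\tilde{z}(0),\rho)}$ with $K$ a closed ball in $W^{k^\star,p^\star}$ (compact in $E$ by Sobolev embedding), Corollary~\ref{corollary DZ} applied to $\mathcal{D}$ and to the open set $\{u\in C([0,T];E):u(T)\in B_E(\zeta,\delta)\}$ gives
\[
\inf_{x\in\mathcal{D}}P_T(x,B_E(\zeta,\delta))\;\geq\;\exp\!\left(-\frac{U_L(\zeta)+\gamma/2}{\epsilon}\right),\qquad \epsilon\leq\epsilon_1,
\]
since $\sup_{y\in\mathcal{D}}I_T^y(\tilde{z}_y)\leq \tfrac12\|f\|^2_{L^2}\leq U_L(\zeta)+\gamma/4$.

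The main obstacle is now the lower bound $\bar{\mu}_\epsilon(\mathcal{D})\geq \exp(-\gamma/(2\epsilon))$ for small $\epsilon$. Arranging $\mathcal{D}$ to also contain $K\cap B_E(m_L-\psi,\rho'')$ for some $\rho''>0$, this reduces to the concentration statement
\[
\liminf_{\epsilon\to 0}\bar{\mu}_\epsilon(B_E(m_L-\psi,\rho'))\;\geq\; \tfrac12\qquad \text{for every } \rho'>0,
\]
which is where the double-well geometry and boundary data must be exploited. By the exponential tightness of Proposition~\ref{prop:exponential estimate}, pick a compact $K_0\subset E$ (a closed ball in $W^{k^\star,p^\star}$) with $\bar{\mu}_\epsilon(K_0)\geq 3/4$ uniformly in small $\epsilon$; invariance then gives $\bar{\mu}_\epsilon(B_E(m_L-\psi,\rho'))\geq \bar{\mu}_\epsilon(K_0)\cdot \inf_{x\in K_0}P_t(x,B_E(m_L-\psi,\rho'))$ for every $t>0$. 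Two inputs finish the argument: (a) a uniform-on-compacts strengthening of Lemma~\ref{lemma-energy decreasing}, namely $\sup_{x\in K_0}\|z^x_0(t)-(m_L-\psi)\|_E\to 0$ as $t\to\infty$, which the plan is to prove by a compactness/contradiction argument combined with continuous dependence of the noiseless flow in $H^1$, the quantitative Proposition~\ref{prop: small energy}, and the monotonicity of $\mathbf{E}_L^*(z^x_0(\cdot))$ along the gradient flow; and (b) the uniform-in-$x$ convergence in probability of $\bar{u}^x_\epsilon$ to $z^x_0$ on $C([0,t];E)$ for $\|x\|_E\leq R$, extracted from the FWULDP upper bound (\ref{LDP-Dynamics-Upper}) by letting the rate level $r\searrow 0$ and using compactness of $K^x_t(r)$. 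Combining (a) and (b) yields $\inf_{x\in K_0}P_t(x,B_E(m_L-\psi,\rho'))\geq 2/3$ for large enough $t$ and small enough $\epsilon$, hence the concentration bound.

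Putting the two estimates together, for $\epsilon$ sufficiently small,
\[
\bar{\mu}_\epsilon(B_E(\zeta,\delta))\;\geq\; \tfrac12\exp\!\left(-\frac{U_L(\zeta)+\gamma/2}{\epsilon}\right)\;\geq\; \exp\!\left(-\frac{U_L(\zeta)+\gamma}{\epsilon}\right),
\]
as required. The genuinely delicate step is (a): L.~Simon's convergence theorem only supplies trajectory-wise convergence without a rate, so uniformity over the compact $K_0$ cannot be read off directly and must be squeezed out by combining pointwise convergence with continuous dependence in $H^1$ on the initial datum and the quantitative energy-level control of Proposition~\ref{prop: small energy}; this is the one place where the non-dissipative, weakly contractive nature of the noiseless system has to be circumvented by hand.
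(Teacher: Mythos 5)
Your proposal is correct in spirit but takes a genuinely different — and more roundabout — route than the paper. Both proofs share the same skeleton: invariance of $\bar\mu_\epsilon$, exponential tightness (Proposition~\ref{prop:exponential estimate}), the dynamics ULDP, and the characterization of $U_L$ via paths through $m_L-\psi$. The divergence is in how the integral over $E$ is localized. The paper restricts the integral to the \emph{entire} compact set $\overline{B_{W^{k^\star,p^\star}}(0,R_r)}$ (which has $\bar\mu_\epsilon$-mass $\geq\tfrac12$ straight from Proposition~\ref{prop:exponential estimate}) and, for \emph{each} $x$ in that compact set, builds a single path of cost $\leq U_L(\zeta)+\gamma/2$ by concatenating the zero-cost noiseless flow $z^x_0$ on $[0,T_1]$ (which, uniformly over the compact set, lands $\lambda$-close to $m_L-\psi$) with the near-optimal path from $m_L-\psi$ to $\zeta$. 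The only qualitative fact about the flow this needs is the uniform-on-compacts convergence $\sup_x\|z^x_0(T_1)-(m_L-\psi)\|_E\to 0$, which you also use in your step (a).

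You instead insist on localizing near $m_L-\psi$ and then have to supply the nontrivial extra concentration statement $\liminf_{\epsilon\to 0}\bar\mu_\epsilon(B_E(m_L-\psi,\rho'))\geq\tfrac12$. Your sketch of that lemma — extracting uniform-in-$x$ convergence in probability of $\bar u_\epsilon^x$ to $z^x_0$ from the FWULDP upper bound by letting the level $r\searrow 0$ and using compactness of the level sets $K_T^x(r)$ (your step (b)) — is workable, but it is precisely the kind of auxiliary machinery that the paper's concatenated-path construction renders unnecessary. In other words, the paper trades your "simple control path from a hard-to-reach start set" for "a composite control path from an easy-to-reach start set." Both are correct; the paper's is shorter because it never needs to quantify where $\bar\mu_\epsilon$ actually puts its mass inside the compact set, only that the compact set carries mass of order one. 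One small caveat on both arguments: the uniform-on-compacts convergence of the noiseless flow is not literally an application of (\ref{z_phi^x-z_varphi^y}) alone (the Lipschitz constant there grows with $T$), and it implicitly invokes parabolic smoothing plus the monotonicity of $\mathbf E_L^*$ along the flow; you flag this as delicate and the paper states it tersely, so that is a shared gloss rather than a gap in yours.
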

\begin{proof}
    Fix $\zeta  \in E$, $\delta >0$, $\gamma >0$, and $T>0$. We assume that $U(\zeta) < \infty$, or otherwise the result would be trivial. We set $\left\{z^x\right\}_{x \in E} \subset C ([0,T];E)$ to be a family of paths satisfying 
    \begin{equation}\label{eqn:6.1}
        \sup_{x \in E}\left\| z^x(T) -\zeta \right\Vert _E <\delta /2.
        \end{equation}
   By Theorem \ref{thm:exponential-tightness}, there exists a constant $R_r$ being sufficiently large and independent of $\epsilon$ such that \begin{equation}\label{eqn:6.2}
       \bar{\mu} _\epsilon \left( \overline{B_{W^{k^\star,p^\star} }(0,R_r) }  \right) \geq \frac{1}{2}, \ \text{for all}\ \epsilon_0>\epsilon >0.
       \end{equation}

Moreover, according to Section \ref{sec:LDP Dynamics}, $\left\{\mathcal{L} (\bar{u}_\epsilon^x)\right\}_{\epsilon > 0}$  satisfies the Freidlin-Wentzell uniform large deviations principle in $C([0,T];E)$. So, for every $r_0,\gamma >0$ there exists $\epsilon _0>0$ such that $\gamma /2 + \epsilon_0 \ln \frac{1}{2} >0$ and for any $z^x \in C([0,T];E)$ with $I_T^x(z^x) \leq r_0$,
   \begin{equation}\label{eqn:6.3}
       \inf _{x \in B_E(0,R)}\mathbb{P} \left(\left\| \bar{u}_\epsilon ^x -z^x\right\Vert _{C([0,T];E)} < \delta /2 \right) \geq \inf _{x \in B_E(0,R)}\exp\left(-\frac{1}{\epsilon }\left[I_T^x(z^x) + \gamma /2 + \epsilon \ln \frac{1}{2}\right]\right),   \end{equation}
for every $\epsilon \leq \epsilon _0$. 
Since the family  $\left\{\bar{\mu} _\epsilon \right\}_{\epsilon >0}$ is the invariant measures of equation (\ref{ACE2}), from (\ref{eqn:6.1}), (\ref{eqn:6.2}) and (\ref{eqn:6.3}), we have 
	\begin{align*}
		\bar{\mu} _\epsilon \left(B_E(\zeta ,\delta )\right) &= \int_{E} \mathbb{P} \left(\| \bar{u}_\epsilon ^x(T) -\zeta \Vert _E <\delta \right) \,d\bar{\mu} _\epsilon (x ) \\ &\geq \int_{E} \mathbb{P} \left(\left\| \bar{u}_\epsilon ^x -z^x\right\Vert _{C([0,T];E)} < \delta /2 \right) \,d\bar{\mu} _\epsilon (x )\\ &\geq \int_{\overline{B_{W^{k^\star,p^\star} }(0,R) }  } \mathbb{P} \left(\left\| \bar{u}_\epsilon ^x -z^x\right\Vert _{C([0,T];E)} < \delta /2 \right) \,d\bar{\mu} _\epsilon (x )\\ &\geq \bar{\mu} _\epsilon \left(\overline{B_{W^{k^\star,p^\star} }(0,R) } \right) \inf _{x \in \overline{B_{W^{k^\star,p^\star} }(0,R) } }\mathbb{P} \left(\left\| \bar{u}_\epsilon ^x -z^x\right\Vert _{C([0,T];E)} < \delta /2 \right) \\ &\geq \exp\left(\frac{\epsilon \ln\frac{1}{2}}{\epsilon }\right)\inf _{x \in \overline{B_{W^{k^\star,p^\star} }(0,R) } }\exp\left(-\frac{1}{\epsilon }\left[I_T^x(z^x) + \gamma /2 + \epsilon \ln \frac{1}{2}\right]\right) \\ &= \inf _{x \in \overline{B_{W^{k^\star,p^\star} }(0,R) } }\exp\left(-\frac{1}{\epsilon }\left[I_T^x(z^x) + \gamma /2 \right]\right),
	\end{align*}
	for every $\epsilon \leq \epsilon _0$. Therefore, to complete the proof, it suffices to find a sufficiently large $T$ such that for each $x \in \overline {B_{W^{k^\star,p^\star} }(0,R)}  $, there exists a path $z^x \in C([0,T];E)$ with $z^x(0) =x$ that satisfies $$I_T(z^x) \leq U(\zeta) + \gamma /2$$ and $$\| z^x(T) - \zeta \Vert _E < \delta /2.$$ 
    For this, denote the solution of equation (\ref{Skeleton equation-2}) of the skeleton function by $z^x_f$. By the definition of $U(\zeta )$, there exist $T_2 > 0$, $z_{\bar{f}}^{m_L-\psi} \in C([0,T];E)$, and $\bar{f} \in L^2([0,T];H)$ such that $z^{m_L-\psi}_{\bar{f}}(0)=m_L-\psi$, $z^{m_L-\psi}_{\bar{f}}(T_2)=\zeta $, and $$\frac{1}{2}\int_{0}^{T_2} \| \bar{f}(t)\Vert_H^2  \,dt = I_{T_2}( z^{m_L-\psi}_{\bar{f}}) \leq U(\zeta ) + \gamma /2.$$ Meanwhile, by Lemma \ref{lemma-energy decreasing}, when $f =0$, we have that for any $x \in B_E(r)$, $$ \lim_{t \rightarrow +\infty}\| z^x_0(t)-(m_L-\psi)\Vert _{E} \leq C\lim_{t \rightarrow \infty}\| z^x_0(t)-(m_L-\psi)\Vert_{H^1} = 0.$$ Thus, by the compactness of $\overline{B_{W^{k^\star,p^\star} }(0,R) } $ in $E$ and (\ref{z_phi^x-z_varphi^y}), for any $\eta>0 $, we can take $T_1= T_1(\eta )$ large enough such that 
	$$ \sup_{x \in\overline{B_{W^{k^\star,p^\star} }(0,R) }  }\left\| z^x_0(T_1)-(m_L-\psi)\right\Vert _E \leq \eta.$$ 
	We choose the paths $z^x=z^x_f$ with  $f \in L^2([0,T];H)$ defined by
	$$f(t):=\begin{cases}
		0, &t \in [0 ,T_1), \\ 
		\bar{f} (t-T_1), &t \in [T_1,T_1+T_2],
	\end{cases}$$
	and $T= T_1 + T_2$. Clearly, we have \begin{align*}
		I_T(z^x) &= \frac{1}{2}\int_{0}^{T} \| f(t)\Vert_H^2  \,dt =\frac{1}{2}\int_{0}^{T_2} \| \bar{f}(t)\Vert_H^2  \,dt  \leq U(\zeta ) + \gamma /2.
	\end{align*}
	Moreover, from the above construction and (\ref{z_phi^x-z_varphi^y}) we have that
	\begin{align*}
		\sup_{x \in \overline{B_{W^{k^\star,p^\star} }(0,R) }}\left\| z^x_f(T) - \zeta \right\Vert _E &\leq \sup_{y \in B_E(m_L-\psi,\eta ) }\left\| z^y_{\bar{f}}(T_2) - z^{m_L-\psi}_{\bar{f}}(T_2)\right\Vert_E  \\ &\leq \sup_{y \in B_E(m_L-\psi,\eta )}c(T,\| y\Vert _E,U(\zeta))\cdot \| y-(m_L-\psi)\Vert_E  .
	\end{align*}
	Therefore, we can choose $\eta $ small enough such that $$\| z^x_f(T) -\zeta \Vert_E < \delta /2.$$ In conclusion, the path $z^x(t) = z^x_f(t)$ satisfies the two desired conditions. The proof is complete.

    \end{proof}
\subsection{The LDP upper bound}
To prove the upper bound for LDP, we need the following two auxiliary results.

\begin{lemma}\label{2 upper bound lemma 1}
	 Under Assumption \ref{assumption 3}, for any $\delta >0$ and $r>0$, there exist $\beta >0$ and $T>0$ such that for any $t^\star \geq T$ and $z \in C([-t^\star,0];E)$,
	$$\|z(-t^\star)-(m_L-\psi)\Vert_E <\beta , \ I_{t^\star}(z) \leq r \Rightarrow dist_E \left(z(0), \Phi_L (r)\right) <\delta,$$ where $$\Phi_L (r) := \left\{x  \in E: U_L(x )\leq r\right\}.$$
\end{lemma}
\begin{proof}
    Assume the claim does not hold. Then, there exist $\delta>0$, $r>0$ and a sequence of functions $z_n\in C([-T_n,0];E)$ with $T_n \nearrow +\infty$ such that $\beta_n := \|z_n(-T_n)-(m_L-\psi)\Vert_E\searrow 0$ and
\begin{equation}\label{2 ineq 1 upper lemma 1}
    I_{T_n}(z_n) \leq r \ , dist_E(z_n(0), \Phi_L(r)) \geq \delta.
\end{equation}

Thus, $z_n$ also satisfies equation (\ref{Skeleton equation-2}) with $z_n(-T_n)=x$ and $f$ such that $\|f \Vert_{L^2([-T_n,0];H)}^2= 2I_{T_n}(z_n) \leq 2r$. 

The following arguments are similar to the proof of Proposition \ref{prop:compactness of K}. By Lemma \ref{lemma:z-energy}, for any $k \in [0,T_n-1]$ and $k \in \mathbb{N}$, the restriction of $z_n$ to the interval $[-k,0]$ belongs to $K_{\Lambda,-k,0}$, where 
$$\Lambda:=\{\bar{u} \in E; \mathbf{E}^*_L(\bar{u}) \leq C_r \},$$ and $C_r$ is a constant depending on $r$.
As the level sets of $\mathbf{E}^*_L(\cdot)$ are compact in $E$, we find that $K_{\Lambda,-k,0}$ is compact in $C([-k,0];E)$. Taking $k =1$, we can find $\{z_{n_1}\} \subseteq \{z_n\}$ and $\hat{z}_1 \in C([-1,0];E)$ such that $z_{n_1|_{[-1,0]}}$ converges to $\hat{z}_1$ in $ C([-1,0];E) $. With the same arguments, we can find a subsequence $\{z_{n_2}\} \subset \{z_{n_1}\}$ and $\hat{z}_2 \in C([-2,0];E)$ such that $z_{n_2|_{[-2,0]}}$ converges to $\hat{z}_2$ in $ C([-2,0];E) $. Continuing this procedure, we can find a subsequence $\{z_{n^\prime}\}\subset\{z_n\}$ converging to some $\hat{z}$ in $C((-\infty,0];E)$. Thanks to Theorem 5.1 of \cite{cerrai2004large}, $I_{-k}(\cdot)$ is lower semi-continuous, so we have $I_{-k}(\hat{z}) \leq r$ for any $k \geq 0$ which implies $I_{-\infty}(\hat{z}) \leq r$. By the same argument as in the proof of Proposition \ref{prop:compactness of K},
we obtain
\[
\lim_{t\to+\infty}
\|\hat z(-t)-(m_L-\psi)\|_E=0.
\] 
Therefore, $\hat{z} \in K_{-\infty}(r)$. Thus, by the characterisation of $U_L$ (see Proposition \ref{Prop:characterization of U_L}), $U_L(\hat{z}(0)) \leq r$ and $\hat{z}(0) \in \Phi_L(r)$. However, by our construction, $$ \lim_{n_1 \rightarrow \infty }\|z_{n_1}(0)-\hat{z}(0)\Vert_E=0, $$ which contradicts (\ref{2 ineq 1 upper lemma 1}). The proof of the desired result is complete.    
           
\end{proof}

\begin{lemma}\label{2 upper bound lemma 2}
    For any $r >0$, $\delta >0$ and $\eta>0$, let $\beta(r,\delta) $ be as in Lemma \ref{2 upper bound lemma 1}. Then there exists $\bar{N} \in \mathbb{N} $ large enough such that 
	$$u \in H_{\eta,r,\delta }(\bar{N})\  \text{implies}\  I_{\bar{N}}(u) \geq r,$$
	where the set $H_{\eta,r,\delta }(\bar{N})$ is defined for $\bar{N} \in \mathbb{N}$ by
	$$H_{\eta,r,\delta }(\bar{N}):= \left\{u \in C([0,\bar{N}];E), \| u(0)\Vert_E \leq \eta, \| u(j)-(m_L-\psi)\Vert_E \geq \beta , j =1,...,\bar{N}\right\}.$$
\end{lemma}
\begin{proof}
    Assume there exist $r >0$, $\delta >0$, and $\eta>0$ such that for any $n \in \mathbb{N}$ there exists $u \in H_{\eta,r,\delta }(n)$ with \begin{equation}\label{ineqn: contradiction}
        I_n(u) \leq r.
        \end{equation}
     Then, by Proposition \ref{prop:energy upper bound} and discussions in Lemma \ref{lemma:z-energy},  $\mathbf{E}^*_L( u(n)) \leq C_{r, \eta}$, for all $n \in \mathbb{N}$. Here, $C_{r,\eta}$ is a positive constant depending on $r$ and $\eta$.
    Now, for any $k \in \mathbb{N}$ we define 
    $$r_k:=\inf\left\{I_k(u):u \in C([0,k];E),\mathbf{E}^*_L( u(0)) \leq C_{r, \eta}, \| u(k)-(m_L-\psi)\Vert_E \geq \beta \right\}.$$ Notice that if $u \in H_{\eta,r,\delta }(n) $, then for any $k \in \mathbb{N}$, $k \leq n $, $\mathbf{E}^*_L( u(k)) \leq C_{r, \eta}$ and $\| u(k)-(m_L-\psi)\Vert_E \geq \beta $. Therefore, if we can show that there exists $\bar{k} \in \mathbb{N}$ such that $r_{\bar{k}}>0$, then for any $u \in C([0,n\bar{k}];E)$
    $$I_{n\bar{k}}(u) \geq nr_{\bar{k}},$$ which contradicts (\ref{ineqn: contradiction}). The remainder of the proof proceeds similarly to the proof of Lemma \ref{lemma : energy increment}.
    
    To finish our proof, it remains to show that there exists $\bar{k} \in \mathbb{N}$ such that $r_{\bar{k}}>0$. In fact, if $r_{\bar{k}}=0 $ for all $\bar{k}$, then there exists a sequence $$ \{u_i \in C([0,\bar{k}];E), \mathbf{E}^*_L( u_i(0)) \leq C_{r, \eta}, \| u_i(\bar{k})-(m_L-\psi)\Vert_E \geq \beta \}, i \in \mathbb{N},$$ such that \begin{equation}
        \lim_{i \rightarrow \infty}I_{\bar{k}}(u_i)=0.   
        \end{equation}
     By the same arguments as in the proof of Lemma \ref{lemma : energy increment}, there exist $x \in E$ and a subsequence still denoted by $u_i(0)$ such that $$\lim_{i \rightarrow \infty}\|u_i(0)-x\Vert_H  =0.$$     
     for any $\eta>0$, there exist $\bar{k}$ sufficiently large, $N \in \mathbb{N}$ and some $t \in[\frac{\bar{k}}{2}, \bar{k}] $ such that for any $ i \in \mathbb{N} $, $i \geq N$
   $$\mathbf{E}^*_L(u_i(t)) \leq \frac{\eta}{2}. $$ 
  Furthermore, by (\ref{Energy inequality}) and (\ref{u_i bound}), \begin{align*}
\mathbf{E}^*_L(u_i(\bar{k}))&\leq \mathbf{E}^*_L(u_i(t)) + C(1+\sup_{s \geq 0, i \in \mathbb{N}}\|u_i(s)\Vert_E^2)\cdot I_{\bar{k}}(u_i) \\ &\leq \mathbf{E}^*_L(u_i(t))+C I_{\bar{k}}(u_i).  \end{align*} Since $\lim_{i \rightarrow \infty}I_{\bar{k}}(u_i)=0 $, for any $\eta >0$ there exists $N$ such that for all $i \geq N$, $CI_{\bar{k}}(u_i) \leq \frac{\eta}{2}$.
 Therefore, one obtains that, for $i$ sufficiently large, 
       $$ \mathbf{E}^*_L(u_i(\bar{k})) \leq \eta. $$ Moreover, by Proposition \ref{prop: small energy}, for any $\beta >0$, one can choose $\eta$ sufficiently small such that $$\|u_i(\bar{k})-(m_L-\psi)\Vert_E \leq \frac{\beta}{2} $$which contradicts the assumption that $ \|u_i(\bar{k})-(m_L-\psi)\Vert_E  \geq \beta $. The proof is complete.
\end{proof}

\begin{theorem}\label{prop:upper bound}
	 Under the Assumption \ref{assumption 3} and the assumption of Proposition \ref{prop:exponential estimate}, the family of invariant measures $\left\{\bar{\mu} _\epsilon \right\}_{\epsilon >0}$ satisfies a large deviation principle upper bound in $E$ with the rate function $U_L$. That is, for any $r \geq 0$ and $\delta , \gamma  >0$, there exists $\epsilon _0>0$ such that 
$$\bar{\mu} _\epsilon \left(\left\{h \in E: dist_E(h,\Phi_L (r)) \geq\delta \right\}\right) \leq \exp\left(-\frac{r-\gamma }{\epsilon }\right), \ \epsilon < \epsilon _0.$$
\end{theorem}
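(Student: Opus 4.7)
The plan is to exploit the invariance of $\bar{\mu}_\epsilon$ to translate the static upper bound into a dynamical estimate over a long time window, confine initial data to a compact set via Proposition \ref{prop:exponential estimate}, and then combine the two preceding lemmas with the DZULDP of Corollary \ref{corollary DZ}.

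First I would invoke Proposition \ref{prop:exponential estimate} with level $r$ to choose $R_r$ and set $K := \overline{B_{W^{k^\star,p^\star}}(0,R_r)}$, which is compact in $E$ by Sobolev embedding and satisfies $\bar{\mu}_\epsilon(K^c) \leq \exp(-r/\epsilon)$ for all small $\epsilon$. Pick $\rho > 0$ so that $K \subset B_E(0,\rho)$. Applying Lemma \ref{upper bound lemma 1} with the pair $(r - \gamma/3,\delta)$ yields constants $\beta, T > 0$, and applying Lemma \ref{upper bound lemma 2} with the triple $(\rho, r - \gamma/3,\delta)$ yields $\bar{N} \in \mathbb{N}$. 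Set $N := \bar{N} + T$ and introduce the closed set
\[
F := \{u \in C([0,N];E) : dist_E(u(N),\Phi_L(r)) \geq \delta\}.
\]

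The heart of the argument is the claim that for every $y \in K$ and every $u \in F$ with $u(0) = y$ one has $I_N^y(u) \geq r - \gamma/3$. I would verify it by a dichotomy on whether the path visits the $\beta$-neighbourhood of $m_L - \psi$ at some integer time in $[1, \bar{N}]$. If $\|u(j) - (m_L-\psi)\|_E \geq \beta$ for every $j = 1,\ldots,\bar{N}$, then $u|_{[0,\bar{N}]} \in H_{\rho, r-\gamma/3,\delta}(\bar{N})$, so Lemma \ref{upper bound lemma 2} forces $I_{\bar{N}}^y(u|_{[0,\bar{N}]}) \geq r-\gamma/3$; since restricting the optimising control to a subinterval only decreases its $L^2$-norm, the same bound holds for $I_N^y(u)$. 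Otherwise, pick $j_0 \in \{1,\ldots,\bar{N}\}$ with $\|u(j_0) - (m_L-\psi)\|_E < \beta$; the segment $u|_{[j_0,N]}$ has length $N - j_0 \geq T$, and the monotonicity $\Phi_L(r-\gamma/3) \subset \Phi_L(r)$ gives $dist_E(u(N),\Phi_L(r-\gamma/3)) \geq \delta$, so the contrapositive of Lemma \ref{upper bound lemma 1} forces $I_{N-j_0}^{u(j_0)}(u|_{[j_0,N]}) > r - \gamma/3$ and the same restriction argument yields $I_N^y(u) \geq r-\gamma/3$.

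With the claim in hand, Corollary \ref{corollary DZ} applied to the compact set $D = K$ with parameter $\gamma/3$ gives $\sup_{x \in K}\mathbb{P}(\bar{u}_\epsilon^x \in F) \leq \exp(-(r-2\gamma/3)/\epsilon)$. Combining this with the exponential tightness bound through the invariance identity
\[
\bar{\mu}_\epsilon(A) = \int_E \mathbb{P}(\bar{u}_\epsilon^x(N) \in A)\,d\bar{\mu}_\epsilon(x), \qquad A := \{h : dist_E(h,\Phi_L(r)) \geq \delta\},
\]
yields $\bar{\mu}_\epsilon(A) \leq \bar{\mu}_\epsilon(K^c) + \sup_{x \in K}\mathbb{P}(\bar{u}_\epsilon^x \in F) \leq 2\exp(-(r-2\gamma/3)/\epsilon) \leq \exp(-(r-\gamma)/\epsilon)$ for all sufficiently small $\epsilon$. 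The main obstacle, and where I expect the delicate book-keeping to lie, is coordinating the parameters $\beta$, $T$, $\bar{N}$, $N$ and the three $\gamma$-splits simultaneously, so that wherever $u$ first approaches $m_L - \psi$ inside $[1,\bar{N}]$, the remaining interval still carries length at least $T$ to trigger Lemma \ref{upper bound lemma 1}, while $[0,\bar{N}]$ retains enough integer times to trigger Lemma \ref{upper bound lemma 2}.
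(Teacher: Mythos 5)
Your proof is correct, and it takes a genuinely different route from the paper. The paper decomposes $\bar{\mu}_\epsilon(\{dist_E(h,\Phi_L(r))\geq\delta\})$ into three probabilistic terms $K_1,K_2,K_3$: $K_1$ is outside the compact set and handled by Proposition~\ref{prop:exponential estimate}; $K_2$ is the event that the path lies in $H_{R_r,r,\delta}(\bar N)$, handled via Corollary~\ref{corollary DZ} and Lemma~\ref{upper bound lemma 2}; and $K_3$ is the complementary event that the path visits the $\beta$-ball around $m_L-\psi$ at some integer time, which the paper controls by conditioning via the Markov property and then invoking the FWULDP upper bound \eqref{LDP-Dynamics-Upper} together with Lemma~\ref{upper bound lemma 1}. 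You instead fold the case analysis into a single deterministic estimate on the rate function: for $y\in K$ and any path $u$ from $y$ into $F=\{u : dist_E(u(N),\Phi_L(r))\geq\delta\}$ you show $I_N^y(u)\geq r-\gamma/3$ by the dichotomy on whether $u(j)$ enters the $\beta$-ball for some $j\leq\bar N$; either Lemma~\ref{upper bound lemma 2} applies on $[0,\bar N]$, or the contrapositive of Lemma~\ref{upper bound lemma 1} applies on $[j_0,N]$ (with $N=\bar N+T$ guaranteeing the remaining window has length at least $T$), and in both cases the restriction inequality $I_{t_1,t_2}\leq I_N^y$ finishes the bound. You then apply the DZULDP upper bound once. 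This buys two things: you avoid the explicit Markov-conditioning step entirely, and you need only the Dembo--Zeitouni formulation of the ULDP (no separate invocation of the Freidlin--Wentzell form for $K_3$). The book-keeping of $\gamma$-splits and of $N=\bar N+T$ that you flag as the delicate point is indeed handled correctly: taking $j_0\leq\bar N$ leaves $N-j_0\geq T$, and the inclusion $\Phi_L(r-\gamma/3)\subset\Phi_L(r)$ lets you apply Lemma~\ref{upper bound lemma 1} at level $r-\gamma/3$ consistently with Lemma~\ref{upper bound lemma 2}. Both arguments need the implicit monotonicity of the rate functional under restriction to a subinterval, which you state explicitly and which the paper uses without comment.
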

\begin{proof}
	For any fixed $r>0$, $\delta >0$ and $\gamma >0$, let $R_r$, $\epsilon _r$ $k^\star$ and $p^\star$ be as in Proposition \ref{prop:exponential estimate}. Since $\bar{\mu} _\epsilon $ is the invariant measure for equation (\ref{ACE2}), for $t \geq  \bar{N}$ we have 
	\begin{align*}
		&\ \ \ \ \ \bar{\mu} _\epsilon \left(\left\{h \in E: dist_E(h,\Phi_L (r)) \geq \delta \right\}\right) \\&= \int_{E}\mathbb{P} \left(dist_E(\bar{u}_\epsilon ^y(t),\Phi_L (r)) \geq \delta \right)  \,d\bar{\mu} _\epsilon (y) \\ &= \int_{B_{W^{k^\star,p^\star}}^c(0,R_r)}\mathbb{P} \left(dist_E(\bar{u}_\epsilon ^y(t),\Phi_L (r)) \geq \delta \right)  \,d\bar{\mu} _\epsilon (y) \\ &\ \ + \int_{\overline{B_{W^{k^\star,p^\star}}(0,R_r) }}\mathbb{P} \left(dist_E(\bar{u}_\epsilon ^y(t),\Phi_L (r)) \geq \delta, u_\epsilon ^y \in H_{CR_r,r,\delta }(\bar{N}) \right)  \,d\bar{\mu} _\epsilon (y) \\  &\ \ + \int_{\overline{B_{W^{k^\star,p^\star}}(0,R_r) }}\mathbb{P} \left(dist_E(\bar{u}_\epsilon ^y(t),\Phi_L (r)) \geq \delta, \bar{u}_\epsilon ^y \notin H_{CR_r,r,\delta }(\bar{N}) \right)  \,d\bar{\mu} _\epsilon (y) \\ &=: K_1 +K_2 + K_3,
	\end{align*}
	where $\bar{N}$ is chosen as in Lemma \ref{2 upper bound lemma 2} such that $$u \in H_{CR_r,r,\delta }(\bar{N}) \ \text{implies} \ I_{\bar{N}}(u) \geq r.$$
	Now, thanks to Theorem \ref{thm:exponential-tightness}, we have for $ 0<\epsilon \leq \epsilon _r $,
	$$K_1 \leq \bar{\mu} _\epsilon \left(B^c_{ W^{k^\star,p^\star} }(0,R_r)\right) \leq \exp(-\frac{r}{\epsilon }).$$ To estimate $K_2$, notice that $\overline{B_{W^{k^\star,p^\star} }(0,R_r) }$ is a compact subset of E, and $H_{CR_r,r,\delta }(N)$ is a closed set in $C([0,N];E)$. Then, by Corollary \ref{corollary DZ2}, $\left\{\mathcal{L} (\bar{u}_\epsilon^x)\right\}_{\epsilon > 0}$ satisfies a Dembo-Zeitouni uniform large deviation principle over compact sets. We infer that there exists $\epsilon _1 >0$ such that \begin{align*}
		K_2 &\leq \sup_{y \in \overline{B_{W^{k^\star,p^\star} }(0,R_r) }  }\mathbb{P} \left( \bar{u}_\epsilon ^y \in H_{CR_r,r,\delta }(\bar{N}) \right) \\ &\leq \exp\left(-\frac{1}{\epsilon }\left[\inf_{z \in \overline{B_{W^{k^\star,p^\star} }(0,R_r) }  }\inf_{h \in H_{CR_r,r,\delta }(\bar{N})}I_N^z(h) - \gamma \right]\right)\\ &\leq \exp\left(-\frac{r-\gamma }{\epsilon }\right),
	\end{align*}
	for any $\epsilon \leq \epsilon _1$. Here, the third inequality is due to Lemma \ref{2 upper bound lemma 2}. 

	Finally, let us deal with the third term $K_3$. By the definition of $H_{CR_r,r,\delta }(N)$ and the Markov property of $u_\epsilon $, we have 
	\begin{align*}
		K_3 &\leq \int_{ B_{W^{k^\star,p^\star} }(0,\bar{R}_r) }\mathbb{P} \left(\bigcup _{j=1}^{\bar{N}}\left\{\| \bar{u}_\epsilon ^y(j)-(m_L-\psi)\Vert _E < \beta \right\}\bigcap \left\{dist_E(\bar{u}_\epsilon^y(t), \Phi_L (r)) \geq \delta \right\}\right) \,d\bar{\mu} _\epsilon (y) \\ &\leq \sum_{j =1}^N \int_{ B_{W^{k^\star,p^\star} }(0,\bar{R}_r) }\mathbb{P} \left(\left\{\| \bar{u}_\epsilon ^y(j)-(m_L-\psi)\Vert _E < \beta \right\}\bigcap \left\{dist_E(\bar{u}_\epsilon^y(t), \Phi_L (r)) \geq \delta \right\}\right) \,d\bar{\mu} _\epsilon (y) \\ &\leq \sum_{j=1}^N \sup_{y \in B_E(m_L-\psi,\beta )}\mathbb{P} \left(dist_E(\bar{u}_\epsilon ^y(t-j),\Phi_L (r)) \geq \delta \right).
	\end{align*}

	In order to use the ULDP, we need to transfer the event at $t-j$ to an event in $C([0,t-j];E)$. To achieve that, we choose $t \geq T + N$, where $T$ was given in Lemma \ref{2 upper bound lemma 1}. We then have that for $y \in B_E(m_L-\psi,\beta )$, if
	$$dist_{C([0,t-j];E)}\left(\bar{u}_\epsilon^y, K ^y(r)\right) < \frac{\delta }{2},$$ then $$ \inf\left\{\| \bar{u}_\epsilon^y-v\Vert _{C([0,t-j];E)}: v \in C([0,t-j];E),\  \| v(0)-(m_L-\psi)\Vert _E <\beta , \ I_{t-j}(v) \leq r \right\} < \frac{\delta }{2},$$ where $$K ^y(r) := \left\{v \in C([0,t-j];E): v(0) =y, I_{t-j}(v) \leq r\right\}.$$ It turns out by applying Lemma \ref{2 upper bound lemma 1} for $\frac{\delta}{2}$ that $$ dist_E(\bar{u}_\epsilon^y(t-j), \Phi_L (r)) <\delta.$$ 
Thus, according to the FWULDP results, there exists $\epsilon _{0,j}>0$ such that for any $\epsilon \leq \epsilon _{0,j}$,
\begin{align*}
	\sup_{y \in B_E(m_L-\psi,\beta )}\mathbb{P} \left(dist_E(\bar{u}_\epsilon ^y(t-j),\Phi_L (r)) \geq \delta \right) & \leq \sup_{y \in B_E(m_L-\psi,\beta )}\mathbb{P} \left(dist_{C([0,t-j];E)}\left(\bar{u}_\epsilon^y, K ^y(r)\right) \geq \frac{\delta }{2}\right) \\ &\leq \exp\left(-\frac{r-\gamma }{\epsilon }\right).
\end{align*}
Therefore, we choose $\epsilon_0= \min(\epsilon _r,\epsilon _1,\epsilon _{0,1},...,\epsilon _{0,N})>0$, it follows that for any $\epsilon \leq \epsilon _0$
$$K_3 \leq \bar{N} \exp\left(-\frac{r-\gamma }{\epsilon }\right),$$
Combining the estimates of $K_1$, $K_2$, $K_3$, we complete the proof of this proposition.
\end{proof}

\begin{theorem}
Under Assumption \ref{assumption 3} and the growth assumption of
Proposition \ref{prop:exponential estimate}, the family of invariant measures
$\{\bar\mu_\epsilon\}_{\epsilon>0}$ satisfies an LDP in $E$
with a good rate function $U_L$.
\end{theorem}    

With the upper bound of LDP for $\bar{\mu}_\epsilon$, it is easy to see the following result.
\begin{corollary}
   Under Assumption \ref{assumption 3} and the assumption of Proposition \ref{prop:exponential estimate}, there exist constants $C>0$ and $\delta_0>0$ such that for every $0<\delta<\delta_0$ there exists $\epsilon_0 >0$ such that
    $$\mu_\epsilon(\{h \in \mathcal{E}:\|h-m_L\Vert_E \geq \delta \}) \leq \exp\left(-\frac{C\delta^2}{\epsilon} \right), \epsilon <\epsilon_0.$$ In particular, the measures $\mu_\epsilon$ concentrate around the  minimiser $m_L$ exponentially fast when $\epsilon$ is sufficiently small.
    \end{corollary}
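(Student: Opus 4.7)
The plan is to reduce the concentration estimate for $\mu_\epsilon$ to the LDP upper bound established in Theorem~\ref{prop:upper bound}, and then to exploit the local quadratic behavior of $U_L$ near the equilibrium $m_L - \psi$ coming from the control of $\mathbf{E}_L^\star$ in Propositions~\ref{prop: small energy} and~\ref{prop:energy upper bound}.

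First, by the definition $\mu_\epsilon(B+\psi) = \bar{\mu}_\epsilon(B)$, the set $\{h \in \mathcal{E}: \|h - m_L\|_E \geq \delta\}$ corresponds under the translation $h \mapsto h - \psi$ to $\{\bar{h} \in E: \|\bar{h} - (m_L - \psi)\|_E \geq \delta\}$, so it suffices to bound $\bar{\mu}_\epsilon$ on the latter. Second, I would show that the level sets of $U_L$ shrink quadratically to $\{m_L - \psi\}$: if $\zeta \in \Phi_L(r):=\{U_L \leq r\}$, then by Proposition~\ref{prop:energy upper bound} we have $\mathbf{E}_L^\star(\zeta) \leq C r$, and provided $C r \leq \eta$ (with $\eta$ the constant in Proposition~\ref{prop: small energy}), we conclude
$$\|\zeta - (m_L-\psi)\|_E \leq C'\sqrt{r}.$$
Equivalently, $\Phi_L(r) \subset B_E(m_L - \psi, C'\sqrt{r})$ whenever $r$ is small enough. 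Choosing $r := \delta^2/(4C'^2)$ yields $\Phi_L(r) \subset B_E(m_L-\psi, \delta/2)$, hence
$$\{\bar{h} \in E: \|\bar{h} - (m_L-\psi)\|_E \geq \delta\} \subset \{\bar{h} \in E: \mathrm{dist}_E(\bar{h}, \Phi_L(r)) \geq \delta/2\}.$$

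Finally, applying Theorem~\ref{prop:upper bound} with the radius $\delta/2$, the rate $r = \delta^2/(4C'^2)$, and $\gamma = r/2$, there exists $\epsilon_0>0$ such that for all $\epsilon<\epsilon_0$,
$$\bar{\mu}_\epsilon\bigl(\{\bar{h}: \mathrm{dist}_E(\bar{h},\Phi_L(r)) \geq \delta/2\}\bigr) \leq \exp\!\left(-\frac{r/2}{\epsilon}\right) = \exp\!\left(-\frac{\delta^2}{8 C'^2\,\epsilon}\right),$$
which is the desired estimate with $C = 1/(8 C'^2)$. The threshold $\delta_0$ is determined by the requirement $C r \leq \eta$, i.e.\ $\delta \leq 2C'\sqrt{\eta/C}$, so Proposition~\ref{prop: small energy} is applicable. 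I do not anticipate a serious obstacle in this argument; it is essentially a mechanical corollary of the LDP upper bound combined with the quadratic lower bound on $U_L$ at the minimizer. The only mildly delicate point is bookkeeping the chain of constants and confirming that $\delta_0$ is indeed small enough that every invocation of the preceding propositions remains in its regime of validity.
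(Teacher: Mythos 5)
Your proof is correct and fills in precisely the one-line argument the paper gives: translate the set by $\psi$, show the level sets $\Phi_L(r)$ shrink quadratically to $\{m_L-\psi\}$, and then invoke the LDP upper bound of Theorem~\ref{prop:upper bound}. One small discrepancy worth flagging: the paper's proof cites Proposition~\ref{prop:energy lower bound} (which gives $g_0\,U_L \leq 2\mathbf{E}_L^*$), but the inequality actually needed to deduce $\Phi_L(r)\subset B_E(m_L-\psi, C'\sqrt{r})$ is the opposite direction $\mathbf{E}_L^*(\zeta)\leq C\,U_L(\zeta)$ from Proposition~\ref{prop:energy upper bound}, which is what you (correctly) use; the paper's citation appears to be a slip.
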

    \begin{proof}
        This is a direct consequence of Propositions \ref{prop: small energy} and \ref{prop:energy upper bound}, and the LDP upper bound for $\bar{\mu}_\epsilon$ given in Proposition \ref{prop:upper bound}.
    \end{proof}

  \appendix
\renewcommand\thesection{\normalsize Acknowledgements}
\section{}
We acknowledge the financial supports from an EPSRC grant (ref EP/S005293/2), and a Royal Society Newton Fund grant (ref. NIF\textbackslash R1\textbackslash 221003).

\addtolength{\itemsep}{-1.5 em} 
\setlength{\itemsep}{-3pt}
\footnotesize

\phantomsection
	\addcontentsline{toc}{section}{References}
    
\bibliographystyle{siam}
\footnotesize
\bibliography{Ref}

@article{cerrai2003stochastic,
  title={Stochastic reaction-diffusion systems with multiplicative noise and non-\text{L}ipschitz reaction term},
  author={Cerrai, Sandra},
  journal={Probability Theory and Related Fields},
  volume={125},
  number={2},
  pages={271--304},
  year={2003},
  publisher={Springer}
}

@article{cerrai2004large,
  title={Large deviations for stochastic reaction-diffusion systems with multiplicative noise and non-\text{L}ipshitz reaction term},
  author={Cerrai, Sandra and R{\"o}ckner, Michael},
  journal={The Annals of Probability},
  volume={32},
  number={1B},
  pages={1100--1139},
  year={2004},
  publisher={Institute of Mathematical Statistics}
}

@article{salins2019equivalences,
  title={Equivalences and counterexamples between several definitions of the uniform large deviations principle},
  author={Salins, Michael},
  journal={Probability Surveys},
  volume={16},
  pages={99--142},
  year={2019}
}

@article{jendoubi1998simple,
  title={A simple unified approach to some convergence theorems of \text{L}. \text{S}imon},
  author={Jendoubi, Mohamed Ali},
  journal={Journal of Functional Analysis},
  volume={153},
  number={1},
  pages={187--202},
  year={1998},
  publisher={Elsevier}
}

@article{bertini2008dobrushin,
  title={Dobrushin States in the $\phi^4_1$ Model},
  author={Bertini, Lorenzo and Brassesco, Stella and Butt{\`a}, Paolo},
  journal={Archive for Rational Mechanics and Analysis},
  volume={190},
  number={3},
  pages={477--516},
  year={2008}
}

@article{BAI2026111284,
title = {Large deviation principle for invariant measures of stochastic \text{B}urgers equations},
author={Bai, Rui and Feng, Chunrong and Zhao, Huaizhong},
journal = {Journal of Functional Analysis},
volume = {290},
number = {5},
pages= {Paper No. 111284, 66},
year = {2026}
}

@article{cerrai2005large,
  title={Large deviations for invariant measures of stochastic reaction-diffusion systems with multiplicative noise and non-\text{L}ipschitz reaction term},
  author={Cerrai, Sandra and R{\"o}ckner, Michael},
  journal={Annales de l'IHP Probabilit{\'e}s et statistiques},
  volume={41},
  number={1},
  pages={69--105},
  year={2005}
}

@article{maslowski2000probabilistic,
  title={Probabilistic approach to the strong \text{F}eller property},
  author={Maslowski, Bohdan and Seidler, Jan},
  journal={Probability Theory and Related Fields},
  volume={118},
  number={2},
  pages={187--210},
  year={2000},
  publisher={Springer}
}

@book{da1996ergodicity,
  title={Ergodicity for infinite dimensional systems},
  author={Da Prato, Giuseppe and Zabczyk, Jerzy},
  volume={229},
  year={1996},
  publisher={Cambridge university press}
}

@article{goldys200512,
  title={Exponential Ergodicity for Stochastic Reaction-Diffusion Equations},
  author={Goldys, Beniamin},
  journal={Stochastic Partial Differential Equations and Applications-VII},
  pages={115--129},
  year={2005},
  publisher={CRC Press}
}

@article{brzezniak2017large,
  title={Large deviations principle for the invariant measures of the 2D stochastic \text{N}avier--\text{S}tokes equations on a torus},
  author={Brze{\'z}niak, Z and Cerrai, Sandra },
  journal={Journal of Functional Analysis},
  volume={273},
  number={6},
  pages={1891--1930},
  year={2017},
  publisher={Elsevier}
}

@article{wang2024large,
  title={Large Deviations of Invariant Measures of Stochastic Reaction--Diffusion Equations on Unbounded Domains},
  author={Wang, Bixiang},
  journal={Journal of Statistical Physics},
  volume={191},
  number={8},
  pages={Paper No. 96, 28 },
  year={2024},
  publisher={Springer}
}

@article{reznikoff2005invariant,
  title={Invariant measures of stochastic partial differential equations and conditioned diffusions},
  author={Reznikoff, Maria G and Vanden-Eijnden, Eric},
  journal={Comptes Rendus. Math{\'e}matique},
  volume={340},
  number={4},
  pages={305--308},
  year={2005}
}

@article{weber2010sharp,
  title={Sharp interface limit for invariant measures of a stochastic \text{A}llen-\text{C}ahn equation},
  author={Weber, Hendrik},
  journal={Communications on Pure and Applied Mathematics: A Journal Issued by the Courant Institute of Mathematical Sciences},
  volume={63},
  number={8},
  pages={1071--1109},
  year={2010},
  publisher={Wiley Online Library}
}

@article{cerrai2022large,
  title={Large deviations principle for the invariant measures of the \text{2D} stochastic \text{N}avier--\text{S}tokes equations with vanishing noise correlation},
  author={Cerrai, Sandra and Paskal, Nicholas},
  journal={Stochastics and Partial Differential Equations: Analysis and Computations},
  volume={10},
  number={4},
  pages={1651--1681},
  year={2022},
  publisher={Springer}
}

@article{otto2014invariant,
  title={Invariant measure of the stochastic \text{A}llen-\text{C}ahn equation: the regime of small noise and large system size},
  author={Otto, Felix and Weber, Hendrik and Westdickenberg, Maria G},
  journal={Electron. J. Probab},
  volume={19},
  number={23},
  pages={1--76},
  year={2014}
}

@article{bertini2025asymptotics,
  title={Asymptotics of the $\phi^4_1$ Measure in the Sharp Interface Limit},
  author={Bertini, Lorenzo and Butt{\`a}, Paolo and Di Ges{\`u}, Giacomo},
  journal={Archive for Rational Mechanics and Analysis},
  volume={249},
  number={5},
  pages={60},
  year={2025}
}

@article{sowers1992large,
  title={Large deviations for the invariant measure of a reaction-diffusion equation with non-\text{G}aussian perturbations},
  author={Sowers, Richard},
  journal={Probability Theory and Related Fields},
  volume={92},
  pages={393--421},
  year={1992},
  publisher={Springer}
}

@book{hollander2000large,
  title={Large deviations},
  author={Hollander, Frank},
  volume={14},
  year={2000},
  publisher={American Mathematical Soc.}
}

@article{budhiraja2008large,
  title={Large deviations for infinite dimensional stochastic dynamical systems},
  author={Budhiraja, Amarjit and Dupuis, Paul and Maroulas, Vasileios},
  journal={Annals of Probability},
  volume={36},
  number={4},
  pages={1390--1420},
  year={2008}
}

@article{budhiraja2000variational,
  title={A variational representation for positive functionals of infinite dimensional Brownian motion},
  author={Budhiraja, Amarjit and Dupuis, Paul},
  journal={Probability and mathematical statistics-Wroclaw University},
  volume={20},
  number={1},
  pages={39--61},
  year={2000},
  publisher={WROCLAW UNIVERSITY PRESS}
}
\end{document}